\newtheorem{theorem}{Theorem}
\numberwithin{theorem}{section}
\newtheorem{lemma}[theorem]{Lemma}
\newtheorem{proposition}[theorem]{Proposition}
\theoremstyle{definition}
\newtheorem{definition}[theorem]{Definition}
\newtheorem{remark}[theorem]{Remark}
\newtheorem{example}[theorem]{Example}
\newtheorem*{claim}{Claim}
\tikzset{symbol/.style={draw=none,every to/.append style={edge node={node [sloped, allow upside down, auto=false]{$#1$}}}}}
\newcommand{\rca}{\mathbf{RCA}}
\newcommand{\isigma}{\mathbf{I\Sigma}^0}
\newcommand{\lo}{\mathsf{LO}}
\newcommand{\nat}{\mathsf{Nat}}
\newcommand{\supp}{\operatorname{supp}}
\newcommand{\rng}{\operatorname{rng}}
\newcommand{\emp}{\operatorname{emp}}
\newcommand{\nf}{=_{\operatorname{NF}}}
\title{Ackermann and Goodstein go functorial}
\author[Aguilera, Freund, Rathjen, Weiermann]{{Juan P.}~Aguilera, Anton~Freund, Michael~Rathjen and~Andreas~Weiermann}
\begin{document}

\begin{abstract}
We present variants of Goodstein's theorem that are equivalent to arithmetical comprehension and to arithmetical transfinite recursion, respectively, over a weak base theory. These variants differ from the usual Goodstein theorem in that they (necessarily) entail the existence of complex infinite objects. As part of our proof, we show that the Veblen hierarchy of normal functions on the ordinals is closely related to an extension of the Ackermann function by direct limits.
\end{abstract}

\keywords{Ackermann function, Goodstein's theorem, Number representation, Reverse mathematics, Veblen hierarchy, Well-ordering principles}
\subjclass[2010]{03B30, 03F15, 03F40, 11A67}

\maketitle

\section{Introduction}

G\"odel's incompleteness theorem tells us that no reasonable framework for the foundation of mathematics can allow us to prove all true statements about the natural numbers. It is an important insight of mathematical logic that incompleteness is more than a theoretical possibility: Several mathematical theorems have been shown to be unprovable in Peano arithmetic and in stronger theories~\cite{kirby-paris82,paris-harrington,simpson85,friedman-robertson-seymour}. The first example is Goodstein's theorem, which we discuss in the following.

The Goodstein sequence $G_{b,m}(0),G_{b,m}(1),\dots$ relative to a given starting value $m\in\mathbb N$ and to a non-decreasing function $b:\mathbb N\to\mathbb N$ with $b(0)\geq 2$ is generated by the following process: Put $G_{b,m}(0)=m$. Now assume that~$G_{b,m}(i)$ is already defined. If we have $G_{b,m}(i)=0$, then we set $G_{b,m}(i+1)=0$ and say that the Goodstein sequence terminates. Otherwise, we consider the hereditary base $b(i)$ representation of $G_{b,m}(i)$ (which is computed recursively, by writing the exponents of the usual base $b(i)$ representation in hereditary base $b(i)$ as well). In order to obtain $G_{b,m}(i+1)$, we increase the base to $b(i+1)$ and subtract one from the resulting number. For example, $G_{b,m}(i)=2\,196$, $b(i)=3$ and $b(i+1)=5$ lead to
\begin{align*}
G_{b,m}(i)=3^{3^{3^0}\cdot 2+3^0}+3^{3^0\cdot 2}\,\leadsto\, G_{b,m}(i+1)=5^{5^{5^0}\cdot 2+5^0}+5^{5^0\cdot 2}-1=48\,828\,149.
\end{align*}
While the values grow fast for small exponents, Goodstein's theorem~\cite{goodstein44} states that they must eventually reach zero, for any starting value and any function~$b$ as above. In fact, Goodstein did more than establish the truth of this result: He showed that his theorem is equivalent to the well foundedness of a certain ordinal~$\varepsilon_0$ (provably in a weak base theory, which does not prove the equivalent statements itself).

Even before Goodstein's work, Gentzen~\cite{gentzen36,gentzen43} had shown that Peano arithmetic cannot prove transfinite induction up to~$\varepsilon_0$. This suggests an independence result, which was apparently stated in the submitted version of Goodstein's paper (see \cite{rathjen-goodstein} for all historical information in this paragraph). However, there is one issue with this independence result, which P.~Bernays pointed out in his role as referee: The above formulation of Goodstein's theorem quantifies over all functions $b:\mathbb N\to\mathbb N$. This means that it cannot be expressed in Peano arithmetic (though an extension by set parameters would suffice). As a consequence of this criticism, Goodstein's published paper states the connection with~$\varepsilon_0$ but not the independence~result.

The issue with Goodstein's independence result disappears if we fix some definable function $b:\mathbb N\to\mathbb N$. By a celebrated result of L.~Kirby and J.~Paris~\cite{kirby-paris82}, Goodstein's theorem for $b(i)=2+i$ remains unprovable in Peano arithmetic (see~\cite{rathjen-goodstein} for a proof based on Goodstein's methods). Let us emphasize that the independent statement of Kirby and Paris is very concrete: It asserts that a certain algorithm (which computes the sequence $G_{b,m}(0),G_{b,m}(1),\dots$) terminates on any input (the starting value $m$). While one may expect that parts of abstract mathematics lie beyond the scope of Peano arithmetic, it is remarkable that the latter fails to prove a concrete statement about the termination of a specific natural algorithm.

We have seen that Kirby and Paris's version of Goodstein's theorem (the one with $b(i)=2+i$) is more concrete than the original formulation (which quantifies over all $b:\mathbb N\to\mathbb N$). In the present paper we will push Goodstein's theorem into the opposite direction: Our version (presented below) is more abstract, since it involves existential as well as universal quantification over infinite objects. While concrete independence remains relevant, our abstract and more general results have important advantages: They yield a very uniform picture, in which variants of Goodstein's theorem are equivalent to well-established set existence principles. Specifically, our approach turns Goodstein's theorem into a topic for reverse mathematics (see~\cite{simpson09} for a comprehensive introduction). In this research programme one proves equivalences and (non-)implications between various theorems about finite objects (represented by natural numbers) and countably infinite collections of such objects (subsets of~$\mathbb N$).

Our variant of Goodstein's theorem is based on a very natural idea: When we compute $G_{b,m}(i+1)$ from $G_{b,m}(i)$, we will also increase the coefficients in our representation, not just the base. Since the base bounds the coefficients, the latter will be increased according to a family $c=\langle c_i\rangle_{i\in\mathbb N}$ of strictly increasing functions
\begin{equation*}
c_i:\{0,\dots,b(i)-1\}\to\{0,\dots,b(i+1)-1\}.
\end{equation*}
We cannot, however, increase the coefficients arbitrarily if we want our Goodstein sequences to terminate. As a simple but important example, we consider the binary representation of numbers below $2^b$, where we view $b$ as the base and the exponents as coefficients. More explicitly, the Goodstein sequence $G_{b,c,m}^{2}(0),G_{b,c,m}^{2}(1),\dots$ relative to an increasing function $b:\mathbb N\to\mathbb N$, a family $c=\langle c_i\rangle_{i\in\mathbb N}$ as above and a starting value $m<2^{b(0)}$ is given by $G_{b,c,m}^{2}(0)=m$ and
\begin{equation*}
G_{b,c,m}^{2}(i+1)=\begin{cases}
0 & \text{if $G_{b,c,m}^{2}(i)=0$},\\[1.5ex]
2^{c_i(n_0)}+\ldots+2^{c_i(n_k)}-1 & \parbox[t]{15em}{if $G_{b,c,m}^{2}(i)=2^{n_0}+\ldots+2^{n_k}$\\ with $b(i)>n_0>\ldots>n_k$.}
\end{cases}
\end{equation*}
Since we will study different representations of numbers, we have included a superscript~$2$ that indicates binary representation. To call $b(i)$ the base is somewhat awkward in the present example, but we will nevertheless keep this terminology. For $b(i)=2+i$, $c_i(n)=n+1$ and $m=3$ the definition yields
\begin{equation*}
G_{b,c,m}^{2}(0)=2^1+2^0,\, G_{b,c,m}^{2}(1)=2^2+2^1-1=2^2+2^0,\,\dots,\, G_{b,c,m}^{2}(i)=2^{i+1}+2^0,\,\dots.
\end{equation*}
Why does this sequence not reach zero? A glance at Figure~\ref{fig:glueing-coeff-change} is illuminating. In both subfigures, the first columns represent the sets $\{0,\dots,b(i)-1\}$ with $b(i)=2+i$. The solid arrows represent the functions $c_i$ that we employ to change coefficients. The last column of each subfigure represents the order that we obtain by glueing along the arrows, or in categorical terms: the direct limit. In the left subfigure we have $c_i(n)=n$, as in the usual Goodstein theorem. The direct limit is isomorphic to the well order~$\mathbb N$. In the right subfigure we have $c_i(n)=n+1$, as in our example of a Goodstein sequence that does not terminate. We will see that such an example can only exist if there is an infinitely descending sequence in the direct limit. In the present case, the latter is isomorphic to $\{p\in\mathbb Z\,|\,p\leq 0\}$.

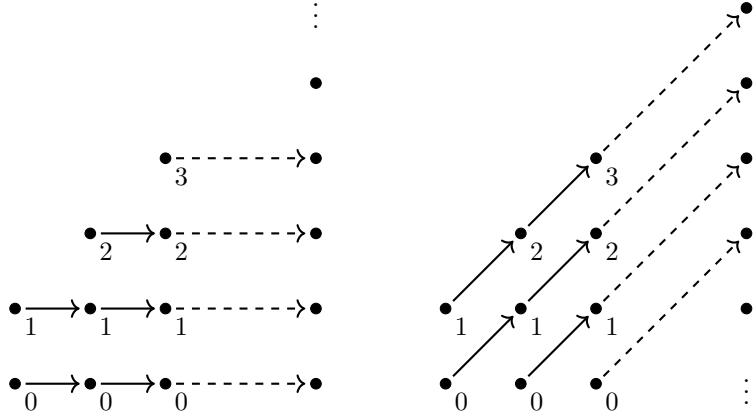
\begin{figure}[h]
\centering
\begin{equation*}
\begin{tikzpicture}
\filldraw[black] (0,0) circle (2pt) node[below right] {0};
\filldraw[black] (0,1) circle (2pt) node[below right] {1};
\filldraw[black] (1,0) circle (2pt) node[below right] {0};
\filldraw[black] (1,1) circle (2pt) node[below right] {1};
\filldraw[black] (1,2) circle (2pt) node[below right] {2};
\filldraw[black] (2,0) circle (2pt) node[below right] {0};
\filldraw[black] (2,1) circle (2pt) node[below right] {1};
\filldraw[black] (2,2) circle (2pt) node[below right] {2};
\filldraw[black] (2,3) circle (2pt) node[below right] {3};
\filldraw[black] (4,0) circle (2pt) node[below right] {};
\filldraw[black] (4,1) circle (2pt) node[below right] {};
\filldraw[black] (4,2) circle (2pt) node[below right] {};
\filldraw[black] (4,3) circle (2pt) node[below right] {};
\filldraw[black] (4,4) circle (2pt) node[below right] {};
\node at (4,5) {$\vdots$};
\draw [shorten >=4pt,shorten <=4pt,->,thick] (0,0) -- (1,0);
\draw [shorten >=4pt,shorten <=4pt,->,thick] (0,1) -- (1,1);
\draw [shorten >=4pt,shorten <=4pt,->,thick] (1,0) -- (2,0);
\draw [shorten >=4pt,shorten <=4pt,->,thick] (1,1) -- (2,1);
\draw [shorten >=4pt,shorten <=4pt,->,thick] (1,2) -- (2,2);
\draw [shorten >=4pt,shorten <=4pt,->,thick,dashed] (2,0) -- (4,0);
\draw [shorten >=4pt,shorten <=4pt,->,thick,dashed] (2,1) -- (4,1);
\draw [shorten >=4pt,shorten <=4pt,->,thick,dashed] (2,2) -- (4,2);
\draw [shorten >=4pt,shorten <=4pt,->,thick,dashed] (2,3) -- (4,3);
\end{tikzpicture}
\qquad\qquad
\begin{tikzpicture}
\filldraw[black] (0,0) circle (2pt) node[below right] {0};
\filldraw[black] (0,1) circle (2pt) node[below right] {1};
\filldraw[black] (1,0) circle (2pt) node[below right] {0};
\filldraw[black] (1,1) circle (2pt) node[below right] {1};
\filldraw[black] (1,2) circle (2pt) node[below right] {2};
\filldraw[black] (2,0) circle (2pt) node[below right] {0};
\filldraw[black] (2,1) circle (2pt) node[below right] {1};
\filldraw[black] (2,2) circle (2pt) node[below right] {2};
\filldraw[black] (2,3) circle (2pt) node[below right] {3};
\node at (4,0) {$\vdots$};
\filldraw[black] (4,1) circle (2pt) node[below right] {};
\filldraw[black] (4,2) circle (2pt) node[below right] {};
\filldraw[black] (4,3) circle (2pt) node[below right] {};
\filldraw[black] (4,4) circle (2pt) node[below right] {};
\filldraw[black] (4,5) circle (2pt) node[below right] {};
\draw [shorten >=4pt,shorten <=4pt,->,thick] (0,0) -- (1,1);
\draw [shorten >=4pt,shorten <=4pt,->,thick] (0,1) -- (1,2);
\draw [shorten >=4pt,shorten <=4pt,->,thick] (1,0) -- (2,1);
\draw [shorten >=4pt,shorten <=4pt,->,thick] (1,1) -- (2,2);
\draw [shorten >=4pt,shorten <=4pt,->,thick] (1,2) -- (2,3);
\draw [shorten >=4pt,shorten <=4pt,->,thick,dashed] (2,0) -- (4,2);
\draw [shorten >=4pt,shorten <=4pt,->,thick,dashed] (2,1) -- (4,3);
\draw [shorten >=4pt,shorten <=4pt,->,thick,dashed] (2,2) -- (4,4);
\draw [shorten >=4pt,shorten <=4pt,->,thick,dashed] (2,3) -- (4,5);
\end{tikzpicture}
\end{equation*}
\caption{Two ways to increase coefficients for representations with base~$b(i)=2+i$. In the left figure, no coefficients are increased, which yields the limit~$\mathbb N$. In the right figure, all co\-ef\-fi\-cients are increased by one, which yields the limit~$\{p\in\mathbb Z\,|p\leq 0\}$.}
\label{fig:glueing-coeff-change}
\end{figure}

In Figure~\ref{fig:glueing-coeff-change} we see two ways to increase coefficients. Of course there are many more. In the direct limit, one can obtain any countable linear order (which just means that any such order is an increasing union of finite suborders). In particular, this includes all countable ordinals, which indicates the strength of our approach. Our next objective is a condition on the functions~$c_i$ which ensures that there is no descending sequence in the direct limit. Let us agree to abbreviate
\begin{equation*}
c_{ij}:=c_{j-1}\circ\ldots\circ c_i:\{0,\dots,b(i)-1\}\to\{0,\dots,b(j)-1\}
\end{equation*}
for $i<j$, where we assume that $c_k$ has (co-)domain as in the following.

\begin{definition}\label{def:Goodstein-system}
A Goodstein system is a pair $(b,c)$ of a non-decreasing function $b:\mathbb N\to\mathbb N$ with $b(0)>0$ and a family $c=\langle c_i\rangle_{i\in\mathbb N}$ of strictly increasing functions
\begin{equation*}
c_i:\{0,\dots,b(i)-1\}\to\{0,\dots,b(i+1)-1\}
\end{equation*}
with the following property: For any function $d:\mathbb N\to\mathbb N$ with $d(i)<b(i)$ for all~$i\in\mathbb N$ and any infinite $Y\subseteq\mathbb N$, there are $i,j\in Y$ with $i<j$ and $c_{ij}(d(i))\leq d(j)$.
\end{definition}

For $b(i)=2+i$ and $c_i(n)=n+1$ as in the example above, the constant function with values $d(i)=0$ witnesses that $(b,c)$ is no Goodstein system, as $i<j$ entails
\begin{equation*}
c_{ij}(d(i))=c_{ij}(0)=j-i>0=d(j).
\end{equation*}
This corresponds to the following observation about the right half of Figure~\ref{fig:glueing-coeff-change}: The points labelled zero in consecutive columns are mapped to a descending sequence in the direct limit. It is no coincidence that our counterexample is eliminated: We will see that all Goodstein sequences that are defined with respect to a Goodstein system do terminate. This fact will be called the extended Goodstein theorem.

The following result determines the precise strength of the extended Goodstein theorem for the binary representation (i.\,e.~for sequences $G_{b,c,m}^{2}(0),G_{b,c,m}^{2}(1),\dots$ as defined above). We recall that $\rca_0$ (recursive comprehension axiom) is a weak base theory that is commonly used in reverse mathematics (see~\cite{simpson09}). Arithmetical comprehension is a fundamental set existence principle, which is equivalent (over~$\rca_0$) to Ascoli's lemma and to K\H{o}nig's lemma for finitely branching trees. The extension of $\rca_0$ by arithmetical comprehension proves the same statements about natural numbers (i.\,e.~the same statements of first order arithmetic) as Peano arithmetic. Since $\rca_0$ and arithmetical comprehension are valid, the result does, in particular, prove the extended Goodstein theorem for the binary representation. We point out that the theorem will be restated and proved in the following section, which explains the numbering.

{
\def\thetheorem{\ref{thm:Goodstein-ACA}}
\addtocounter{theorem}{-1}
\begin{theorem}
The following are equivalent over~$\rca_0$:
\begin{enumerate}[label=(\roman*)]
\item arithmetical comprehension,
\item the extended Goodstein theorem for the binary representation: for any Goodstein system $(b,c)$ and any $m<2^{b(0)}$ there is an $i\in\mathbb N$ with $G_{b,c,m}^{2}(i)=0$.
\end{enumerate}
\end{theorem}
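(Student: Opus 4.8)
The plan is to prove the two implications separately, linking them through the direct limit order $L=L(b,c)$ of the sets $\{0,\dots,b(i)-1\}$ along the maps $c_i$: its elements are $\sim$-classes of pairs $(i,n)$ with $n<b(i)$, where for $i\le j$ one puts $(i,n)\sim(j,m)$ iff $c_{ij}(n)=m$ and $[(i,n)]<[(j,m)]$ iff $c_{ij}(n)<m$. This order is $\Delta^0_1$ in $(b,c)$, so it exists already in $\rca_0$. A first step, provable in $\rca_0$, is the combinatorial equivalence: $(b,c)$ is a Goodstein system if and only if $L(b,c)$ is a well order. Indeed, from an infinite descending sequence in $L$ one obtains, after choosing representatives with strictly increasing first coordinates, a function $d$ (with $d(i)<b(i)$) and an infinite set $Y$ witnessing the failure of the defining property; conversely, witnesses $d,Y$ yield an infinite descending sequence $[(y_0,d(y_0))]>[(y_1,d(y_1))]>\dots$ for $Y=\{y_0<y_1<\cdots\}$. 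Finally I record the norm map: for $N=2^{n_0}+\dots+2^{n_k}$ with $b(i)>n_0>\dots>n_k\ge 0$ put $o_i(N)=\omega^{[(i,n_0)]}+\dots+\omega^{[(i,n_k)]}$, an element of the order $D(L)$ of formal sums $\omega^{\ell_0}+\dots+\omega^{\ell_k}$ with $\ell_0>\dots>\ell_k$ in $L$. Each $o_i$ is an order embedding of $\{0,\dots,2^{b(i)}-1\}$ into $D(L)$, and a short computation with base-$2$ representations shows that $o_{i+1}(G^2_{b,c,m}(i+1))$ is the immediate predecessor of $o_i(G^2_{b,c,m}(i))$ inside $\rng(o_{i+1})$; hence $i\mapsto o_i(G^2_{b,c,m}(i))$ is strictly descending in $D(L)$ until it reaches~$0$.

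For (i)$\Rightarrow$(ii): given a Goodstein system $(b,c)$, the combinatorial equivalence gives that $L$ is a well order, and arithmetical comprehension then implies that $D(L)$ is a well order as well -- this is the one appeal to the well-ordering principle for base-$2$ exponentiation, which $\aca_0$ proves. If $G^2_{b,c,m}$ never reached $0$, the sequence $i\mapsto o_i(G^2_{b,c,m}(i))$ would be an infinite descending sequence in $D(L)$, a contradiction; so it reaches~$0$. Everything here formalises routinely in $\rca_0$, the only non-elementary ingredient being that instance of the well-ordering principle.

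For (ii)$\Rightarrow$(i): I will derive, for an arbitrary well order $X$, the well-orderedness of the order $D(X)$ of finite strictly decreasing sequences from $X$ (binary Cantor normal forms over $X$); the well-ordering principle $X\mapsto D(X)$ is equivalent to $\aca_0$ over $\rca_0$, by an adaptation of the Girard--Hirst analysis of $X\mapsto\omega^X$. So fix a well order $X$ and, for contradiction, an infinite descending sequence $\alpha_0>\alpha_1>\cdots$ in $D(X)$. From these data I build a Goodstein system $(b,c)$ and a starting value $m<2^{b(0)}$ with $o_i(G^2_{b,c,m}(i))=\alpha_i$ for all $i$; then $G^2_{b,c,m}(i)\ne 0$ for every~$i$, contradicting (ii), and the contradiction forces $D(X)$ to be a well order. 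The construction is stagewise: at stage $i$ one presents the finite set of $X$-elements occurring among $\alpha_0,\dots,\alpha_i$ -- interleaved with a fixed enumeration of $X$, so that the direct limit $L$ comes out isomorphic to $X$, hence a well order, so that $(b,c)$ is genuinely a Goodstein system -- as $\{[(i,0)],\dots,[(i,b(i)-1)]\}$; one then chooses $b(i+1)$ large enough and $c_i$ so finely that $\alpha_{i+1}\in\rng(o_{i+1})$ and $\alpha_{i+1}$ is the immediate predecessor of $\alpha_i$ there, which is precisely the relation forced by one Goodstein step. The identity $o_i(G^2_{b,c,m}(i))=\alpha_i$ then pins down $G^2_{b,c,m}(i)$, and one checks that the recursion clause for $G^2_{b,c,m}(i+1)$ is satisfied.

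The main obstacle is this stagewise construction. One must arrange that the ``small'' exponents newly created by subtracting $1$ realise exactly the tail by which $\alpha_{i+1}$ falls short of $\alpha_i$ in $D(X)$, while keeping $c_i$ strictly increasing into $\{0,\dots,b(i+1)-1\}$ and, simultaneously, engineering the direct limit to be isomorphic to the prescribed well order $X$. Reconciling these demands -- and keeping $(b,c)$ and $m$ computable (that is, $\Delta^0_1$) in $X$ and $\langle\alpha_i\rangle$, so that the whole argument stays inside $\rca_0$ together with the assumed instance of (ii) -- is where the real work lies; the remainder is bookkeeping about Cantor normal forms over a linear order.
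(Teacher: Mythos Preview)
Your direction (i)$\Rightarrow$(ii) is correct and matches the paper's argument (specialised to the binary dilator): build the direct limit $L$, observe that the Goodstein condition forces $L$ to be well founded, push the Goodstein sequence into $D(L)\cong 2^L$ via the norm maps, and invoke the well-ordering principle for $X\mapsto 2^X$.

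The direction (ii)$\Rightarrow$(i), however, contains a genuine gap. You aim for the exact identity $o_i(G^2_{b,c,m}(i))=\alpha_i$, equivalently that $\alpha_{i+1}$ be the \emph{immediate predecessor} of $\alpha_i$ in $\rng(o_{i+1})$. This cannot be arranged in general. Take $x_2<_X x_1<_X x_0$ and set $\alpha_0=\omega^{x_0}+\omega^{x_2}$, $\alpha_1=\omega^{x_1}$. In order to have $o_0(m)=\alpha_0$ the stage-$0$ set must already contain $x_0$ and $x_2$, so $m=2^{p}+2^{q}$ where $p,q$ are the positions of $x_0,x_2$. Since $c_0$ must carry position $p$ to the position of $x_0$ at stage $1$, the number $2^{c_0(p)}+2^{c_0(q)}-1$ always has the bit at the position of $x_0$ set, and hence $o_1(G^2_{b,c,m}(1))$ has leading term $\omega^{x_0}$; it can never equal $\omega^{x_1}$. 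No enlargement of $b(1)$ or refinement of $c_0$ helps: the leading exponent of $\alpha_0$ is pinned to $x_0$ and survives a single Goodstein step.

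The fix is to abandon equality and prove the \emph{inequality} $\alpha_i\le o_i(G^2_{b,c,m}(i))$, which is exactly what the paper does (in the general form of the Claim inside Theorem~\ref{thm:Goodstein-dilator}). One takes $b(i)$ to enumerate $\{\star\}\cup a_0\cup\dots\cup a_i$ where $a_j$ is the support of $\alpha_j$, lets $c_i$ be the inclusion-induced map, and sets $m$ so that $o_0(m)=\alpha_0$. An easy induction then gives: if $\alpha_0>\dots>\alpha_i$ and the Goodstein sequence has not yet hit~$0$, then $\alpha_i\le o_i(G^2_{b,c,m}(i))$; the step uses only that $o_{i+1}(G(i+1))$ is the predecessor of $o_i(G(i))$ in $\rng(o_{i+1})$ together with $\alpha_{i+1}<\alpha_i$ and $\alpha_{i+1}\in\rng(o_{i+1})$. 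When the Goodstein sequence reaches $0$ at some minimal index~$i$, the inequality forces $\alpha_i\le 0$, so the $\alpha$-sequence cannot be strictly descending forever. This version requires no interleaving with an enumeration of~$X$ and no delicate control of immediate predecessors, and it goes through in~$\rca_0$.
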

}

Let us observe that the definition of Goodstein systems involves universal quantification over infinite objects (``for all $d:\mathbb N\to\mathbb N$ and $Y\subseteq\mathbb N$"). In the extended Goodstein theorem the quantification becomes existential, since the notion of Goodstein system appears in an assumption. Hence the extended Goodstein theorem does entail the existence of infinite objects, as claimed above. This is unavoidable if we want an equivalence with arithmetical comprehension, which does itself assert the existence of uncomputable sets.

A detailed proof of Theorem~\ref{thm:Goodstein-ACA} will be given in the following section. Very briefly, the idea is to extend the operation $n\mapsto 2^n$ from natural numbers to (countable) linear orders: Its value on an order $(X,<_X)$ is defined as the set
\begin{equation*}
2^X=\{\langle x_0,\dots,x_{k-1}\rangle\,|\,x_0,\dots,x_{k-1}\in X\text{ with }x_{k-1}<_X\ldots <_X x_0\}
\end{equation*}
of finite descending sequences in~$X$, ordered lexicographically. It is known that arithmetical comprehension is equivalent to the statement that $X\mapsto 2^X$ preserves well orders (see~\cite{girard87,hirst94}). To prove that~(i) implies~(ii), one verifies that the direct limit~$X$ over a given Goodstein system~$(b,c)$ is well founded (cf.~Figure~\ref{fig:glueing-coeff-change}). One then shows that a Goodstein sequence $G_{b,c,m}^{2}(0),G_{b,c,m}^{2}(1),\dots$ can be mapped to a sequence in~$2^X$, which descends until the Goodstein sequence reaches zero. Since arithmetical comprehension ensures that~$2^X$ is well founded, this must happen eventually. To prove the converse, one observes that any countable well order arises as the direct limit over some Goodstein system $(b,c)$. One then shows that any descending sequence in~$2^X$ is majorized by the image of some Goodstein sequence (possibly after modifying~$b$). Using~(ii), one can conclude that $2^X$ contains no infinitely descending sequences. By the aforementioned result this suffices to establish~(i).

The argument from the previous paragraph remains valid if we replace the binary representation by other representations with suitable uniqueness and monotonicity properties. A general framework is provided by J.-Y.~Girard's notion of dilator~\cite{girard-pi2} (cf.~the related approaches by P.~Aczel~\cite{aczel-normal-functors} and S.~Feferman~\cite{feferman68}, as well as the discussion in~\cite{rathjen-well-ordering-principles}): Consider the category~$\lo$ of linear orders with the order embeddings (strictly increasing functions) as morphisms. The category~$\nat$ of natural numbers is the full subcategory with objects~$b:=\{0,\dots,b-1\}$ for $b\in\mathbb N$ (ordered as usual). Let us now introduce one of our central objects of study:

\begin{definition}\label{def:Goodstein-dilator}
A Goodstein dilator is a functor $D:\nat\to\lo$ such that
\begin{enumerate}[label=(\roman*)]
\item the orders $D(b)$ for $b\in\mathbb N$ are equal (not just isomorphic) to initial segments of~$\mathbb N$ (with the usual order between natural numbers),
\item the functor $D$ preserves pullbacks,
\item we have $0\in D(0)$ and $D(\emp_b)(0)=0\in D(b)$ for all numbers~$b\in\mathbb N$, where we write $\emp_b:0=\emptyset\to\{0,\dots,b-1\}=b$ for the empty function.
\end{enumerate}
\end{definition}

In view of condition~(i), we consider elements of~$D(b)$ as numbers (not terms) that have a base~$b$ representation. Note in particular that $\sigma-1\in D(b)$ is defined for~\mbox{$0\neq\sigma\in D(b)$}. To change coefficients according to a strictly increasing function $c:b\to b'$, one simply applies the morphism $D(c):D(b)\to D(b')$. Condition~(ii) ensures that these morphisms determine representations in a more familiar sense (cf.~the expression $D(e_a^n)(\sigma)$ in the proof of Proposition~\ref{prop:dil-extend} below, where $a$ can be seen as a list of coefficients). Also, condition~(ii) will allow us to define a well behaved extension $\overline D:\lo\to\lo$ of~$D$ into a transformation of linear orders (see~\cite{girard-pi2} and the following section). We remark that conditions~(i) and~(ii) are similar to the definition of weakly finite dilators, except that the latter must have finite values $D(b)$ for $b\in\mathbb N$. We allow $D(b)=\mathbb N$, in order to account for the possibility that every number has a base~$b$ representation. Let us point out that our definition does not require $\overline D$ to preserve well orders. This means that the extension of a Goodstein dilator may be a pre-dilator rather than a dilator in the strict sense. Condition~(iii) can be seen as a very weak form of well foundedness: It ensures that the elements $0\in D(b)$ do not map to a descending sequence in the direct limit~$\overline D(\mathbb N)$. We will use this condition in the proof of Theorem~\ref{thm:Goodstein-dilator} below. It can probably be weakened, but the present version of condition~(iii) is satisfied in our applications and appears quite natural: Intuitively speaking, it asserts that the number zero has a representation without coefficients that is independent of the base. We will study the following general notion of Goodstein sequence:

\begin{definition}\label{def:gen-Goodstein-sequence}
For a Goodstein dilator~$D$, a Goodstein system $(b,c)$ and a natural number~$m\in D(b(0))$, we define the Goodstein sequence $G_{b,c,m}^D(0),G_{b,c,m}^D(1),\dots$ by
\begin{equation*}
G_{b,c,m}^D(0)=m\quad\text{and}\quad G_{b,c,m}^D(i+1)=\begin{cases}
0 & \text{if $G_{b,c,m}^D(i)=0$},\\
D(c_i)(G_{b,c,m}^D(i))-1 & \text{otherwise}.
\end{cases}
\end{equation*}
When only given a non-decreasing $b:\mathbb N\to\mathbb N$ with $b(0)>0$ and an~$m\in D(b(0))$, we set $G^D_{b,m}(i):=G^D_{b,c,m}(i)$ for the functions $c_j:b(j)\to b(j+1)$ with $c_j(n)=n$.
\end{definition}

Note that we can indeed subtract one in the second case of the case distinction, since $0<G_{b,c,m}^D(i)$ in $D(b(i))$ entails $0\leq D(c_i)(0)<D(c_i)(G_{b,c,m}^D(i))$ in $D(b(i+1))$. By generalizing the argument for Theorem~\ref{thm:Goodstein-ACA} that we have sketched above, one can prove (a suitable formalization of) the next theorem. Conversely, the official proof that we give in the next section will deduce Theorem~\ref{thm:Goodstein-ACA} from the following.

{
\def\thetheorem{\ref{thm:Goodstein-dilator}}
\addtocounter{theorem}{-1}
\begin{theorem}
The theory $\rca_0$ proves that the following are equivalent for any Goodstein dilator~$D$:
\begin{enumerate}[label=(\roman*)]
\item the extended Goodstein theorem for~$D$: for any Goodstein system $(b,c)$ and any $m\in D(b(0))$ there is an $i\in\mathbb N$ with $G_{b,c,m}^D(i)=0$,
\item the extension $\overline D:\lo\to\lo$ of $D:\nat\to\lo$ preserves well foundedness, i.\,e.~the order $\overline D(X)$ is well founded for any well order~$X$.
\end{enumerate}
\end{theorem}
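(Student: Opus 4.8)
The plan is to prove the two implications separately, using the description of $\overline D$ as a direct limit that is already available from Girard's theory of pre-dilators. For the direction (i)$\Rightarrow$(ii), I would argue by contraposition: suppose $X$ is a well order but $\overline D(X)$ is not well founded, so there is an infinite descending sequence $\sigma_0 >_{\overline D(X)} \sigma_1 >_{\overline D(X)} \cdots$. Each $\sigma_i$ lies in the image of some $D(b_i) \to \overline D(X)$, determined by a finite suborder of $X$ together with an element of $D$ applied to it; since $X$ is well founded and the supports of the $\sigma_i$ are finite, one can organise these finite suborders into a Goodstein system $(b,c)$ whose direct limit embeds the relevant part of $X$. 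The key point is that the $-1$ operation in $D(b(i+1))$ together with the coefficient-change maps $D(c_i)$ can be arranged to dominate the given descending sequence step by step: I would choose $m \in D(b(0))$ above $\sigma_0$ and show by induction that $G^D_{b,c,m}(i)$ stays above $\sigma_i$ in the appropriate sense, so that the Goodstein sequence cannot reach $0$, contradicting (i). Here condition~(ii) on $D$ (preservation of pullbacks) is what guarantees that the finite pieces fit together coherently into a genuine Goodstein system, and condition~(iii) ensures the $0$'s behave.

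For the converse (ii)$\Rightarrow$(i), fix a Goodstein system $(b,c)$ and a starting value $m \in D(b(0))$. The functions $c_i : b(i) \to b(i+1)$ form a direct system in $\nat$, so applying $D$ yields a direct system $D(b(0)) \to D(b(1)) \to \cdots$ of linear orders; let $X$ be the direct limit of the underlying index orders $b(i)$ under the $c_i$ (a countable linear order, formed in $\rca_0$ as an increasing union of finite orders). The defining property of a Goodstein system — for every $d$ with $d(i) < b(i)$ and every infinite $Y$ there are $i<j$ in $Y$ with $c_{ij}(d(i)) \le d(j)$ — is exactly the statement that $X$ has no infinite descending sequence, i.e.\ $X$ is a well order. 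By (ii), $\overline D(X)$ is then well founded. Now the coherence of $D$ on the direct system gives a map from $\bigcup_i D(b(i))$ (along the $D(c_i)$) into $\overline D(X)$; I would check that the Goodstein sequence $G^D_{b,c,m}(i)$, viewed through these maps, produces a strictly descending sequence in $\overline D(X)$ for as long as it is nonzero — the point being that $D(c_i)$ is an embedding, so $D(c_i)(G^D_{b,c,m}(i))$ maps to the same element of the limit as $G^D_{b,c,m}(i)$, while subtracting $1$ strictly decreases it. Since $\overline D(X)$ is well founded, this descent must terminate, which can only happen when some $G^D_{b,c,m}(i) = 0$. Condition~(iii) is used to see that the images in $\overline D(X)$ really do strictly decrease (rather than stabilising at a representation of $0$ with varying base).

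The main obstacle, I expect, is the bookkeeping in (i)$\Rightarrow$(ii): extracting a Goodstein system from an arbitrary descending sequence in $\overline D(X)$ requires controlling the finite supports of the $\sigma_i$ inside $X$, reindexing so that the supports grow in a way compatible with the $c_i$, and then verifying that the resulting $(b,c)$ genuinely satisfies the universal condition in Definition~\ref{def:Goodstein-system} — this is where the well-foundedness of $X$ must be fed in carefully, since a careless construction could produce a $d$ and $Y$ violating the Goodstein-system condition. A secondary subtlety is that all of this must be carried out in $\rca_0$, so the direct limits and the extension $\overline D$ must be handled as explicitly coded countable objects rather than abstract colimits, and one must be careful that the equivalence is proved without appealing to the well-ordering principle for $2^X$ (which would be circular, since that principle is itself equivalent to arithmetical comprehension and hence, via Theorem~\ref{thm:Goodstein-ACA}, to a special case of the statement being proved).
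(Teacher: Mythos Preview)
Your proposal is correct and follows essentially the same route as the paper: for (ii)$\Rightarrow$(i) you form the direct limit $X$ of the system $(b(i),c_i)$, verify it is a well order using the Goodstein-system condition, and map the Goodstein sequence into $\overline D(X)$ to obtain a descending sequence; for (i)$\Rightarrow$(ii) you take a putative descending sequence in $\overline D(X)$, build a Goodstein system from the finite supports (the paper sets $b(i)=|\{\star\}\cup a_0\cup\dots\cup a_i|$ and takes the $c_i$ from the inclusions), and shows by induction that the Goodstein sequence dominates the given sequence. One small correction: condition~(iii) of Definition~\ref{def:Goodstein-dilator} is used only in the direction (i)$\Rightarrow$(ii), at the very end, to handle the step where $G^D_{b,c,m}(i)=0$ but the original sequence might continue --- it plays no role in (ii)$\Rightarrow$(i), where the strict descent follows directly from the embedding $D(c_i)$ and the subtraction.
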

}

The equivalence in the theorem remains valid when one fixes $X=\mathbb N$ in~(ii) and, at the same time, disallows coefficient changes in~(i), by demanding $c_i(n)=n$ for all $n<b(i)$. For the Goodstein dilator~$D$ that is induced by the hereditary exponential notation, this equivalence (but not the one with coefficient changes) was proved in Goodstein's original paper~\cite{goodstein44}. A more detailed formulation of the general result without coefficient changes is provided by Theorem~\ref{thm:Goodstein-Pi11} below.

The idea to investigate Goodstein sequences in terms of dilators is due to M.~Abrusci, J.-Y.~Girard and J.~van~de Wiele~\cite{abrusci-goodstein,AGW-goodstein}. The latter have focused on conrete versions of Goodstein's theorem, such as the one of Kirby and Paris (``finite combinatorics"). They have not considered more abstract (``infinitary") results in reverse mathematics, such as our Theorems~\ref{thm:Goodstein-ACA} and~\ref{thm:Goodstein-dilator}. In their papers, Goodstein sequences are defined with respect to a given dilator, but coefficients are not allowed to increase.

The binary representation of natural numbers can be conceived as a Goodstein dilator with extension $X\mapsto 2^X$, as we shall see in the following section. In view of this fact, Theorem~\ref{thm:Goodstein-dilator} reduces Theorem~\ref{thm:Goodstein-ACA} to the known equivalence between arithmetical comprehension and the statement that $X\mapsto 2^X$ preserves well foundedness. The latter is an example of a well ordering principle. Many equivalences between well ordering principles and important set existence principles can be found in the literature (see~\cite{marcone-montalban,rathjen-afshari,rathjen-weiermann-atr,rathjen-atr,rathjen-model-bi,thomson-rathjen-Pi-1-1}). Do they all give rise to instances of the extended Goodstein theorem? Let us explore this question in a specific case:

The Veblen hierarchy of normal functions on the ordinals (see~\cite{schuette77,marcone-montalban}) can be cast as a transformation $X\mapsto\varphi_{1+X}0$ of linear orders (where $1+X$ represents the extension of~$X$ by a new minimum element). By a result of H.~Friedman (see~\cite{rathjen-weiermann-atr,marcone-montalban} for published proofs), the following are equivalent over $\rca_0$:
\begin{itemize}
\item arithmetical transfinite recursion,
\item the transformation $X\mapsto\varphi_{1+X}0$ preserves well foundedness.
\end{itemize}
We recall that arithmetical transfinite recursion is a much stronger set existence principle than arithmetical comprehension. It is equivalent (over $\rca_0$) to Ulm's theorem on Abelian groups and to the clopen Ramsey theorem (see~\cite{simpson09}).

The transformation $X\mapsto\varphi_{1+X}0$ can be turned into a dilator (cf.~\cite{weiermann-veblen-dilator}), but the latter does not arise as the extension of a Goodstein dilator: Not even the value $\varphi_10\cong\varepsilon_0$ for the finite argument $X=0=\emptyset$ is isomorphic to an initial segment of~$\mathbb N$. For this reason, Theorem~\ref{thm:Goodstein-dilator} does not apply immediately. One can try, however, to ``slow down" the transformation $X\mapsto\varphi_{1+X}0$, so that it preserves finite arguments but ``catches up" on infinite ones. It is known that this is possible in the case of ordinal exponentiation: The operation $\alpha\mapsto\omega^\alpha$ does not arise as the extension of a Goodstein dilator, since $\omega^2$ is no initial segment of~$\mathbb N$. However, the equation~$\omega^\alpha=2^{\omega\cdot\alpha}$ ensures that many important properties are inherited by the slower transformation $\alpha\mapsto 2^\alpha$. We have seen that the latter does arise from a Goodstein dilator, which allows us to apply Theorem~\ref{thm:Goodstein-dilator} to deduce Theorem~\ref{thm:Goodstein-ACA}.

As one of our main results, we will show that the Veblen hierarchy is closely connected to a Goodstein dilator that is based on a variant of the Ackermann function (cf.~\cite{ackermann28}). Consider the fast-growing hiearchy of functions $F_b:\mathbb N\to\mathbb N$ for~$b\in\mathbb N$, as determined by the recursive clauses
\begin{equation*}
F_0(n)=n+1\quad\text{and}\quad F_{b+1}(n)=F^{1+n}_b(n)=\underbrace{F_b\circ\ldots\circ F_b}_{1+n\text{ iterations}}(n).
\end{equation*}
Our variant of the Ackermann function is given by $b\mapsto F_b(1)$. In Section~\ref{sect:Ackermann-to-Veblen} we will define a Goodstein dilator $A:\nat\to\lo$ with values
\begin{equation*}
A(b)=\{0,\dots,F_b(1)-1\}
\end{equation*}
on objects $b\in\mathbb N$ (note that it remains to specify the action on morphisms). It is well-known that $\rca_0$ cannot prove the totality of the Ackermann function, since the latter grows faster than any primitive recursive function. For the following considerations, we thus extend our base theory by the principle~$\isigma_2$ of \mbox{$\Sigma^0_2$-induction} (see~the beginning of Section~\ref{sect:Ackermann-to-Veblen} for more details).

Once we have defined the action of $A:\nat\to\lo$ on morphisms (see Section~\ref{sect:Ackermann-to-Veblen}), we obtain an extension into a functor $\overline A:\lo\to\lo$. The following result relates this extension to the Veblen hierarchy. For orders~$X$ and $Y$, we write $X\preceq Y$ to indicate that there is an embedding of $X$ into~$Y$. The sum~$X+Y$ is defined as the disjoint union with $x<_{X+Y}x'<_{X+Y}y<_{X+Y}y'$ for elements $x<_X x'$ of $X$ and elements $y<_Y y'$ of $Y$. In the product $X\times Y$, an inequality $(x,y)<_{X\times Y}(x',y')$ holds if we have $x<_X x'$, or $x=x'$ and $y<_Y y'$. By $2+X=1+(1+X)$ we denote the extension of~$X$ by two bottom elements.

{
\def\thetheorem{\ref{thm:Ackermann-Veblen}}
\addtocounter{theorem}{-1}
\begin{theorem}[$\rca_0+\isigma_2$]
We have
\begin{equation*}
\overline A(X)\preceq\varphi_{1+X}0\preceq\overline A((2+X)\times\mathbb N)
\end{equation*}
for any linear order~$X$.
\end{theorem}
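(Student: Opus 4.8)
The plan is to prove the two inequalities separately, in each case by exhibiting an explicit order embedding defined on \emph{term representations}. On the Veblen side, an ordinal below $\varphi_{1+X}0$ is presented by its Veblen normal form over the alphabet $1+X$. On the Ackermann side, an element of $\overline A(X)$ is presented by unwinding the action of $A$ on morphisms (to be defined in Section~\ref{sect:Ackermann-to-Veblen}) into a nested term over a finite increasing tuple of elements of~$X$; since that action is designed to mirror the recursion $F_{b+1}(n)=F_b^{1+n}(n)$, each such term is built from finitely many ``levels'', and the translations below will match one level of an $A$-term with one application of a Veblen function $\varphi_\gamma$. Throughout I use that $\rca_0$ proves the basic properties of the Veblen hierarchy — normality of each $\varphi_\gamma$, monotonicity of $\gamma\mapsto\varphi_\gamma 0$, and uniqueness of Veblen normal forms — and that $\rca_0+\isigma_2$ suffices for the inductions that verify the embeddings; this extra induction strength is exactly what makes $A$ total in the first place, as $b\mapsto F_b(1)$ is not primitive recursive.

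For the lower bound $\overline A(X)\preceq\varphi_{1+X}0$ I would use that both $\overline A(-)$ and $X\mapsto\varphi_{1+X}0$ are finitary functors $\lo\to\lo$ (the latter being a dilator), so that it suffices to construct a family of embeddings $A(b)\hookrightarrow\varphi_{1+b}0$ for $b\in\mathbb N$ compatible with the morphism actions of the two functors, and then pass to the direct limit over finite subtuples of~$X$. Each such embedding is the evident translation of an $A$-term of level $\leq b$ into a Veblen normal form with subscripts from $1+b$: the element $0\in A(b)$, which by condition~(iii) of Definition~\ref{def:Goodstein-dilator} is level-free and stable under the structure maps, is sent to the ordinal $0$; and the level-raising step of $A$ — the step responsible for passing from $F_b$ to $F_{b+1}$ — is translated into one application of the corresponding Veblen function, with finite sums handled coordinatewise. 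Order-preservation is then checked by induction on term complexity, the decisive point being that the order on $\overline A(X)$ induced by the dilator extension compares terms first by top level and then lexicographically within a level, which is precisely the shape of the comparison rule for Veblen normal forms. The map is not surjective because the $A$-terms form a strictly sparser subclass of the Veblen normal forms (at level $b$ only $F_b(1)$ coefficients are available, rather than a full initial segment of $\varphi_b 0$), which is why $\preceq$ cannot be improved to $\cong$ here.

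For the upper bound $\varphi_{1+X}0\preceq\overline A((2+X)\times\mathbb N)$ the roles are reversed: I would embed Veblen normal forms over $1+X$ into $A$-terms over $Y:=(2+X)\times\mathbb N$, again by recursion on the normal form. The $\mathbb N$-factor plays the role that the factor $\omega$ plays in the identity $\omega^\alpha=2^{\omega\cdot\alpha}$, which the paper uses for ordinal exponentiation: because $A(b)$ only reaches $F_b(1)$ whereas a Veblen term may require the analogue of $F_b$ at a large argument, one uses $F_{b+1}(n)=F_b^{1+n}(n)$ to simulate it by an $A$-term that spreads out over roughly $n$ consecutive copies of level~$b$ inside $Y$, the $\mathbb N$-coordinate being the register that counts these copies; in particular it is this factor that makes $\overline A((2+a)\times\mathbb N)$ a long enough infinite order to receive the infinite ordinal $\varphi_{|a|+1}0$ for each finite $a\subseteq X$. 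The two extra bottom elements supplied by $2+X$ absorb a bounded correction between the two notation systems — cancelling the $1+$ of the lower bound and accommodating the bottom Veblen function $\omega^{(-)}$. Monotonicity of this embedding is again proved by induction on normal-form complexity, now using the comparison rules for Veblen normal forms together with the fact — to be established in Section~\ref{sect:Ackermann-to-Veblen} — that the level-raising construction on $A$ is monotone and commutes with embeddings of the argument order.

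The main obstacle I anticipate is the bookkeeping on the $A$-side. Extracting the nested-term representation of $\overline A(X)$ from the morphism action of~$A$, and checking that its order and its normal-form/support conditions correspond level by level to those of Veblen normal forms, is delicate; and in the upper bound it is easy to be off by one level or by a factor of~$\omega$ when deciding how the ``$2+$'' and the ``$\times\mathbb N$'' are consumed, so verifying that the second embedding actually lands in $\overline A((2+X)\times\mathbb N)$ and is monotone — in particular getting the interplay between the $\mathbb N$-coordinate and the iteration $F_b^{1+n}$ exactly right — is the step that will require the most care. A secondary point is formalisation: every embedding, and every induction proving it order preserving, must be carried out in $\rca_0+\isigma_2$, and one should check that $\Sigma^0_2$-induction genuinely suffices — as it must, since that same resource is already needed to see that $A$ is well defined.
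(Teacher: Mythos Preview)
Your overall strategy --- reduce to an explicit term system for $\overline A$, then construct the two embeddings by recursion on terms with monotonicity proved by induction on a length measure --- is exactly the paper's approach. The paper introduces a term system $\widehat A(X)$ whose elements are expressions $\chi^{s_{k-1}}_{x_{k-1}}\circ\cdots\circ\chi^{s_0}_{x_0}(1)$ with $x_{k-1}<_X\cdots<_X x_0$ and normal-form constraints on the exponents, proves $\widehat A\cong\overline A$, and then sends such a term into $\varphi_{1+X}0$ via $o\bigl(\chi^{s_k}_{x_k}\circ\cdots\circ\chi^{s_0}_{x_0}(1)\bigr)=\varphi_{x_k}\bigl(o(\chi^{s_{k-1}}_{x_{k-1}}\circ\cdots\circ\chi^{s_0}_{x_0}(1))+o(s_k)\bigr)$. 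Your direct-limit plan for the lower bound is equivalent to this and would work.

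Where your sketch diverges is the upper bound, and your account of how $(2+X)\times\mathbb N$ is consumed is not what the paper does. The $\mathbb N$-coordinate is \emph{not} used to spread an iteration $F_b^{1+n}$ across $n$ consecutive copies of a level. Instead, each term $s\in\varphi_{1+X}0$ contributes exactly three $\chi$-applications at sub-levels $(H(s),0)$, $(H(s),1)$ and $(H(s),1+H_*(s))$, where $H(s)\in 2+X$ records the outermost Veblen index (with the extra bottom element $-1$ assigned precisely to sums and to~$0$, not as a generic correction), and $H_*(s)$ counts how many times $H(s)$ recurs along an iterated ``most-significant-subterm'' operator~$T$. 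The embedding is
\begin{equation*}
o(s)=\chi^{o(R(s))}_{(H(s),0)}\circ\chi^{o(T(s))}_{(H(s),1)}\circ\chi^{0}_{(H(s),1+H_*(s))}\bigl(o(T_*(s))\bigr),
\end{equation*}
with $T_*$ iterating $T$ until the top level strictly increases. Proving this monotone requires a cluster of auxiliary lemmas about how $H$, $H_*$, $T$, $T_*$ interact with the order on $\varphi_{1+X}0$; this is the real content of the upper bound, and it does not fall out of the iteration-unfolding picture you describe.
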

}

We point out that $X\cong\alpha$ entails $(2+X)\times\mathbb N\cong\omega\cdot(2+\alpha)$ in the sense of ordinal arithmetic. When $\alpha$ is a non-zero multiple of~$\omega^\omega$ (e.\,g.~an $\varepsilon$-number), then we have $\omega\cdot(2+\alpha)=\alpha=1+\alpha$, so that the orders in the previous theorem are isomorphic.

Definition~\ref{def:gen-Goodstein-sequence} yields Goodstein sequences $G_{b,c,m}^A(0),G_{b,c,m}^A(1),\dots$ for the Goodstein dilator~$A:\nat\to\lo$ (which also depend on the action of~$A$ on morphisms). By Theorems~\ref{thm:Goodstein-dilator} and~\ref{thm:Ackermann-Veblen}, the associated version of Goodstein's theorem is equivalent to the statement that $X\mapsto\varphi_{1+X}0$ preserves well foundedness. Together with Friedman's result from above, we obtain the following:

{
\def\thetheorem{\ref{thm:Goodstein-ATR}}
\addtocounter{theorem}{-1}
\begin{theorem}
The following are equivalent over~$\rca_0+\isigma_2$:
\begin{enumerate}[label=(\roman*)]
\item arithmetical transfinite recursion,
\item when $X$ is a well order, then so is $\overline A(X)$ (where $\overline A:\lo\to\lo$ extends the Goodstein dilator~$A:\nat\to\lo$ based on the Ackermann function),
\item the extended Goodstein theorem for the Goodstein dilator~$A$: for any Goodstein system $(b,c)$ and any $m\in A(b(0))$ there is an $i\in\mathbb N$ with $G_{b,c,m}^A(i)=0$.
\end{enumerate}
\end{theorem}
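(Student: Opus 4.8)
The plan is to prove the three-way equivalence by splitting it into two pieces: $\text{(ii)}\Leftrightarrow\text{(iii)}$ will be an immediate instance of Theorem~\ref{thm:Goodstein-dilator}, while $\text{(i)}\Leftrightarrow\text{(ii)}$ will follow by sandwiching the Veblen transformation between the two sides of Theorem~\ref{thm:Ackermann-Veblen} and feeding the outcome into Friedman's equivalence between arithmetical transfinite recursion and the preservation of well foundedness by $X\mapsto\varphi_{1+X}0$. All of this is carried out over $\rca_0+\isigma_2$: this is not merely a convenience but a necessity, since $\rca_0$ alone does not prove that the values $F_b(1)$ exist, so that without $\isigma_2$ neither the functor $\overline A$ nor the statements~(ii) and~(iii) are guaranteed to be meaningful.

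For the first piece I would record that the functor $A:\nat\to\lo$ constructed in Section~\ref{sect:Ackermann-to-Veblen} is, provably in $\rca_0+\isigma_2$, a Goodstein dilator in the sense of Definition~\ref{def:Goodstein-dilator}, with canonical extension $\overline A:\lo\to\lo$. Theorem~\ref{thm:Goodstein-dilator} (which we may use, being already a theorem of $\rca_0$), instantiated at $D=A$, then asserts that the extended Goodstein theorem for $A$ holds if and only if $\overline A$ preserves well foundedness --- that is, exactly $\text{(iii)}\Leftrightarrow\text{(ii)}$. Nothing further is needed for this part.

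For $\text{(i)}\Rightarrow\text{(ii)}$: assume arithmetical transfinite recursion. By Friedman's theorem the transformation $X\mapsto\varphi_{1+X}0$ preserves well foundedness, so for a well order $X$ the order $\varphi_{1+X}0$ is well founded; since $\overline A(X)\preceq\varphi_{1+X}0$ by the left half of Theorem~\ref{thm:Ackermann-Veblen}, pushing any descending sequence of $\overline A(X)$ forward along the witnessing embedding would yield one in $\varphi_{1+X}0$, so $\overline A(X)$ is well founded. For $\text{(ii)}\Rightarrow\text{(i)}$: let $X$ be a well order. Then $2+X$ is a well order, and hence so is $(2+X)\times\mathbb N$, by the fact --- provable in $\rca_0$ --- that a product of well orders is a well order (in a putative descending sequence the first coordinates are non-increasing in $2+X$, hence eventually constant, after which the second coordinates descend in $\mathbb N$, a contradiction). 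Hypothesis~(ii) now makes $\overline A((2+X)\times\mathbb N)$ well founded, and the right half of Theorem~\ref{thm:Ackermann-Veblen} supplies an embedding witnessing $\varphi_{1+X}0\preceq\overline A((2+X)\times\mathbb N)$, whence $\varphi_{1+X}0$ is well founded. As $X$ was an arbitrary well order, $X\mapsto\varphi_{1+X}0$ preserves well foundedness, and Friedman's theorem returns arithmetical transfinite recursion.

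I do not expect a genuine obstacle here: all of the mathematical substance lives in Theorems~\ref{thm:Goodstein-dilator} and~\ref{thm:Ackermann-Veblen} and in Friedman's theorem, and the argument above is assembly. The only things requiring care are bookkeeping: checking in $\rca_0$ that $\preceq$ reflects well foundedness and that a product of a well order with $\mathbb N$ is well founded; making sure the functor $A$ of Section~\ref{sect:Ackermann-to-Veblen} really satisfies all three clauses of Definition~\ref{def:Goodstein-dilator}, so that Theorem~\ref{thm:Goodstein-dilator} is applicable; and keeping the base theory at $\rca_0+\isigma_2$ throughout so that $\overline A$, and with it statements~(ii) and~(iii), remain well defined.
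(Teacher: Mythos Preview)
Your proposal is correct and follows essentially the same approach as the paper: the equivalence $\text{(ii)}\Leftrightarrow\text{(iii)}$ is obtained from Theorem~\ref{thm:Goodstein-dilator} (after noting, via Proposition~\ref{prop:Ack-Goodstein-dil}, that $A$ is a Goodstein dilator), and $\text{(i)}\Leftrightarrow\text{(ii)}$ is reduced to Friedman's equivalence for $X\mapsto\varphi_{1+X}0$ by the two embeddings of Theorem~\ref{thm:Ackermann-Veblen}, using that $(2+X)\times\mathbb N$ is a well order when $X$ is. The only cosmetic difference is that the paper names the intermediate statement ``$\varphi_{1+X}0$ is well founded for every well order~$X$'' as a separate item~(iv), whereas you weave it directly into the two implications.
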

}

It is interesting to compare this theorem with a result of T.~Arai, D.~Fern\'andez-Duque, S.~Wainer and A.~Weiermann~\cite{AFWW-goodstein}, which shows that a different  Ackermannian Goodstein theorem is independent of~$\rca_0$ plus arithmetical transfinite recursion.

\section{Goodstein sequences and well ordering principles}

In this section we explain how a Goodstein dilator $D:\nat\to\lo$ can be extended into a transformation $\overline D:\lo\to\lo$ of linear orders. The construction is due to Girard~\cite{girard-pi2} and has also been presented in~\cite{freund-computable}. Since the setting in both cited sources is somewhat different from ours, we recall the most relevant arguments. Once the construction of~$\overline D$ is complete, we prove Theorem~\ref{thm:Goodstein-dilator} from the introduction: Goodstein sequences with respect to~$D$ terminate if, and only if, $\overline D$ preserves well foundedness. The binary representation of natural numbers will be used as a running example. The information from this example allows us to derive Theorem~\ref{thm:Goodstein-ACA} at the end of the section.

The notion of Goodstein dilator has been explained in Definition~\ref{def:Goodstein-dilator}. Working in second order arithmetic, we assume that Goodstein dilators are represented by subsets of~$\mathbb N$: Both objects $n:=\{0,\dots,n-1\}$ and morphisms $f:m\to n$ of the category $\nat$ can be coded by natural numbers. A functor $D:\nat\to\lo$ can thus be given as a set of tuples $(n,\sigma)$ with $\sigma\in D(n)$, $(n,\sigma,\tau)$ with $\sigma<_{D(n)}\tau$, and $(f,\sigma,\tau)$ with $D(f)(\sigma)=\tau$. 

To decide whether a given functor $D:\nat\to\lo$ is a Goodstein dilator, we need to check if it preserves pullbacks. Our next objective is a criterion that facilitates this task. As preparation, we introduce the finite powerset functor $[\cdot]^{<\omega}$ on the category of sets, with
\begin{align*}
[X]^{<\omega}&=\text{``the set of finite subsets of~$X$"},\\
[f]^{<\omega}(a)&=\{f(x)\,|\,x\in a\}\quad\text{(with $f:X\to Y$ and $a\in[X]^{<\omega}$)}.
\end{align*}
For $X=n=\{0,\dots,n-1\}$, the set $[n]^{<\omega}$ is, of course, the full power set. We will also write $[X]^{<\omega}$ when $X$ is a linear order, omitting the forgetful functor to its underlying set. Conversely, subsets of the latter will often be considered as suborders. We can now formulate the promised criterion. Note that $\rng(f)$ denotes the range (in the sense of image) of a function~$f$.

\begin{proposition}[$\rca_0$]\label{prop:pullback-nat-transf}
The following are equivalent for $D:\nat\to\lo$:
\begin{enumerate}[label=(\roman*)]
\item the functor~$D$ preserves pullbacks,
\item there is a natural transformation $\supp:D\Rightarrow[\cdot]^{<\omega}$ such that
\begin{equation*}
\supp_n(\sigma)\subseteq\rng(f)\quad\Rightarrow\quad\sigma\in\rng(D(f))
\end{equation*}
holds for any morphism $f:m\to n$ and any $\sigma\in D(n)$.
\end{enumerate}
If a natural transformation as in~(ii) exists, then it is unique.
\end{proposition}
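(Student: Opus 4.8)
The plan is to prove the two implications separately, with the backward direction (ii)$\Rightarrow$(i) being mostly the standard construction of supports for a pullback-preserving functor, and the forward direction (i)$\Rightarrow$(ii) being the place where one actually uses pullback preservation. I would begin by recalling the relevant pullbacks in $\nat$: for morphisms $f:m\to n$ and $g:k\to n$, the pullback is the suborder $P=\{(x,y)\in m\times k\mid f(x)=g(y)\}$ with the two projections, and since $f,g$ are order embeddings, $P$ is (isomorphic to) the order $\rng(f)\cap\rng(g)$ with the obvious embeddings into $m$ and $k$. In particular, for the special case $g=f$, the pullback of $f$ against itself has underlying order $\rng(f)$ and the "equalizer-like" projection says exactly that $\sigma\in D(n)$ lies in $\rng(D(f))$ iff it is in the image of the pullback leg — this is the seed of the support characterization.

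For (i)$\Rightarrow$(ii): given $\sigma\in D(n)$, I would define $\supp_n(\sigma)$ to be the least finite set $a\subseteq n$ such that $\sigma\in\rng(D(\iota_a))$, where $\iota_a:|a|\hookrightarrow n$ is the inclusion of $a$ as a suborder (identifying $a$ with the object $|a|$ of $\nat$). The key point is that such an $a$ exists and is unique as a minimal element: existence is trivial since $a=n$ works; minimality/uniqueness is where pullbacks enter. If $a$ and $a'$ both have $\sigma$ in the range of the corresponding inclusion maps, then the inclusion $a\cap a'\hookrightarrow n$ factors the pullback of $\iota_a$ and $\iota_{a'}$ (whose underlying order is exactly $a\cap a'$), and applying $D$ — which preserves this pullback — one gets that $\sigma$ lies in $\rng(D(\iota_{a\cap a'}))$ as well. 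Hence the collection of such $a$ is closed under intersection, so it has a least element, which we take as $\supp_n(\sigma)$. Naturality of $\supp$ (i.e.\ $\supp_n(D(f)(\sigma))=[f]^{<\omega}(\supp_m(\sigma))$) and the defining property "$\supp_n(\sigma)\subseteq\rng(f)\Rightarrow\sigma\in\rng(D(f))$" then both follow by factoring $f$ through its range and chasing the relevant pullback squares; I expect these to be routine diagram chases once the minimality argument is in place. One must also check that this can be carried out in $\rca_0$: the set $\supp$ is $\Delta^0_1$-definable from $D$ since "$\sigma\in\rng(D(\iota_a))$" is a bounded condition (the witness in $D(|a|)$ is bounded because $D(|a|)$ is an initial segment of $\mathbb N$ and $D(\iota_a)$ is increasing), so minimizing over finite $a\subseteq n$ is unproblematic.

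For (ii)$\Rightarrow$(i): suppose $\supp$ as in (ii) exists. To show $D$ preserves the pullback $P=\rng(f)\cap\rng(g)$ of $f:m\to n$ and $g:k\to n$, I would take $\sigma\in D(m)$ and $\tau\in D(k)$ with $D(f)(\sigma)=D(g)(\tau)=:\rho\in D(n)$ and produce a unique $\pi\in D(P)$ mapping to $\sigma$ and $\tau$. By naturality, $\supp_n(\rho)=[f]^{<\omega}(\supp_m(\sigma))\subseteq\rng(f)$ and likewise $\supp_n(\rho)\subseteq\rng(g)$, so $\supp_n(\rho)\subseteq\rng(f)\cap\rng(g)=\rng(p)$ where $p:P\hookrightarrow n$ is the pullback leg into $n$; by the defining property of $\supp$, $\rho\in\rng(D(p))$, say $\rho=D(p)(\pi)$. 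Writing $f=p\circ f'$ and $g=p\circ g'$ for the other two pullback legs $f':P\to m$, $g':P\to k$, one gets $D(p)(D(f')(\pi))=D(f)(\pi')$-type equalities that, together with injectivity of $D(p)$ (it is an order embedding, since $p$ is and $D$ lands in $\lo$), force $D(f')(\pi)=\sigma$ and $D(g')(\pi)=\tau$; uniqueness of $\pi$ is again by injectivity of $D(f')$ (or of $D(p)$). Finally, the uniqueness-of-$\supp$ clause follows by observing that $\supp_n(\sigma)$ is forced to be the least $a$ with $\sigma\in\rng(D(\iota_a))$: any natural transformation satisfying (ii) has $\supp_n(\sigma)\subseteq a$ whenever $\sigma\in\rng(D(\iota_a))$ (by naturality applied to $\iota_a$ and the support-preservation property run backwards), and conversely $\sigma\in\rng(D(\iota_{\supp_n(\sigma)}))$ by (ii) applied with $f=\iota_{\supp_n(\sigma)}$, pinning it down exactly.

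The main obstacle is the minimality argument in (i)$\Rightarrow$(ii): getting the support to be genuinely the \emph{least} finite set, rather than just \emph{some} finite set, requires the closure-under-intersection property, and this is precisely the content of pullback preservation (preservation of finite products alone would not suffice). Everything else — naturality, the implication in (ii), the reconstruction of pullbacks in (ii)$\Rightarrow$(i), and the $\rca_0$ bookkeeping — is a sequence of diagram chases using only that morphisms of $\nat$ are order embeddings and that every morphism factors as a surjection onto its range followed by an inclusion.
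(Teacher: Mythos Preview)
Your proposal is correct and follows essentially the same approach as the paper: defining $\supp_n(\sigma)$ via the minimal subset $a\subseteq n$ with $\sigma\in\rng(D(\iota_a))$, using pullback preservation to get closure under intersection (the paper phrases this as uniqueness of the map at minimal cardinality, which is the same diagram chase), and then verifying (ii)$\Rightarrow$(i) by showing $\supp_n(\rho)\subseteq\rng(f)\cap\rng(g)$ and invoking the support condition. One notational slip: in your (ii)$\Rightarrow$(i) you write ``$f=p\circ f'$'' where you mean ``$p=f\circ f'$'' (with $f':P\to m$ the pullback projection and $p=f\circ f'=g\circ g':P\to n$ the common composite), but your subsequent use of injectivity of $D(f)$ to conclude $D(f')(\pi)=\sigma$ shows you have the correct picture in mind.
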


The proposition is essentially implicit in~\cite[Theorem~2.3.12]{girard-pi2}. Since our setting is somewhat different, we have decided to provide a proof nevertheless, which the reader can find in Appendix~\ref{appendix:pullback-support} to the present paper. Using Proposition~\ref{prop:pullback-nat-transf}, it is straightforward to verify that the binary representation of numbers gives rise to a Goodstein dilator, as promised in the introduction:

\begin{example}\label{ex:2-Goodstein}
Any natural number below~$2^n$ can be uniquely written in the form $2^{n_0}+\dots+2^{n_{k-1}}$ with $n>n_0>\ldots>n_{k-1}$ (take $k=0$ to represent zero). In the present example we always assume that numbers are written in this form, i.\,e.~with descending exponents. The lexicographic order between the lists of exponents coincides with the usual order between the represented numbers. We can thus define a functor $2:\nat\to\lo$ by setting
\begin{align*}
2(n)&:=\{0,\dots,2^n-1\},\\
2(f)(2^{n_0}+\dots+2^{n_{k-1}})&:=2^{f(n_0)}+\dots+2^{f(n_{k-1})}\quad\text{(for a morphism $f:n\to m$)}.
\end{align*}
In order to show that we have a Goodstein dilator, we must prove that our functor preserves pullbacks. To apply the criterion from Proposition~\ref{prop:pullback-nat-transf}, we define a family of functions $\supp_n:2(n)\to[n]^{<\omega}$ by setting
\begin{equation*}
\supp_n(2^{n_0}+\dots+2^{n_{k-1}}):=\{n_0,\ldots,n_{k-1}\}.
\end{equation*}
Naturality is readily verified. Now assume we have $\supp_n(2^{n_0}+\dots+2^{n_{k-1}})\subseteq\rng(f)$ for a morphism $f:m\to n$. For $i<k$ we can define $m_i$ by stipulating $f(m_i)=n_i$. Since the morphism $f$ is order preserving, we have $m>m_0>\ldots>m_{k-1}$. This means that $2^{m_0}+\dots+2^{m_{k-1}}\in 2(m)$ is represented in the required form. By construction, we get
\begin{equation*}
2^{n_0}+\dots+2^{n_{k-1}}=2(f)(2^{m_0}+\dots+2^{m_{k-1}})\in\rng(2(f)),
\end{equation*}
as required for Proposition~\ref{prop:pullback-nat-transf}. Let us also observe that we have $0\in\{0\}=2(0)$ and $2(f)(0)=0$ for any morphism~$f$, so that condition~(iii) from Definition~\ref{def:Goodstein-dilator} is satisfied. In the introduction we have, on the one hand, given an ad hoc definition of Goodstein sequences $G_{b,c,m}^{2}(0),G_{b,c,m}^{2}(1),\dots$ for the binary representation. On the other hand, Definition~\ref{def:gen-Goodstein-sequence} yields Goodstein sequences $G_{b,c,m}^{D}(0),G_{b,c,m}^{D}(1),\dots$ relative to an arbitrary Goodstein dilator~$D$. One readly checks that the two definitions coincide when $D$ is the Goodstein dilator~$2$ that we have just defined.
\end{example}

Our next objective is the extension of a Goodstein dilator~$D:\nat\to\lo$ into a functor $\overline D:\lo\to\lo$. The crucial idea (due to~\cite{girard-pi2}) is to view a given linear order~$X$ as the direct limit of its finite suborders~$a\in[X]^{<\omega}$. Let us write $|a|\in\mathbb N$ for the cardinality of such an order. Intuitively speaking, the unique isomorphism $a\cong |a|=\{0,\dots,|a|-1\}$ (where the right side is ordered as usual) should induce an isomorphism $\overline D(a)\cong D(|a|)$. Note that $\overline D(a)$ is not officially explained yet, while $D(|a|)$ is defined when $D$ is a Goodstein dilator. Since $\overline D$ is to be a functor, the inclusion $a\hookrightarrow X$ should give rise to a morphism $D(|a|)\cong\overline D(a)\to\overline D(X)$. Now the idea is to define $\overline D(X)$ as the direct limit over these morphisms. Concretely, we will define the underlying set of $\overline D(X)$ as a collection of pairs~$(a,\sigma)$ with $a\in[X]^{<\omega}$ and $\sigma\in D(|a|)$. Intuitively speaking, such a pair represents the image of $\sigma$ under the indicated morphism.

The set $\overline D(X)$ should not, however, contain all pairs of the indicated form. To explain why, we introduce some notation that will also be needed below: Given an embedding $f:a\to b$ between finite orders, let us write $|f|:|a|\to|b|$ for the unique morphism that makes the following diagram commute:
\begin{equation*}
\begin{tikzcd}[row sep=large]
a\ar[d,"\cong"]\ar[r,"f"] & b\ar[d,"\cong"]\\
{|a|=\{0,\dots,|a|-1\}}\ar[r,"{|f|}"] & {|b|=\{0,\dots,|b|-1\}}
\end{tikzcd}
\end{equation*}
We also agree to write $\iota_a^b:a\hookrightarrow b$ for the inclusion between suborders $a\subseteq b\in[X]^{<\omega}$. Intuively speaking, the following diagram should commute:
\begin{equation*}
\begin{tikzcd}[row sep=tiny]
D({|a|})\cong\overline D(a)\ar[rd]\ar[dd,"{D(|\iota_a^b|)}",shift right=1.5em,swap] & \\
& \overline D(X)\\
D({|b|})\cong\overline D(b)\ar[ru] &
\end{tikzcd}
\end{equation*}
In terms of our representations, this means that the pairs $(a,\sigma)$ and $(b,D(|\iota_a^b|)(\sigma))$ should correspond to the same element of~$\overline D(X)$. In search for a criterion that excludes the ``superfluous" representation $(b,D(|\iota_a^b|)(\sigma))$, we observe the following: Let $\supp:D\Rightarrow[\cdot]^{<\omega}$ be the natural transformation provided by Proposition~\ref{prop:pullback-nat-transf}. If we have $a\subsetneq b$, then $|\iota_a^b|$ is not surjective, so that we get
\begin{equation*}
\supp_{|b|}(D(|\iota_a^b|)(\sigma))=[|\iota_a^b|]^{<\omega}\circ\supp_{|a|}(\sigma)\subseteq\rng(|\iota_a^b|)\subsetneq |b|=\{0,\dots,|b|-1\}.
\end{equation*}
On an intuitive level, the fact that we have a proper inclusion means that~$b$ is bigger than required. We will see that the opposed condition $\supp_{|a|}(\sigma)=|a|$ in the following definition suffices to guarantee unique representations. The following coincides with~\cite[Definition~2.2]{freund-computable}.

\begin{definition}[$\rca_0$]\label{def:dil-extend}
Consider a Goodstein dilator~$D$. For each linear order~$X$, we define a set $\overline D(X)$ by
\begin{equation*}
\overline D(X):=\{(a,\sigma)\,|\,a\in[X]^{<\omega}\text{ and }\sigma\in D(|a|)\text{ with }\supp_{|a|}(\sigma)=|a|\},
\end{equation*}
where $\supp:D\Rightarrow[\cdot]^{<\omega}$ is the natural transformation from Proposition~\ref{prop:pullback-nat-transf}. To define a binary relation $<_{\overline D(X)}$ on the set $\overline D(X)$, we stipulate
\begin{equation*}
(a,\sigma)<_{\overline D(X)}(b,\tau)\quad:\Leftrightarrow\quad D(|\iota_a^{a\cup b}|)(\sigma)<_{D(|a\cup b|)}D(|\iota_b^{a\cup b}|)(\tau).
\end{equation*}
Given an order embedding~$f:X\to Y$, we define a map $\overline D(f):\overline D(X)\to\overline D(Y)$ by
\begin{equation*}
\overline D(f)((a,\sigma)):=([f]^{<\omega}(a),\sigma),
\end{equation*}
which is permitted in view of $|[f]^{<\omega}(a)|=|a|$.
\end{definition}

The following result does, in particular, assert that $(\overline D(X),<_{\overline D(X)})$ is a linear order whenever the same holds for $X=(X,<_X)$.

\begin{proposition}[$\rca_0$]
If~$D$ is a Goodstein dilator, then the constructions from Definition~\ref{def:dil-extend} yield a functor~$\overline D:\lo\to\lo$.
\end{proposition}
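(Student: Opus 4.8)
The plan is to verify in turn that $\overline D$ sends linear orders to linear orders, that it sends order embeddings to order embeddings, and that it preserves identities and composition. All three parts are elementary once the right lemma is isolated, and the whole argument is easily formalised in $\rca_0$, since $\overline D(X)$ and $\overline D(f)$ are recursive in $D$, $X$, $f$ and the support transformation of Proposition~\ref{prop:pullback-nat-transf}. The technical core is a \emph{coherence lemma}: for finite suborders $a,b\subseteq c\in[X]^{<\omega}$ and elements $\sigma\in D(|a|)$, $\tau\in D(|b|)$, one has $D(|\iota_a^{a\cup b}|)(\sigma)<_{D(|a\cup b|)}D(|\iota_b^{a\cup b}|)(\tau)$ if and only if $D(|\iota_a^c|)(\sigma)<_{D(|c|)}D(|\iota_b^c|)(\tau)$, and likewise with $=$ in place of $<$. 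This is proved by factoring $\iota_a^c=\iota_{a\cup b}^c\circ\iota_a^{a\cup b}$ (and the analogue for $b$), applying functoriality of $|\cdot|$ and of $D$ to rewrite $D(|\iota_a^c|)=D(|\iota_{a\cup b}^c|)\circ D(|\iota_a^{a\cup b}|)$, and using that $D(|\iota_{a\cup b}^c|)$, being a morphism of $\lo$, is an order embedding and hence reflects and preserves both $<$ and $=$.

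Alongside the coherence lemma I would record that $D(|\iota_a^c|)(\sigma)=D(|\iota_b^c|)(\tau)$ forces $(a,\sigma)=(b,\tau)$: by naturality of $\supp$ the common value $\rho$ satisfies $\supp_{|c|}(\rho)=[|\iota_a^c|]^{<\omega}(\supp_{|a|}(\sigma))=\rng(|\iota_a^c|)$ and symmetrically $\supp_{|c|}(\rho)=\rng(|\iota_b^c|)$, so $a$ and $b$ have the same image in $c$, whence $a=b$ as subsets of $X$; then $|\iota_a^c|=|\iota_b^c|$ and injectivity of $D(|\iota_a^c|)$ gives $\sigma=\tau$. Here the condition $\supp_{|a|}(\sigma)=|a|$ built into Definition~\ref{def:dil-extend} is exactly what makes this argument work. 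Granting these two facts, linearity of $<_{\overline D(X)}$ is routine: irreflexivity uses $\iota_a^a=\mathrm{id}$; for trichotomy one takes $c=a\cup b$ and translates the trichotomy of the linear order $D(|c|)$ between $D(|\iota_a^c|)(\sigma)$ and $D(|\iota_b^c|)(\tau)$ back through the coherence lemma and the uniqueness-of-representatives fact; and for transitivity one moves $(a,\sigma)<(b,\tau)<(c,\rho)$ into the common refinement $d=a\cup b\cup c$, applies transitivity in $D(|d|)$, and transports the conclusion back.

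For the morphism part, fix an embedding $f:X\to Y$ and $a\in[X]^{<\omega}$. Then $f$ restricts to an isomorphism $a\cong[f]^{<\omega}(a)$, so $|[f]^{<\omega}(a)|=|a|$ and the induced morphism of $\nat$ is $\mathrm{id}_{|a|}$; consequently the condition $\supp_{|a|}(\sigma)=|a|$ is inherited and $\overline D(f)((a,\sigma))=([f]^{<\omega}(a),\sigma)$ indeed lies in $\overline D(Y)$. Injectivity of $\overline D(f)$ follows because $[\cdot]^{<\omega}$ is injective on $[X]^{<\omega}$ when $f$ is injective. For order-preservation, write $c=a\cup b$, note $[f]^{<\omega}(c)=[f]^{<\omega}(a)\cup[f]^{<\omega}(b)$, and observe that since $f$ restricts to isomorphisms $a\cong[f]^{<\omega}(a)$, $b\cong[f]^{<\omega}(b)$ and $c\cong[f]^{<\omega}(c)$ one obtains $|\iota_{[f]^{<\omega}(a)}^{[f]^{<\omega}(c)}|=|\iota_a^c|$ and the analogue for $b$; hence the comparison defining $\overline D(f)((a,\sigma))<_{\overline D(Y)}\overline D(f)((b,\tau))$ is literally the comparison defining $(a,\sigma)<_{\overline D(X)}(b,\tau)$. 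Functoriality is then immediate from $[\mathrm{id}_X]^{<\omega}(a)=a$ and $[g\circ f]^{<\omega}=[g]^{<\omega}\circ[f]^{<\omega}$, applied to the defining formula $\overline D(h)((a,\sigma))=([h]^{<\omega}(a),\sigma)$.

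The only genuinely non-routine step, and the one I expect to carry the weight of the proof, is the coherence lemma together with the uniqueness of representatives; everything else is bookkeeping with the three functors $D$, $|\cdot|$ and $[\cdot]^{<\omega}$. It is worth noting that pullback preservation of $D$ enters only through Proposition~\ref{prop:pullback-nat-transf}, i.e.\ through the existence, naturality and range property of $\supp$, whereas condition~(iii) of Definition~\ref{def:Goodstein-dilator} is not needed here at all.
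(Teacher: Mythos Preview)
Your proposal is correct and follows essentially the same approach as the paper. The paper in fact defers most of the verification to \cite[Lemma~2.2]{freund-computable} and only spells out the uniqueness-of-representatives step (your second recorded fact, with $c=a\cup b$) as the ``particularly instructive part''; your coherence lemma and the remaining checks are exactly the details one would fill in, and your observation that condition~(iii) of Definition~\ref{def:Goodstein-dilator} is not needed here is accurate.
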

\begin{proof}
In~\cite[Lemma~2.2]{freund-computable}, the same result has been shown in a stronger base theory. It is straightforward to check that the argument goes through in~$\rca_0$ as well. We only recall one particularly instructive part of the argument: The claim that $<_{\overline D(X)}$ is trichotomous is readily reduced to the implication
\begin{equation*}
D(|\iota_a^{a\cup b}|)(\sigma)=D(|\iota_b^{a\cup b}|)(\tau)\quad\Rightarrow\quad (a,\sigma)=(b,\tau).
\end{equation*}
Assuming that the antecendent holds, we can use the condition $\supp_{|a|}(\sigma)=|a|$ from Definition~\ref{def:dil-extend} and naturality to deduce
\begin{multline*}
\rng(|\iota_a^{a\cup b}|)=[|\iota_a^{a\cup b}|]^{<\omega}\circ\supp_{|a|}(\sigma)=\supp_{|a\cup b|}\circ D(|\iota_a^{a\cup b}|)(\sigma)=\\
=\supp_{|a\cup b|}\circ D(|\iota_b^{a\cup b}|)(\tau)=[|\iota_b^{a\cup b}|]^{<\omega}\circ\supp_{|b|}(\tau)=\rng(|\iota_b^{a\cup b}|).
\end{multline*}
By the definition of $|\cdot|$, the diagram
\begin{equation*}
\begin{tikzcd}[row sep=large,column sep=large]
a\ar[d,"\cong",swap]\ar[r,"\iota_a^{a\cup b}"] & a\cup b\ar[d,"\cong"] & b\ar[d,"\cong"]\ar[l,"\iota_b^{a\cup b}",swap]\\
{|a|}\ar[r,"{|\iota_a^{a\cup b}|}",swap] & {|a\cup b|} & {|b|}\ar[l,"{|\iota_b^{a\cup b}|}"]
\end{tikzcd}
\end{equation*}
commutes. One readily infers $|a\cup b|=\rng(|\iota_a^{a\cup b}|)\cup\rng(|\iota_b^{a\cup b}|)$. Together with the above we get $\rng(|\iota_a^{a\cup b}|)=|a\cup b|=\rng(|\iota_b^{a\cup b}|)$, which entails $|a|=|a\cup b|=|b|$ and hence $a=b$. It follows that $|\iota_a^{a\cup b}|=|\iota_b^{a\cup b}|$ and hence $D(|\iota_a^{a\cup b}|)=D(|\iota_b^{a\cup b}|)$ are the identity (on $|a\cup b|$ and $D(|a\cup b|)$, respectively). Thus the antecedent of the above implication also yields $\sigma=\tau$, as required.
\end{proof}

Extending the constructions from Definition~\ref{def:dil-extend}, one can define a natural transformation $\supp:\overline D\to[\cdot]^{<\omega}$ by setting $\overline\supp_X((a,\sigma)):=a$. It is straightforward to verify an implication as in part~(ii) of Proposition~\ref{prop:pullback-nat-transf}. One can conclude that the functor~$\overline D:\lo\to\lo$ preserves direct limits as well as pullbacks. If it also preserves well foundedness, then it is a dilator in the usual sense (cf.~\cite{girard-pi2}). These additional observations are not strictly needed in the present paper. For details we refer to~\cite[Lemma~2.2]{freund-computable}. Let us now show that $\overline D$ is indeed an extension of~$D$.

\begin{proposition}[$\rca_0$]\label{prop:dil-extend}
For each Goodstein dilator~$D:\nat\to\lo$ there is a natural isomorphism~$\eta^D:\overline D\!\restriction\!\nat\Rightarrow D$, where $\overline D\!\restriction\!\nat$ denotes the restriction of the functor $\overline D:\lo\to\lo$ to the category of natural numbers.
\end{proposition}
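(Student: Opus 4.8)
The plan is to define the components $\eta^D_n\colon\overline D(n)\to D(n)$ explicitly and then verify, one after another, that each is an order isomorphism and that the family is natural in~$n$. Recall from Definition~\ref{def:dil-extend} that an element of $\overline D(n)$ is a pair $(a,\sigma)$ with $a\in[n]^{<\omega}$, $\sigma\in D(|a|)$ and $\supp_{|a|}(\sigma)=|a|$. Writing $\iota_a^n\colon a\hookrightarrow n$ for the inclusion and $|\iota_a^n|\colon|a|\to n$ for the associated morphism of~$\nat$ (concretely the increasing enumeration of $a$ inside $n$, so that $\rng(|\iota_a^n|)=a$), I would put
\begin{equation*}
\eta^D_n((a,\sigma)):=D(|\iota_a^n|)(\sigma)\in D(n).
\end{equation*}

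First I would check that $\eta^D_n$ is an order embedding. For $(a,\sigma),(b,\tau)\in\overline D(n)$ the commuting squares defining $|\cdot|$ yield $|\iota_a^n|=|\iota_{a\cup b}^n|\circ|\iota_a^{a\cup b}|$ and likewise with $b$ in place of $a$, so functoriality of $D$ gives $\eta^D_n((a,\sigma))=D(|\iota_{a\cup b}^n|)\big(D(|\iota_a^{a\cup b}|)(\sigma)\big)$ and the analogous identity for $(b,\tau)$. Since $D(|\iota_{a\cup b}^n|)$ is a morphism of $\lo$, hence an order embedding, the comparison $\eta^D_n((a,\sigma))<_{D(n)}\eta^D_n((b,\tau))$ is equivalent to $D(|\iota_a^{a\cup b}|)(\sigma)<_{D(|a\cup b|)}D(|\iota_b^{a\cup b}|)(\tau)$, and the latter is exactly the defining condition of $(a,\sigma)<_{\overline D(n)}(b,\tau)$; in particular $\eta^D_n$ is injective.

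The surjectivity of $\eta^D_n$ is the one step that uses Proposition~\ref{prop:pullback-nat-transf}, and I expect it to be the main (though still routine) point. Given $\rho\in D(n)$, set $a:=\supp_n(\rho)\in[n]^{<\omega}$. Then $\supp_n(\rho)=a=\rng(|\iota_a^n|)$, so Proposition~\ref{prop:pullback-nat-transf}(ii) applied to the morphism $|\iota_a^n|$ provides $\tau\in D(|a|)$ with $D(|\iota_a^n|)(\tau)=\rho$; this $\tau$ is unique because $D(|\iota_a^n|)$ is injective. Naturality of $\supp$ gives $[|\iota_a^n|]^{<\omega}(\supp_{|a|}(\tau))=\supp_n(\rho)=a=[|\iota_a^n|]^{<\omega}(|a|)$, and injectivity of $|\iota_a^n|$ forces $\supp_{|a|}(\tau)=|a|$; hence $(a,\tau)\in\overline D(n)$ with $\eta^D_n((a,\tau))=\rho$. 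Together with the previous paragraph this shows each $\eta^D_n$ is an isomorphism of linear orders.

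Finally, for naturality, let $f\colon m\to n$ be a morphism of $\nat$ and $(a,\sigma)\in\overline D(m)$. By Definition~\ref{def:dil-extend} we have $\overline D(f)((a,\sigma))=([f]^{<\omega}(a),\sigma)$, so I would reduce naturality to the identity $|\iota_{[f]^{<\omega}(a)}^n|=f\circ|\iota_a^m|$ of morphisms $|a|\to n$, which follows from the commuting squares defining $|\cdot|$ together with the fact that $f$ restricts to the (unique) order isomorphism $a\cong[f]^{<\omega}(a)$. Functoriality of $D$ then gives $\eta^D_n(\overline D(f)((a,\sigma)))=D(f\circ|\iota_a^m|)(\sigma)=D(f)\big(D(|\iota_a^m|)(\sigma)\big)=D(f)(\eta^D_m((a,\sigma)))$. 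No deeper obstacle is expected; the care required is mostly in keeping the various instances of $|\iota_\bullet^\bullet|$ and their composites straight, and in invoking Proposition~\ref{prop:pullback-nat-transf} correctly for the surjectivity step.
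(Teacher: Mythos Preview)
Your proposal is correct and follows essentially the same approach as the paper: you define $\eta^D_n$ via the increasing enumeration of $a$ inside $n$ (your $|\iota_a^n|$ is exactly the paper's $e_a^n$), reduce the order-embedding property to the definition of $<_{\overline D(n)}$ through the factorization $|\iota_a^n|=|\iota_{a\cup b}^n|\circ|\iota_a^{a\cup b}|$, obtain surjectivity from Proposition~\ref{prop:pullback-nat-transf} together with naturality of $\supp$, and establish naturality via $|\iota_{[f]^{<\omega}(a)}^n|=f\circ|\iota_a^m|$. The only differences are notational and in the order of presentation.
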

\begin{proof}
This has been shown in~\cite[Lemma~2.6]{freund-rathjen_derivatives}. We reproduce the most informative part of the argument: Let us recall that elements of $\overline D(n)$ have the form $(a,\sigma)$ with $a\subseteq n=\{0,\dots,n-1\}$ and $\sigma\in D(|a|)$. Writing $e_a^n:|a|\to n$ for the strictly increasing function with range~$a$, the component $\eta^D_n:\overline D(n)\to D(n)$ can be given~by
\begin{equation*}
\eta^D_n((a,\sigma)):=D(e_a^n)(\sigma).
\end{equation*}
In order to show that $\eta^D_n$ is surjective, we consider an arbitrary element~$\tau\in D(n)$. Let us set $a:=\supp_n(\tau)$, where $\supp:D\Rightarrow[\cdot]^{<\omega}$ is the natural transformation from Proposition~\ref{prop:pullback-nat-transf}. According to the latter, we get $\tau=D(e_a^n)(\sigma)$ for some $\sigma\in D(|a|)$. Due to naturality, we see
\begin{equation*}
[e_a^n]^{<\omega}(\supp_{|a|}(\sigma))=\supp_n(D(e_a^n)(\sigma))=\supp_n(\tau)=a,
\end{equation*}
which forces $\supp_{|a|}(\sigma)=a$. In view of Definition~\ref{def:dil-extend}, this yields $(a,\sigma)\in\overline D(n)$. By construction we have
\begin{equation*}
\tau=D(e_a^n)(\sigma)=\eta^D_n((a,\sigma))\in\rng(\eta^D_n),
\end{equation*}
as desired. To see that $(a,\sigma)<_{\overline D(n)}(b,\tau)$ implies $\eta^D_n(\sigma)<_{D(n)}\eta^D_n(\tau)$, one applies Definition~\ref{def:dil-extend} and observes that the following diagram commutes (which is the case since all arrows are of the form $|\iota|$ for an inclusion~$\iota$):
\begin{equation*}
\begin{tikzcd}[row sep=large]
{|a|}\ar[r,"{|\iota_a^{a\cup b}|}"]\ar[rd,"e_a^n",swap] & {|a\cup b|}\ar[d,"e_{a\cup b}^n"] & {|b|}\ar[l,"{|\iota_a^{a\cup b}|}",swap]\ar[ld,"e_b^n"]\\
& n={|n|} &
\end{tikzcd}
\end{equation*}
Naturality with respect to a morphism $f:m\to n$ is readily deduced from the fact that we have $f\circ e_a=e_{[f]^{<\omega}(a)}$ for $a\subseteq m$ (note that both functions enumerate the set $[f]^{<\omega}(a)\subseteq n$ in increasing order).
\end{proof}

It will also be convenient to know that the previous proposition determines $\overline D$ in the following sense:

\begin{proposition}[$\rca_0$]\label{prop:determine-extension}
Consider a Goodstein dilator~$D:\nat\to\lo$ and a functor $\widehat D:\lo\to\lo$. Assume that
\begin{enumerate}[label=(\roman*)]
\item there is a natural isomorphism between $D$ and the restriction of $\widehat D$ to the category of natural numbers, and
\item there is a natural transformation $\widehat\supp:\widehat D\Rightarrow[\cdot]^{<\omega}$ such that
\begin{equation*}
\widehat\supp_Y(\sigma)\subseteq\rng(f)\quad\Rightarrow\quad\sigma\in\rng(\widehat D(f))
\end{equation*}
holds for any order embedding $f:X\to Y$ and any element $\sigma\in D(Y)$.
\end{enumerate}
Then there is a natural isomorphism between~$\widehat D$ and the extension~$\overline D$ of~$D$.
\end{proposition}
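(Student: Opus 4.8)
The plan is to build, for each linear order~$X$, an explicit order isomorphism $\theta_X\colon\overline D(X)\to\widehat D(X)$ and then check that $\langle\theta_X\rangle_X$ is a natural transformation. Write $\nu\colon D\Rightarrow\widehat D\!\restriction\!\nat$ for the natural isomorphism of hypothesis~(i), and for $a\in[X]^{<\omega}$ let $e_a^X\colon|a|\to X$ be the strictly increasing function with range~$a$, so that $e_a^X$ is a morphism of~$\lo$ (this extends the notation $e_a^n$ from the proof of Proposition~\ref{prop:dil-extend}). Recalling that the elements of $\overline D(X)$ are the pairs $(a,\sigma)$ with $a\in[X]^{<\omega}$ and $\sigma\in D(|a|)$ satisfying $\supp_{|a|}(\sigma)=|a|$, I would define
\[
\theta_X((a,\sigma)):=\widehat D(e_a^X)(\nu_{|a|}(\sigma))\in\widehat D(X).
\]

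The first real step is an auxiliary identity: $\widehat\supp_n\circ\nu_n=\supp_n$ for every $n\in\nat$. One checks that $\langle\widehat\supp_n\circ\nu_n\rangle_{n\in\nat}$ is a natural transformation $D\Rightarrow[\cdot]^{<\omega}$ satisfying the lifting property of Proposition~\ref{prop:pullback-nat-transf}(ii): if $\widehat\supp_n(\nu_n(\sigma))\subseteq\rng(f)$ for a morphism $f\colon m\to n$ of $\nat$, then hypothesis~(ii), applied to $f$ viewed as an embedding of linear orders, gives $\nu_n(\sigma)\in\rng(\widehat D(f))$, and naturality of~$\nu$ upgrades this to $\sigma\in\rng(D(f))$. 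The uniqueness clause of Proposition~\ref{prop:pullback-nat-transf} then forces the identity.

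With this in hand the verification proceeds in the expected parts. \emph{Surjectivity:} given $\tau\in\widehat D(X)$, put $a:=\widehat\supp_X(\tau)$; since $\rng(e_a^X)=a\supseteq\widehat\supp_X(\tau)$, hypothesis~(ii) yields $\tau=\widehat D(e_a^X)(\rho)$ for some $\rho\in\widehat D(|a|)$, and naturality of $\widehat\supp$ along $e_a^X$ together with injectivity of $e_a^X$ forces $\widehat\supp_{|a|}(\rho)=|a|$; putting $\sigma:=\nu_{|a|}^{-1}(\rho)$, the auxiliary identity gives $\supp_{|a|}(\sigma)=|a|$, so $(a,\sigma)\in\overline D(X)$ and $\theta_X((a,\sigma))=\tau$. \emph{Injectivity and order preservation:} for $(a,\sigma),(b,\tau)\in\overline D(X)$ set $c:=a\cup b$ and note $e_c^X\circ|\iota_a^c|=e_a^X$ (both are strictly increasing with range~$a$), so naturality of~$\nu$ rewrites $\theta_X((a,\sigma))=\widehat D(e_c^X)(\nu_{|c|}(D(|\iota_a^c|)(\sigma)))$, and similarly for $(b,\tau)$; since $\widehat D(e_c^X)$ is an order embedding and $\nu_{|c|}$ an isomorphism, comparing $\theta_X((a,\sigma))$ and $\theta_X((b,\tau))$ in $\widehat D(X)$ reduces to comparing $D(|\iota_a^c|)(\sigma)$ and $D(|\iota_b^c|)(\tau)$ in $D(|c|)$, which is precisely the defining clause of $<_{\overline D(X)}$ in Definition~\ref{def:dil-extend} (the equality case giving injectivity). \emph{Naturality:} for an embedding $g\colon X\to Y$ one has $g\circ e_a^X=e_{[g]^{<\omega}(a)}^Y$, hence $\theta_Y(\overline D(g)((a,\sigma)))=\theta_Y(([g]^{<\omega}(a),\sigma))=\widehat D(g)(\widehat D(e_a^X)(\nu_{|a|}(\sigma)))=\widehat D(g)(\theta_X((a,\sigma)))$.

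I expect the main obstacle to be the auxiliary identification $\widehat\supp\circ\nu=\supp$ on~$\nat$ — not because it is deep, but because it is the one place where both hypotheses and the uniqueness in Proposition~\ref{prop:pullback-nat-transf} are all used at once; once it is in place, surjectivity (in particular the verification that $\supp_{|a|}(\sigma)=|a|$, so that $(a,\sigma)$ genuinely belongs to $\overline D(X)$) follows, and the remaining clauses are diagram chases through the evident commuting triangles for $e_a^X$. A minor point for the $\rca_0$ formalization is that $\theta_X$, the enumerations $e_a^X$, and the coding of $[X]^{<\omega}$ are all given by primitive recursive operations on codes, so no comprehension beyond $\rca_0$ is needed.
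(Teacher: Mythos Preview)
Your proposal is correct and follows essentially the same approach as the paper: the paper defines the very same map $\eta_X((a,\sigma)):=\widehat D(e_a^X)\circ\eta^0_{|a|}(\sigma)$ (your $\theta_X$ with $\nu=\eta^0$) and then says to argue as in the proof of Proposition~\ref{prop:dil-extend}, replacing appeals to Proposition~\ref{prop:pullback-nat-transf} by hypothesis~(ii). Your write-up is in fact more explicit than the paper's, particularly in isolating the auxiliary identity $\widehat\supp_n\circ\nu_n=\supp_n$ via the uniqueness clause of Proposition~\ref{prop:pullback-nat-transf}; this is exactly the missing link the paper leaves to the reader.
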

To be precise, we should point out that the functor $\widehat D:\lo\to\lo$ cannot be given as a subset of~$\mathbb N$ (not even if we restrict to countable orders). We assume that $\widehat D$ is given via $\Delta^0_1$-definitions of the relations $\sigma\in\widehat D(X)$, $\sigma<_{\widehat D(X)}\tau$ and $\widehat D(f)(\sigma)=\tau$. The following argument transforms these into a $\Delta^0_1$-definition of~$\eta_X(\sigma)=\tau$, where we write $\eta_X:\overline D(X)\to\widehat D(X)$ for the components of the desired isomorphism.
\begin{proof}
Assumption~(ii) ensures that $\widehat D$ is a prae-dilator in the sense of~\cite[Section~2]{freund-computable} (and a dilator if it preserves well foundedness). The result can now be obtained by combining Proposition~2.1 and Lemma~2.3 of~\cite{freund-computable}. To describe the desired isomorphism $\eta:\overline D\Rightarrow\widehat D$ more explicitly, we write $\eta^0:D\to\widehat D\!\restriction\!\nat$ for the natural isomorphism provided by assumption~(i). For a linear order~$X$, we define $e_a^X:|a|\to X$ as the embedding with range~$a\in[X]^{<\omega}$. We can now set
\begin{equation*}
\eta_X((a,\sigma)):=\widehat D(e_a^X)\circ\eta^0_{|a|}(\sigma).
\end{equation*}
To verify the required properties, one argues as in the proof of Proposition~\ref{prop:dil-extend}. Where the latter refers to Proposition~\ref{prop:pullback-nat-transf}, one now uses assumption~(ii) of the present proposition.
\end{proof}

Our next goal is to describe the extension $\overline 2:\lo\to\lo$ of the Goodstein dilator~$2:\nat\to\lo$ from Example~\ref{ex:2-Goodstein}. The condition~$\supp_{|a|}(\sigma)=|a|$ from Definition~\ref{def:dil-extend} amounts to
\begin{equation*}
\{n_0,\dots,n_{k-1}\}=\supp_n(2^{n_0}+\dots+2^{n_{k-1}})=n=\{0,\dots,n-1\},
\end{equation*}
which requires $n=k$ and $n_i=k-1-i$ (note $n_0>\dots>n_{k-1}$). This means that elements of $\overline 2(X)$ are of the form $(a,2^{k-1}+\dots+2^0)=(a,2^k-1)$, where $a\in[X]^{<\omega}$ has $k$ elements. For $a=\{x_0,\dots,x_{k-1}\}$ with $x_{k-1}<_X\dots<_X x_0\in X$, the pair $(a,2^k-1)$ does intuitively correspond to the element $\langle x_0,\dots,x_{k-1}\rangle$ of the order~$2^X$ from the introduction. To justify this claim in detail, we would need to analyse the order relation~$<_{\overline 2(X)}$ that is determined by Definition~\ref{def:dil-extend}. This is tedious, even in the relatively simple case at hand. Fortunately, Proposition~\ref{prop:determine-extension} suggests an alternative approach:

\begin{example}\label{ex:2-Goodstein-extend}
Given a linear order~$X=(X,<_X)$, we consider the set
\begin{equation*}
\widehat 2(X):=\{\langle x_0,\dots,x_{k-1}\rangle\,|\,x_0,\dots,x_{k-1}\in X\text{ with }x_{k-1}<_X\ldots <_X x_0\}
\end{equation*}
with the lexicographic order, which is given by
\begin{equation*}
\langle x_0,\dots,x_{k-1}\rangle<_{\widehat 2(X)}\langle x'_0,\dots,x'_{m-1}\rangle\quad\Leftrightarrow\quad
\begin{cases}
\text{either $x_i=x'_i$ for all $i<k<m$,}\\[.5ex]
\parbox[t]{.41\textwidth}{or there is a $j<\min\{k,m\}$ with $x_j<_Xx'_j$ and $x_i=x'_i$ for all $i<j$.}
\end{cases}
\end{equation*}
In the introduction we have denoted the same order by~$2^X$, which is the most common notation in the literature. By changing the notation to $\widehat 2(X)$, we aim to distinguish the present construction from the one in Example~\ref{ex:2-Goodstein}. Given an order embedding $f:X\to Y$, we can define an embedding $\widehat 2(f):\widehat 2(X)\to\widehat 2(Y)$ by setting
\begin{equation*}
\widehat 2(f)(\langle x_0,\dots,x_{k-1}\rangle):=\langle f(x_0),\dots,f(x_{k-1})\rangle.
\end{equation*}
It is straightforward to see that this turns $\widehat 2:\lo\to\lo$ into a functor. The restriction of this functor to the category of natural numbers is isomorphic to the functor $2:\nat\to\lo$ from Example~\ref{ex:2-Goodstein}, as witnessed by the maps
\begin{equation*}
2(n)\ni 2^{n_0}+\dots+2^{n_{k-1}}\mapsto\langle n_0,\dots,n_{k-1}\rangle\in\widehat 2(n).
\end{equation*}
Let us point out that the argument $2^{n_0}+\dots+2^{n_{k-1}}$ is a natural number (rather than a term), which is represented according to the convention $n_0>\dots>n_{k-1}$ that we have fixed in Example~\ref{ex:2-Goodstein}. Now define a family of functions $\widehat\supp_X:\widehat 2(X)\to[X]^{<\omega}$ by setting
\begin{equation*}
\widehat\supp_X(\langle x_0,\dots,x_{k-1}\rangle):=\{x_0,\dots,x_{k-1}\}.
\end{equation*}
It is straightforward to see that the condition from Proposition~\ref{prop:determine-extension} is satisfied. We can thus conclude that there is a natural isomorphism between the functor $\widehat 2:\lo\to\lo$ and the extension $\overline 2:\lo\to\lo$ of the Goodstein dilator $2:\nat\to\lo$ from Example~\ref{ex:2-Goodstein}. In particular, the map $X\mapsto\widehat 2(X)$ preserves well foundedness (i.\,e.~is a dilator) if, and only if, the map $X\mapsto\overline 2(X)$ does, provably in $\rca_0$.
\end{example}

Let us now come to the main result of this section, which was already stated in the introduction. We refer to Definitions~\ref{def:Goodstein-system} and~\ref{def:gen-Goodstein-sequence} for the notion of Goodstein system and the general Goodstein sequences $G_{b,c,m}^D(0),G_{b,c,m}^D(1),\dots$, respectively.

\begin{theorem}\label{thm:Goodstein-dilator}
The theory $\rca_0$ proves that the following are equivalent for any Goodstein dilator~$D$:
\begin{enumerate}[label=(\roman*)]
\item the extended Goodstein theorem for~$D$: for any Goodstein system $(b,c)$ and any $m\in D(b(0))$ there is an $i\in\mathbb N$ with $G_{b,c,m}^D(i)=0$,
\item the extension $\overline D:\lo\to\lo$ of $D:\nat\to\lo$ preserves well foundedness, i.\,e.~the order $\overline D(X)$ is well founded for any well order~$X$.
\end{enumerate}
\end{theorem}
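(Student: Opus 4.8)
The plan is to prove the two implications separately, using the extension $\overline D$ and the direct limit associated to a Goodstein system as the bridge between the ``finitary'' statement~(i) and the ``infinitary'' statement~(ii).

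\medskip

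\textbf{From (ii) to (i).} Suppose $\overline D$ preserves well foundedness, and let a Goodstein system $(b,c)$ and $m\in D(b(0))$ be given. The key construction is the direct limit $X=X_{b,c}$ of the system of finite orders $b(0)\xrightarrow{c_0}b(1)\xrightarrow{c_1}b(2)\to\cdots$; concretely one may take $X$ to be the set of pairs $(i,n)$ with $n<b(i)$, modulo the equivalence generated by $(i,n)\sim(i+1,c_i(n))$, ordered in the obvious way. First I would verify, using the defining property of a Goodstein system (Definition~\ref{def:Goodstein-system}), that $X$ is well founded: a descending sequence in $X$ would, after choosing representatives, produce a function $d$ with $d(i)<b(i)$ for which no $i<j$ satisfies $c_{ij}(d(i))\le d(j)$, contradicting the hypothesis. (Here one has to be a little careful about the stage at which each term of the descending sequence ``lives'' and about passing to an infinite subset $Y$; $\rca_0$ suffices since the descending sequence is given as a set.) By the hypothesis~(ii), $\overline D(X)$ is well founded. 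Next I would transport the Goodstein sequence into $\overline D(X)$: let $\kappa_i:b(i)\to X$ be the canonical map into the direct limit, so that $\overline D(\kappa_i):\overline D(b(i))\to\overline D(X)$ makes sense, and, composing with the isomorphism $\eta^D_{b(i)}:\overline D(b(i))\xrightarrow{\cong} D(b(i))$ of Proposition~\ref{prop:dil-extend}, define $x_i:=\overline D(\kappa_i)\bigl((\eta^D_{b(i)})^{-1}(G^D_{b,c,m}(i))\bigr)\in\overline D(X)$. Because $\kappa_{i+1}\circ c_i=\kappa_i$ and $\overline D$ is a functor, one computes that $x_i$ is the image of $D(c_i)(G^D_{b,c,m}(i))$, while $x_{i+1}$ is the image of $G^D_{b,c,m}(i+1)=D(c_i)(G^D_{b,c,m}(i))-1$; hence $x_{i+1}<_{\overline D(X)}x_i$ as long as the Goodstein sequence has not yet reached $0$. (To get strict descent one needs that the embedding $\overline D(\kappa_i)$ is order preserving and that subtracting one strictly decreases, which was noted after Definition~\ref{def:gen-Goodstein-sequence}; one also uses condition~(iii) of Definition~\ref{def:Goodstein-dilator} to see that once the sequence is $0$ it corresponds to the fixed bottom element, so no spurious descent occurs.) Since $\overline D(X)$ is well founded, the descent must stop, i.e.\ $G^D_{b,c,m}(i)=0$ for some $i$.

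\medskip

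\textbf{From (i) to (ii).} Conversely, assume~(i) and let $X$ be an arbitrary well order; I want $\overline D(X)$ well founded, so suppose toward a contradiction that $\langle \tau_i\rangle_{i\in\mathbb N}$ is an infinite descending sequence in $\overline D(X)$. Each $\tau_i$ has the form $(a_i,\sigma_i)$ with $a_i\in[X]^{<\omega}$; let $a=\bigcup_{i} a_i$, which need not be finite, but is a well order (a suborder of $X$) that is an increasing union of its finite suborders. Enumerating a suitable increasing chain of finite suborders $a_0\cup\cdots\cup a_i\subseteq$ (something of size $b(i)$), one manufactures a Goodstein system: take $b(i)$ to be the size of a chosen finite suborder $X_i\subseteq X$ containing $a_0,\dots,a_i$, and let $c_i:b(i)\to b(i+1)$ be $|\iota|$ for the inclusion $X_i\hookrightarrow X_{i+1}$. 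One must check that $(b,c)$ really is a Goodstein system, i.e.\ that the direct limit has no descending sequence — but that direct limit embeds into (indeed is isomorphic to) the suborder $\bigcup_i X_i$ of the well order $X$, so this is immediate. Then, transporting the $\tau_i$ back through the isomorphisms $\eta^D$, one obtains elements of $D(b(i))$; the subtlety is that the descending sequence $\tau_i$ is \emph{not literally} of the form of a Goodstein sequence (which must subtract exactly one at each step), so one argues instead that some Goodstein sequence $G^D_{b',c',m}$ (after possibly passing to a faster-growing $b'$, i.e.\ inserting extra stages so that enough ``room'' is available to subtract one repeatedly and still stay above the next $\tau$) \emph{majorizes} the $\tau_i$ in the sense that at each stage the Goodstein value is $\geq$ the transported $\tau_i$. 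Since by~(i) the Goodstein sequence eventually hits $0$, the majorized sequence is eventually $0$ as well, contradicting that $\langle\tau_i\rangle$ is strictly descending (hence never reaches and stays at the bottom, using condition~(iii) again). This yields~(ii).

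\medskip

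\textbf{Main obstacle.} The genuinely delicate direction is (i)$\Rightarrow$(ii). Two points require care. First, inserting extra stages into $b$ so that the Goodstein sequence, which is only allowed to subtract one per step, can ``keep up with'' an arbitrary descending sequence in $\overline D(X)$: one needs a uniform bound ensuring finitely many subtractions suffice to drop from $\eta^{-1}(\tau_i)$ to below $\eta^{-1}(\tau_{i+1})$, which follows because these are genuine natural numbers in the initial segments $D(b(i))$ of $\mathbb N$ (condition~(i) of Definition~\ref{def:Goodstein-dilator}) — this is exactly where finiteness/initial-segment-of-$\mathbb N$ is used, and it should be organized carefully so the whole construction is $\rca_0$-definable from the given descending sequence and well order. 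Second, one must make sure the manufactured $(b,c)$ is a bona fide Goodstein system, i.e.\ handle the universal quantifier over $d$ and $Y$ in Definition~\ref{def:Goodstein-system}; as indicated, this reduces to well foundedness of a suborder of $X$ and hence to the hypothesis that $X$ is a well order, but spelling out the translation between ``no descending sequence in the direct limit'' and ``the combinatorial condition on $d,Y$'' is the technical heart and should be isolated as a lemma (perhaps reused in the (ii)$\Rightarrow$(i) direction). The remaining bookkeeping — commutativity of the $\kappa$'s with the $c$'s, compatibility of $\eta^D$ with morphisms, the bottom-element argument from condition~(iii) — is routine given Propositions~\ref{prop:dil-extend} and~\ref{prop:determine-extension}.
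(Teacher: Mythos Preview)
Your overall architecture matches the paper's: (ii)$\Rightarrow$(i) via the direct limit of the Goodstein system and transporting the Goodstein sequence into $\overline D(X)$; (i)$\Rightarrow$(ii) by building a Goodstein system out of the finite supports $a_i$ of a putative descending sequence in~$\overline D(X)$ and showing that the associated Goodstein sequence majorizes it. That is exactly what the paper does.

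Where you diverge is in your assessment of the ``main obstacle'' in (i)$\Rightarrow$(ii). You propose to insert extra stages into $b$ so that the Goodstein sequence, subtracting only one per step, can ``keep up with'' the descending sequence. This is unnecessary, and the paper does not do it. The point is that majorization goes the \emph{easy} way: if $D(|\iota_i|)(\sigma_i)\le G^D_{b,c,m}(i)$ and $(a_i,\sigma_i)>_{\overline D(X)}(a_{i+1},\sigma_{i+1})$, then after pushing forward by $D(c_i)$ one has a \emph{strict} inequality $D(|\iota_{i+1}|)(\sigma_{i+1})<D(c_i\circ|\iota_i|)(\sigma_i)$ inside the initial segment $D(b(i+1))\subseteq\mathbb N$, and strict inequality between natural numbers gives $D(|\iota_{i+1}|)(\sigma_{i+1})\le D(c_i\circ|\iota_i|)(\sigma_i)-1\le D(c_i)(G^D_{b,c,m}(i))-1=G^D_{b,c,m}(i+1)$. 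One subtraction per step is already the \emph{slowest} possible descent, so the Goodstein sequence automatically stays above any strictly descending sequence; no padding of $b$ is needed. The paper isolates this as a Claim proved by a one-line induction.

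A smaller point: condition~(iii) of Definition~\ref{def:Goodstein-dilator} is not used in (ii)$\Rightarrow$(i); once $G^D_{b,c,m}(i)=0$ the recursion simply stays at~$0$ and there is nothing to check. Where (iii) \emph{is} genuinely needed is at the end of (i)$\Rightarrow$(ii): when the Goodstein sequence first hits~$0$ one has $D(|\iota_i|)(\sigma_i)=0$, and condition~(iii) is what lets you conclude $D(c_i\circ|\iota_i|)(\sigma_i)=D(\emp_{b(i+1)})(0)=0\le D(|\iota_{i+1}|)(\sigma_{i+1})$, hence $(a_i,\sigma_i)\le_{\overline D(X)}(a_{i+1},\sigma_{i+1})$. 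You mention (iii) in this direction but do not locate its role precisely; this is the one place in the whole proof where it matters.
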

\begin{proof}
Let us first assume~(i) and deduce~(ii). Aiming at the latter, we consider a well order~$X$. We may assume that $X$ is non-empty, say~$\star\in X$: Otherwise, replace~$X$ by $X':=X\cup\{\star\}$; the obvious embedding $\iota:X\to X'$ induces an embedding $\overline D(\iota):\overline D(X)\to\overline D(X')$, so that $\overline D(X)$ is well founded if the same holds for $\overline D(X')$. Let us now consider an infinite sequence $(a_0,\sigma_0),(a_1,\sigma_1),\ldots$ in~$\overline D(X)$ (recall $a_i\in[X]^{<\omega}$ from Definition~\ref{def:dil-extend}). For $i\in\mathbb N$ we set
\begin{equation*}
b(i):=|\{\star\}\cup a_0\cup\dots\cup a_i|.
\end{equation*}
Then $b:\mathbb N\to\mathbb N$ is non-decreasing, and the presence of~$\star$ ensures~$b(0)>0$, as required by Definition~\ref{def:Goodstein-system}. For $i\leq j$ we define $c_{ij}:b(i)\to b(j)$ as the unique function that makes the following diagram commute:
\begin{equation*}
\begin{tikzcd}[row sep=small]
b(i)=\{0,\dots,b(i)-1\}\ar[dd,swap,"c_{ij}"]\ar[r,"\cong"] & \{\star\}\cup a_0\cup\dots\cup a_i\ar[dd,hook]\ar[rd,hook] & \\
& & X \\
b(j)=\{0,\dots,b(j)-1\}\ar[r,"\cong"] & \{\star\}\cup a_0\cup\dots\cup a_j\ar[ru,hook] &
\end{tikzcd}
\end{equation*}
Here the horizontal arrows are order isomorphisms with respect to the usual order on the sets $b(k)\subseteq\mathbb N$. This entails that $c_{ij}$ is strictly increasing. We set $c_i:=c_{i,i+1}$ to get $c_{ij}=c_{j-1}\circ\dots\circ c_i$, as in the paragraph before Definition~\ref{def:Goodstein-system}. To see that $b$ and $c:=\langle c_i\rangle_{i\in\mathbb N}$ form a Goodstein system in the sense of that definition, we consider a function $d:\mathbb N\to\mathbb N$ with $d(i)<b(i)$, as well as an infinite set~$Y\subseteq\mathbb N$. Let us enumerate the latter as $Y=\{y(0),y(1),\dots\}$ with $y(0)<y(1)<\dots$ in increasing order. We now define $d'(k)\in X$ as the image of $d(y(k))\in b(y(k))$ under the following function (cf.~the previous diagram):
\begin{equation*}
\begin{tikzcd}[row sep=small]
b(y(k))\ar[r,"\cong"] & \{\star\}\cup a_0\cup\dots\cup a_{y(k)}\ar[r,hook] & X\\
d(y(k))\ar[u,symbol=\in]\ar[rr,maps to] & & d'(k)\ar[u,symbol=\in]
\end{tikzcd}
\end{equation*}
Due to the assumption that $X$ is a well order, we must have $d'(k)\leq_X d'(k+1)$ for some~$k\in\mathbb N$. Set $i:=y(k)$ and $j:=y(k+1)$, and observe $i<j\in Y$. Combining the definition of $d'(k)$ with the diagram that defines $c_{ij}$, we see that $c_{ij}(d(i))\in b(j)$ is mapped to $d'(k)\in X$, while $d(j)\in b(j)$ is mapped to $d'(k+1)\in X$. Since the relevant function from~$b(j)$ to~$X$ is an embedding, we can invoke $d'(k)\leq_X d'(k+1)$ to infer $c_{ij}(d(i))\leq d(j)$. This completes the proof that $(b,c)$ is a Goodstein system in the sense of Definition~\ref{def:Goodstein-system}. Now consider the inclusions $\iota_i:a_i\to\{\star\}\cup a_0\cup\dots\cup a_i$ for $i\in\mathbb N$. According to a previous definition, the function $|\iota_i|:|a_i|\to b(i)$ is the unique morphism that makes the following diagram commute:
\begin{equation*}
\begin{tikzcd}[row sep=large]
a_i\ar[d,"\cong"]\ar[r,"\iota_i"] & \{\star\}\cup a_0\cup\dots\cup a_i\ar[d,"\cong"]\\
{|a_i|=\{0,\dots,|a_i|-1\}}\ar[r,"{|\iota_i|}"] & {b(i)=\{0,\dots,b(i)-1\}}
\end{tikzcd}
\end{equation*}
Let us now recall that the sets $a_i$ are the first components of pairs $(a_i,\sigma_i)\in\overline D(X)$. In view of Definition~\ref{def:dil-extend} we have $\sigma_i\in D(|a_i|)$, so that we can set
\begin{equation*}
m:=D(|\iota_0|)(\sigma_0)\in D(b(0)).
\end{equation*}
Definition~\ref{def:gen-Goodstein-sequence} provides a Goodstein sequence $G^D_{b,c,m}(0),G^D_{b,c,m}(1),\dots$ for the given Goodstein dilator~$D$ and $b,c,m$ as just defined.  The following will be crucial:
\begin{claim}
If we have $(a_0,\sigma_0)>_{\overline D(X)}\dots>_{\overline D(X)}(a_i,\sigma_i)$ and $G^D_{b,c,m}(j)\neq 0$ for all $j<i$, then we have $D(|\iota_i|)(\sigma_i)\leq_{D(b(i))} G^D_{b,c,m}(i)$.
\end{claim}
\noindent As preparation, we observe that
\begin{equation*}
(a_j,\sigma_j)<_{\overline D(X)}(a_k,\sigma_k)\quad\Leftrightarrow\quad D(c_{jl}\circ|\iota_j|)(\sigma_j)<_{D(b(l))} D(c_{kl}\circ|\iota_k|)(\sigma_k)
\end{equation*}
holds for $j,k\leq l$. This equivalence is readily reduced to Definition~\ref{def:dil-extend}, using the fact that the following diagram commutes, where each arrow between objects $|a|$ and~$|a'|$ represents the morphism $|\iota|$ that is induced by the inclusion~$\iota:a\hookrightarrow a'$.
\begin{equation*}
\begin{tikzcd}[row sep=large]
{|a_j|}\ar[r]\ar[d,"{|\iota_j|}",swap] & {|a_j\cup a_k|}\ar[d] & {|a_k|}\ar[l]\ar[d,"{|\iota_j|}"]\\
b(j)\ar[r,"c_{jl}",swap] & {b(l)=|\{\star\}\cup a_0\cup\dots\cup a_l|} & b(k)\ar[l,"c_{kl}"]
\end{tikzcd}
\end{equation*}
To prove our claim, we first note that $D(|\iota_0|)(\sigma_0)=m=G^D_{b,c,m}(0)$ holds by construction. Arguing by induction, we now assume $D(|\iota_i|)(\sigma_i)\leq_{D(b(i))} G^D_{b,c,m}(i)\neq 0$ and $(a_i,\sigma_i)>_{\overline D(X)}(a_{i+1},\sigma_{i+1})$. Using the equivalence that we have established as preparation (with $k=i$ and $j=i+1=l$, so that $c_{jl}$ is the identity), we obtain
\begin{multline*}
D(|\iota_{i+1}|)(\sigma_{i+1})\leq_{D(b(i+1))} D(c_i\circ|\iota_i|)(\sigma_i)-1\leq_{D(b(i+1))}\\
\leq_{D(b(i+1))} D(c_i)(G^D_{b,c,m}(i))-1=G^D_{b,c,m}(i+1),
\end{multline*}
which completes the proof of the induction step and hence of the claim.

Statement~(i) from the present theorem provides an $i\in\mathbb N$ with $G^D_{b,c,m}(i)=0$. We may assume that~$i$ is minimal with this property. Using the claim above, we now deduce that $(a_j,\sigma_j)\leq_{\overline D(X)}(a_{j+1},\sigma_{j+1})$ holds for some $j\leq i$, as required to show that $\overline D(X)$ is well founded. If the desired inequality does not hold for any~$j<i$, then the claim yields $D(|\iota_i|)(\sigma_i)=0$. Due to condition~(iii) of Definition~\ref{def:Goodstein-dilator} we also have $D(\emp_{b(i)})(0)=0$, where $\emp_{b(i)}:0\to b(i)$ is the empty function. In view of $c_i\circ\emp_{b(i)}=\emp_{b(i+1)}$ (both are the empty function) we get
\begin{multline*}
D(c_i\circ|\iota_i|)(\sigma_i)=D(c_i\circ\emp_{b(i)})(0)=\\
=D(\emp_{b(i+1)})(0)=0\leq_{D(b(i+1))} D(|\iota_{i+1}|)(\sigma_{i+1}).
\end{multline*}
The equivalence that we have shown in the proof of the claim remains valid when we replace both strict inequalities by non-strict ones, due to trichotomy. Thus the inequality that we have just established entails~$(a_i,\sigma_i)\leq_{\overline D(X)}(a_{i+1},\sigma_{i+1})$, as needed. This completes the proof that~(i) implies~(ii).

We now assume~(ii) and deduce~(i). Aiming at the latter, we consider a Goodstein system~$(b,c)$. Recall that we have $c=\langle c_i\rangle_{i\in\mathbb N}$ with functions~$c_i:b(i)\to b(i+1)$. As before, we write $c_{ij}:=c_{j-1}\circ\dots\circ c_i:b(i)\to b(j)$ for $i\leq j$. We consider the morphisms~$c_{ij}$ as a directed system in the category of linear orders. The direct limit over this system consists of embeddings $e_i:b(i)\to X$ into a linear order~$X$ with underlying set
\begin{equation*}
X=\bigcup_{i\in\mathbb N}\rng(e_i),
\end{equation*}
such that the following diagram commutes for any $i<j$:
\begin{equation*}
\begin{tikzcd}[row sep=small]
b(i)=\{0,\dots,b(i)-1\}\ar[rd,"e_i"]\ar[dd,"c_{ij}",swap] & \\
 & X \\
b(j)=\{0,\dots,b(j)-1\}\ar[ru,"e_j"]
\end{tikzcd}
\end{equation*}
Working in~$\rca_0$, one can explicitly construct $X$ as an order with underlying set
\begin{equation*}
\{(0,n)\,|\,n\in b(0)\}\cup\{(i+1,n)\,|\,i\in\mathbb N\text{ and }n\in b(i+1)\backslash\rng(c_i)\}.
\end{equation*}
The order between $(i,n)$ and $(j,n')$ is determined by comparing $c_{ik}(n)$ and $c_{jk}(n')$ for some number~$k\geq i,j$ (the precise value of which is irrelevant). A family of functions $e_j:b(j)\to X$ as above can be defined by stipulating that $e_j(n)=(i,n_0)$ holds for $c_{ij}(n_0)=n$ with $i\leq j$ as small as possible. Having constructed the direct limit~$X$, we now show that it is a well order. Given an arbitrary function~$f:\mathbb N\to X$, we construct a set $Y=\{y(0),y(1),\dots\}\subseteq\mathbb N$ so that we have $y(0)<y(1)<\dots$ and $f(i)\in\rng(e_{y(i)})$ for all~$i\in\mathbb N$ (which is possible since the union $X=\bigcup_{i\in\mathbb N}\rng(e_i)$ is increasing). We now consider the function $d:\mathbb N\to\mathbb N$ that is given by
\begin{equation*}
d(i):=\begin{cases}
n & \text{if $i=y(k)$ and $e_i(n)=f(k)$},\\
0 & \text{if $i\notin Y$}.
\end{cases}
\end{equation*}
Let us note that $d(i)<b(i)$ holds in both cases, in the second one by the condition $0<b(0)\leq b(1)\leq\dots$ from Definition~\ref{def:Goodstein-system}. The latter does now provide~$i,j\in Y$ with $i<j$ and $c_{ij}(d(i))\leq d(j)$. Writing $i=y(k)$ and $j=y(l)$ we get $k<l$ and
\begin{equation*}
f(k)=e_i(d(i))=e_j\circ c_{ij}(d(i))\leq_X e_j(d(j))=f(l),
\end{equation*}
as required to show that~$X$ is well founded. Aiming at~(i), we now consider an element~$m\in D(b(0))$ and the resulting Goodstein sequence~$G_{b,c,m}^D(0),G_{b,c,m}^D(1),\dots$ determined by Definition~\ref{def:gen-Goodstein-sequence}. We write $\mu:D\Rightarrow\overline D\!\restriction\!\nat$ for the inverse of the natural transformation $\eta^D$ from Proposition~\ref{prop:dil-extend}. Let us define $g:\mathbb N\to\overline D(X)$ by
\begin{equation*}
g(i):=\overline D(e_i)\circ\mu_{b(i)}(G^D_{b,c,m}(i)).
\end{equation*}
If we have $G^D_{b,c,m}(i)\neq 0$, then Definition~\ref{def:gen-Goodstein-sequence} yields
\begin{equation*}
G^D_{b,c,m}(i+1)=D(c_i)(G^D_{b,c,m}(i))-1<_{D(b(i+1))}D(c_i)(G^D_{b,c,m}(i)).
\end{equation*}
Using the naturality of $\mu$ and the equation $e_{i+1}\circ c_i=e_i$ from the commutative diagram above, we can deduce
\begin{multline*}
g(i+1)<_X\overline D(e_{i+1})\circ\mu_{b(i+1)}\circ D(c_i)(G^D_{b,c,m}(i))=\\
=\overline D(e_{i+1})\circ\overline D(c_i)\circ\mu_{b(i)}(G^D_{b,c,m}(i))=\overline D(e_i)\circ\mu_{b(i)}(G^D_{b,c,m}(i))=g(i),
\end{multline*}
still under the assumption $G^D_{b,c,m}(i)\neq 0$. Statement~(ii) of the theorem ensures that $\overline D(X)$ is well founded. Hence we cannot have $g(i+1)<_X g(i)$ for all~$i\in\mathbb N$. It follows that $G^D_{b,c,m}(i)=0$ must hold eventually, as required for statement~(i).
\end{proof}

Putting things together, we can deduce the following result, which was also stated in the introduction:

\begin{theorem}\label{thm:Goodstein-ACA}
The following are equivalent over~$\rca_0$:
\begin{enumerate}[label=(\roman*)]
\item arithmetical comprehension,
\item the extended Goodstein theorem for the binary representation: for any Goodstein system $(b,c)$ and any $m<2^{b(0)}$ there is an $i\in\mathbb N$ with $G_{b,c,m}^{2}(i)=0$.
\end{enumerate}
\end{theorem}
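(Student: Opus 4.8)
The plan is to assemble Theorem~\ref{thm:Goodstein-ACA} from Theorem~\ref{thm:Goodstein-dilator}, the analysis of the binary representation in Examples~\ref{ex:2-Goodstein} and~\ref{ex:2-Goodstein-extend}, and the known equivalence between arithmetical comprehension and the statement that $X\mapsto 2^X$ preserves well foundedness. Nothing new needs to be proved; the work is to line up the cited facts correctly inside $\rca_0$.

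First I would recall that the functor $2:\nat\to\lo$ from Example~\ref{ex:2-Goodstein} is a Goodstein dilator with $2(b)=\{0,\dots,2^b-1\}$, so that the hypothesis $m\in 2(b(0))$ is literally $m<2^{b(0)}$. As recorded at the end of that example, the Goodstein sequences $G^{2}_{b,c,m}(0),G^{2}_{b,c,m}(1),\dots$ produced by Definition~\ref{def:gen-Goodstein-sequence} for $D=2$ coincide with the ad hoc binary Goodstein sequences introduced in Section~1. Hence statement~(ii) of the present theorem is exactly statement~(i) of Theorem~\ref{thm:Goodstein-dilator} instantiated at the Goodstein dilator $D=2$.

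Next I would apply Theorem~\ref{thm:Goodstein-dilator}, which $\rca_0$ proves: statement~(ii) is equivalent over $\rca_0$ to the assertion that the extension $\overline 2:\lo\to\lo$ preserves well foundedness. By Example~\ref{ex:2-Goodstein-extend}, the functor $\overline 2$ is naturally isomorphic, provably in $\rca_0$, to the functor $\widehat 2$ with $\widehat 2(X)$ the set of finite $<_X$-descending sequences under the lexicographic order, i.e.\ to the operation denoted $X\mapsto 2^X$ in the introduction. A natural isomorphism transports well foundedness in both directions, so $\overline 2$ preserves well foundedness if and only if $X\mapsto 2^X$ does; and since $2^X$ is a linear order whenever $X$ is, ``preserves well foundedness'' here is the same as ``preserves well orders''.

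Finally I would invoke the result of Girard and Hirst~\cite{girard87,hirst94}, cited in the introduction: over $\rca_0$, arithmetical comprehension is equivalent to the statement that $X\mapsto 2^X$ preserves well foundedness. Chaining the equivalences — statement~(ii) $\Leftrightarrow$ ``$\overline 2$ preserves well foundedness'' $\Leftrightarrow$ ``$X\mapsto 2^X$ preserves well foundedness'' $\Leftrightarrow$ arithmetical comprehension, each step provable in $\rca_0$ — gives the theorem. I expect no genuine obstacle here; the only points deserving a sentence of care are that the formulation of well foundedness in Theorem~\ref{thm:Goodstein-dilator}(ii) agrees with the ``well ordering principle'' phrasing in the cited literature (immediate, as $2^X$ is linear when $X$ is) and that Hirst's proof of the cited equivalence is available over $\rca_0$.
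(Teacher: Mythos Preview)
Your proposal is correct and follows essentially the same route as the paper: instantiate Theorem~\ref{thm:Goodstein-dilator} at the Goodstein dilator~$2$ from Example~\ref{ex:2-Goodstein}, pass from $\overline 2$ to $\widehat 2=2^X$ via Example~\ref{ex:2-Goodstein-extend}, and then cite the Girard--Hirst equivalence with arithmetical comprehension. The paper's proof is terser but identical in structure; your extra remarks (that $m\in 2(b(0))$ is literally $m<2^{b(0)}$, that the two definitions of the binary Goodstein sequence agree, and that linearity of $2^X$ makes the two phrasings of well foundedness coincide) are all correct and simply make explicit what the paper leaves implicit.
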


Concerning the Goodstein sequences $G_{b,c,m}^{2}(0),G_{b,c,m}^{2}(1),\dots$ in statement~(ii), we point out that we have given two different but equivalent definitions: The ad hoc construction from the introduction and the construction that results from the general Definition~\ref{def:gen-Goodstein-sequence} for the Goodstein dilator~$2:\nat\to\lo$ defined in Example~\ref{ex:2-Goodstein}.

\begin{proof}
By Theorem~\ref{thm:Goodstein-dilator}, statement~(ii) of the present theorem is equivalent to the assertion that $\overline 2:\lo\to\lo$ preserves well foundedness, where $2:\nat\to\lo$ is the Goodstein dilator from Example~\ref{ex:2-Goodstein}. As we have seen in Example~\ref{ex:2-Goodstein-extend}, this assertion is itself equivalent to the following statement: If $X=(X,<_X)$ is a well order, then so is the lexicographic order on
\begin{equation*}
2^X=\widehat 2(X)=\{\langle x_0,\dots,x_{k-1}\rangle\,|\, x_0,\dots,x_{k-1}\in X\text{ with }x_{k-1}<_X\dots<_X x_0\}.
\end{equation*}
This last statement is known to be equivalent to arithmetical comprehension over the base theory~$\rca_0$ (see~\cite{girard87} and~\cite[Theorem~2.6]{hirst94}).
\end{proof}

To conclude this section, we present a version of our equivalence for Goodstein sequences without coefficient changes. In the following we have $G_{b,m}^D(i)=G_{b,c,m}^D(i)$ for $c_j:b(j)\to b(j+1)$ with $c_j(n)=n$ (cf.~Definition~\ref{def:gen-Goodstein-sequence}).

\begin{theorem}\label{thm:Goodstein-Pi11}
The theory $\rca_0$ proves that the following are equivalent for any Goodstein dilator~$D$:
\begin{enumerate}[label=(\roman*)]
\item the extended Goodstein theorem without coefficient changes: for any non-decreasing function $b:\mathbb N\to\mathbb N$ with $b(0)>0$ and any number $m\in D(b(0))$, there is an $i\in\mathbb N$ with $G_{b,m}^D(i)=0$,
\item the linear order~$\overline D(\mathbb N)$ is well founded.
\end{enumerate}
\end{theorem}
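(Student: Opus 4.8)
The plan is to adapt the proof of Theorem~\ref{thm:Goodstein-dilator}, specialised to the order $X=\mathbb N$ and to trivial coefficient changes $c_i(n)=n$. Note, however, that neither implication follows formally from Theorem~\ref{thm:Goodstein-dilator}: statement~(i) here is weaker than its analogue there (only trivial coefficient changes are allowed), and statement~(ii) here is weaker as well (only the single order $\mathbb N$ is considered). Both directions become technically lighter in the present setting, and in fact for $(ii)\Rightarrow(i)$ one may work directly inside $\overline D(\mathbb N)$ without passing through a direct limit at all.

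For $(ii)\Rightarrow(i)$, assume $\overline D(\mathbb N)$ is well founded and let $b\colon\mathbb N\to\mathbb N$ be non-decreasing with $b(0)>0$ and $m\in D(b(0))$. By Definition~\ref{def:gen-Goodstein-sequence} we have $G^D_{b,m}(i)=G^D_{b,c,m}(i)$ where $c_i\colon b(i)\to b(i+1)$ is the inclusion $\{0,\dots,b(i)-1\}\hookrightarrow\{0,\dots,b(i+1)-1\}$; write $\bar e_i\colon\{0,\dots,b(i)-1\}\hookrightarrow\mathbb N$ for the inclusion, so that $\bar e_{i+1}\circ c_i=\bar e_i$. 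Let $\mu\colon D\Rightarrow\overline D\!\restriction\!\nat$ be the inverse of the natural isomorphism $\eta^D$ from Proposition~\ref{prop:dil-extend} and put
\begin{equation*}
g(i):=\overline D(\bar e_i)\circ\mu_{b(i)}\bigl(G^D_{b,m}(i)\bigr)\in\overline D(\mathbb N).
\end{equation*}
If $G^D_{b,m}(i)\neq 0$, then $G^D_{b,m}(i+1)<_{D(b(i+1))}D(c_i)(G^D_{b,m}(i))$; since $\mu_{b(i+1)}$ is an order isomorphism and $\overline D(\bar e_{i+1})$ an order embedding, the naturality of $\mu$ and the functoriality of $\overline D$ give
\begin{multline*}
g(i+1)<_{\overline D(\mathbb N)}\overline D(\bar e_{i+1})\circ\mu_{b(i+1)}\circ D(c_i)\bigl(G^D_{b,m}(i)\bigr)\\
=\overline D(\bar e_{i+1}\circ c_i)\circ\mu_{b(i)}\bigl(G^D_{b,m}(i)\bigr)=g(i).
\end{multline*}
Were $G^D_{b,m}(i)$ never $0$, the sequence $g$ would be infinitely descending in $\overline D(\mathbb N)$, contradicting~(ii); hence $G^D_{b,m}(i)=0$ for some~$i$.

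For $(i)\Rightarrow(ii)$, assume the extended Goodstein theorem without coefficient changes and let $(a_0,\sigma_0),(a_1,\sigma_1),\dots$ be a sequence in $\overline D(\mathbb N)$; I will show it is not strictly descending. Put $b(i):=1+\max(\{0\}\cup a_0\cup\dots\cup a_i)$, so that $b$ is non-decreasing, $b(0)>0$, and $a_0\cup\dots\cup a_i\subseteq\{0,\dots,b(i)-1\}$; the crucial point of this choice (as opposed to $b(i)=|\{\star\}\cup a_0\cup\dots\cup a_i|$ in the proof of Theorem~\ref{thm:Goodstein-dilator}) is that the induced morphisms $c_i\colon b(i)\to b(i+1)$ are literally the inclusions, i.e.\ $c_i(n)=n$. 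Writing $\iota_i\colon a_i\hookrightarrow\{0,\dots,b(i)-1\}$ for the inclusion and $|\iota_i|\colon|a_i|\to b(i)$ for the induced morphism, set $m:=D(|\iota_0|)(\sigma_0)\in D(b(0))$ and consider the Goodstein sequence $G^D_{b,m}$. Exactly as in the proof of Theorem~\ref{thm:Goodstein-dilator} one reduces to Definition~\ref{def:dil-extend} the auxiliary equivalence
\begin{equation*}
(a_j,\sigma_j)<_{\overline D(\mathbb N)}(a_k,\sigma_k)\ \Leftrightarrow\ D(c_{jl}\circ|\iota_j|)(\sigma_j)<_{D(b(l))}D(c_{kl}\circ|\iota_k|)(\sigma_k)\qquad(j,k\leq l),
\end{equation*}
and then the Claim: if $(a_0,\sigma_0)>_{\overline D(\mathbb N)}\dots>_{\overline D(\mathbb N)}(a_i,\sigma_i)$ and $G^D_{b,m}(j)\neq 0$ for all $j<i$, then $D(|\iota_i|)(\sigma_i)\leq_{D(b(i))}G^D_{b,m}(i)$; the induction step uses the subtraction clause of Definition~\ref{def:gen-Goodstein-sequence} together with the equivalence instantiated at $l=i+1$. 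Applying statement~(i) yields a minimal $i$ with $G^D_{b,m}(i)=0$. If the sequence were strictly descending up to stage $i$, the Claim would force $D(|\iota_i|)(\sigma_i)=0$, and then condition~(iii) of Definition~\ref{def:Goodstein-dilator} together with $c_i\circ\emp_{b(i)}=\emp_{b(i+1)}$ gives $D(c_i\circ|\iota_i|)(\sigma_i)=0\leq_{D(b(i+1))}D(|\iota_{i+1}|)(\sigma_{i+1})$, whence $(a_i,\sigma_i)\leq_{\overline D(\mathbb N)}(a_{i+1},\sigma_{i+1})$ by the non-strict form of the equivalence. In every case the sequence admits a non-strict step, so $\overline D(\mathbb N)$ is well founded.

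I expect the main obstacle to be the Claim in the $(i)\Rightarrow(ii)$ direction — that the Goodstein sequence $G^D_{b,m}$ dominates the images $D(|\iota_i|)(\sigma_i)$ of a putative descending sequence — together with the degenerate case handled via condition~(iii), which is where the precise interplay of subtraction and empty functions matters. Since the $c_{ij}$ are plain inclusions here, however, the bookkeeping with the maps $|\iota|$ transcribes directly from the argument for Theorem~\ref{thm:Goodstein-dilator}; the only genuinely new observation is that enlarging the bases to $b(i)=1+\max(\{0\}\cup a_0\cup\dots\cup a_i)$ renders all coefficient changes trivial, as demanded by statement~(i).
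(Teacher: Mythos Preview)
Your proof is correct and takes essentially the same approach as the paper: for $(ii)\Rightarrow(i)$ you use the inclusions $b(i)\hookrightarrow\mathbb N$ in place of the direct-limit embeddings $e_i$, and for $(i)\Rightarrow(ii)$ you enlarge the bases to $b(i)=1+\max(\{0\}\cup a_0\cup\dots\cup a_i)$ so that the coefficient changes become trivial inclusions---which is exactly the paper's modification (it phrases this as taking $b(i)$ minimal with $\{0\}\cup a_0\cup\dots\cup a_i\subseteq\{0,\dots,b(i)-1\}$). The remaining argument, including the Claim and the use of condition~(iii) for the degenerate step, is transcribed from Theorem~\ref{thm:Goodstein-dilator} in the same way the paper does.
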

\begin{proof}
It suffices to modify the proof of Theorem~\ref{thm:Goodstein-dilator} as follows: In the proof that~(i) implies~(ii), we first put $\star=0\in\mathbb N=X$. We then replace $\{\star\}\cup a_0\cup\dots\cup a_i$ by the set $\{0,\dots,b(i)-1\}$, where $b(i)\in\mathbb N$ is minimal with
\begin{equation*}
\{\star\}\cup a_0\cup\dots\cup a_i\subseteq \{0,\dots,b(i)-1\}=b(i).
\end{equation*}
As a consequence, the isomorphism $b(i)\cong\{\star\}\cup a_0\cup\dots\cup a_i$ from the proof of Theorem~\ref{thm:Goodstein-dilator} becomes the identity, so that $c_{ij}:b(i)\to b(j)$ becomes the inclusion with $c_{ij}(n)=n$ for $n<b(i)$. Let us observe that this makes $(b,c)$ a Goodstein system: For any $d:\mathbb N\to\mathbb N$ with $d(i)<b(i)$ and any infinite $Y\subseteq\mathbb N$, we can clearly find $i<j$ in $Y$ with $d(j)\geq d(i)=c_{ij}(d(i))$. The rest of the argument remains unchanged, except that $\iota_i:a_i\hookrightarrow b(i)$ has modified co-domain. In the proof that~(ii) implies~(i), we replace~$X$ by~$\mathbb N$ and consider the inclusions $e_i:b(i)\hookrightarrow\mathbb N$ as well as~$c_{ij}:b(i)\hookrightarrow b(j)$. The order $\mathbb N$ may be bigger than the direct limit over the morphisms $c_{ij}$, namely when the range of~$b:\mathbb N\to\mathbb N$ is finite. Nevertheless, the rest of the argument goes through unchanged.
\end{proof}

\section{From Ackermann function to Veblen hierarchy}\label{sect:Ackermann-to-Veblen}

In this section we define a Goodstein dilator~$A:\nat\to\lo$ that is based on the Ackermann function. We then prove Theorem~\ref{thm:Ackermann-Veblen} from the introduction, which asserts that the extension $\overline A:\lo\to\lo$ of this Goodstein dilator is closely related to the Veblen hierarchy. Finally, we deduce Theorem~\ref{thm:Goodstein-ATR}, which shows that the extended Goodstein theorem for~$A$ is equivalent to arithmetical transfinite recursion.

Let us recall the fast-growing hierarchy of functions $F_b:\mathbb N\to\mathbb N$ for $b\in\mathbb N$, as considered in the introduction: It is given by the recursive clauses
\begin{equation*}
F_0(n)=n+1\qquad\text{and}\qquad F_{b+1}(n)=F_b^{1+n}(n),
\end{equation*}
where the superscript refers to iteration (which is recursively defined by $F^0(n)=n$ and $F^{m+1}(n)=F(F^m(n))$). We are particularly interested in the map~$b\mapsto F_b(1)$, a variant of the well-known Ackermann function.

As mentioned in the introduction, the Ackermann function grows so fast that $\rca_0$ cannot prove that it is total (i.\,e.~that all evaluations according to the recursive clauses will terminate). Somewhat informally, the issue is that $F_b(1)$ depends on the value of $F_{b-1}$ on the large argument $F_{b-1}(1)$, or more generally: that there is no a priori bound on the arguments that appear in a recursive evaluation. On the other hand, it is easy to prove the totality of $F_b$ by induction on~$b\in\mathbb N$. Here the induction hypothesis secures the infinitely many evaluations of $F_{b-1}(0),F_{b-1}(1),\dots$, which avoids the need for a bound. In order to accommodate this induction, we extend our base theory by the principle~$\isigma_2$ of $\Sigma^0_2$-induction. The latter is equivalent to $\Pi^0_2$-induction and strictly weaker than the principle of arithmetical comprehension, which we have encountered above. Let us stress that the additional induction principle is only needed once (to prove that the functions $F_b$ are total) and without set parameters. Also, many results could be reproduced in~$\rca_0$ itself if values of the Ackermann function were represented by terms, rather than computed as numbers. However, this would make it necessary to reformulate clause~(i) of Definition~\ref{def:Goodstein-dilator} (in terms of a function $D(b)\backslash\{0\}\ni\sigma\mapsto\sigma-1\in D(b)$ on term representations). We also think that Goodstein's theorem is more natural when Goodstein sequences consist of actual numbers. Having discussed these foundational issues, we record some standard facts about the fast growing hierarchy:

\begin{lemma}\label{lem:F-basic}
The following is provable in $\rca_0+\isigma_2$:
\begin{enumerate}[label=(\alph*)]
\item We have $n<F_b(n)$ for all $b,n\in\mathbb N$.
\item The function $F_b$ is strictly increasing for any~$b\in\mathbb N$.
\item The function $b\mapsto F_b(n)$ is strictly increasing for any~$n>0$.
\end{enumerate}
\end{lemma}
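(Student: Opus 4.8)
The plan is to prove all three items by induction on~$b$, drawing only on facts already available. First I would note that $\isigma_2$ proves each $F_b$ to be total: this is an induction on~$b$ whose hypothesis ``$F_{b-1}$ is total'' is a $\Pi^0_2$ statement, so $\mathbf{I\Pi}^0_2$ — equivalently $\isigma_2$ — suffices, and in the step one simply evaluates $F_b(n)=F_{b-1}^{1+n}(n)$ by iterating the (total) function~$F_{b-1}$. From now on the iterates $F_b^k$ and the values $F_{b+1}(n)=F_b^{1+n}(n)$ are well defined. I would also isolate the elementary sub-lemma: if a total $g:\mathbb N\to\mathbb N$ satisfies $m<g(m)$ for all~$m$, then $n+k\le g^k(n)$ for all $n,k$; with $n$ and~$g$ as parameters this is a $\Sigma^0_1$ induction on~$k$, so it already goes through in~$\rca_0$.

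For part~(a) I would induct on~$b$ over the $\Pi^0_2$ formula $\forall n\,(n<F_b(n))$. The base case is immediate since $F_0(n)=n+1$. In the step, the induction hypothesis says exactly that $F_b$ satisfies the premise of the sub-lemma, so $F_{b+1}(n)=F_b^{1+n}(n)\ge n+(1+n)>n$. For part~(b) I would again induct on~$b$, now over the $\Pi^0_2$ formula ``$F_b$ is strictly increasing''; note that this must come \emph{after}~(a), which is used in the step. The base case is clear. For the step, a side induction on~$j$ (with the iterated arguments as parameters, hence $\Sigma^0_1$ and within~$\rca_0$) shows each iterate $F_b^j$ is strictly increasing; then for $m<n$ one chains
\[
F_{b+1}(m)=F_b^{1+m}(m)<F_b^{1+m}(n)<F_b^{1+n}(n)=F_{b+1}(n),
\]
where the first inequality uses strict monotonicity of $F_b^{1+m}$ together with $m<n$, and the second holds because applying $F_b$ the extra $n-m\ge 1$ times strictly increases the value, by~(a).

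For part~(c) no further induction on~$b$ is needed. Fixing $n>0$ we have $1+n\ge 2$, so $F_{b+1}(n)=F_b\bigl(F_b^{n}(n)\bigr)$; since $n\ge 1$ at least one application of $F_b$ occurs and, by~(a), $F_b$ strictly increases its argument, so $F_b^{n}(n)>n$. Strict monotonicity~(b) then gives $F_{b+1}(n)=F_b(F_b^{n}(n))>F_b(n)$, and the general claim that $b\mapsto F_b(n)$ is strictly increasing follows by transitivity. I would also remark that the hypothesis $n>0$ cannot be dropped, since $F_{b+1}(0)=F_b^{1}(0)=F_b(0)$.

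The hard part is not any individual argument — each is a short induction — but rather keeping the logical complexity in check: every induction on~$b$ is over a $\Pi^0_2$ (equivalently $\Sigma^0_2$) formula, which is precisely what $\isigma_2$ supplies once the graphs of the $F_b$ are available, while the nested side-inductions on the iteration count are genuinely $\Sigma^0_1$ (with the iterated arguments treated as parameters) and so need nothing beyond~$\rca_0$. Thus $\isigma_2$ is invoked only for the totality of the $F_b$ and for the three inductions on~$b$ above, exactly as anticipated in the discussion preceding the lemma.
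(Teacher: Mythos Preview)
Your proposal is correct and follows essentially the same approach as the paper: induction on~$b$ for~(a) and~(b), and a direct computation for~(c). The only noteworthy difference is that the paper's argument for~(c) avoids invoking~(b) altogether, writing $F_{b+1}(n)=F_b^{1+n}(n)\geq F_b^2(n)>F_b(n)$ and justifying both inequalities from~(a) alone; your route via monotonicity of~$F_b$ is equally valid but slightly less economical.
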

\begin{proof}
Parts~(a) and~(b) are readily verified by induction on~$b$. Part~(c) follows as
\begin{equation*}
F_{b+1}(n)=F_b^{1+n}(n)\geq F^2_b(n)>F_b(n),
\end{equation*}
where the inequalities rely on the assumption $n>0$ and on part~(a).
\end{proof}

As in the case of the binary representation (cf.~the introduction and Example~\ref{ex:2-Goodstein}), it will be crucial to have a suitable notion of normal form. The following Ackermann normal forms were previously considered by T.~Arai~\cite[Section~4.3.2.3]{arai-book-2020}.

\begin{definition}[$\rca_0+\isigma_2$]\label{def:Ack-normal-form}
By an Ackermann normal form of a number~$m\in\mathbb N$ we mean a representation
\begin{equation*}
m\nf F_{b_{k-1}}^{1+n_{k-1}}\circ\dots\circ F_{b_0}^{1+n_0}(1)
\end{equation*}
with $b_{k-1}<\dots<b_0$ and $n_i<F_{b_{i-1}}^{1+n_{i-1}}\circ\dots\circ F_{b_0}^{1+n_0}(1)$ for all~$i<k$ (in particular $n_0=0$ in case $k\neq 0$, since the empty composition is the identity).
\end{definition}

We recycle the notation from Example~\ref{ex:2-Goodstein-extend} and write $<_{\widehat 2(X)}$ for the lexicographic order between descending sequences in~$X$. In the following,  $X=\mathbb N\times\mathbb N$ carries the usual product order (cf.~the paragraph before Theorem~\ref{thm:Ackermann-Veblen} in the introduction).

\begin{proposition}[$\rca_0+\isigma_2$]\label{prop:Ack-NF}
Every number $m>0$ has a unique Ackermann normal form. Furthermore, we have
\begin{multline*}
F_{b_{k-1}}^{1+n_{k-1}}\circ\dots\circ F_{b_0}^{1+n_0}(1)<F_{b_{l-1}'}^{1+n_{l-1}'}\circ\dots\circ F_{b_0'}^{1+n_0'}(1)\quad\Leftrightarrow\\
\langle (b_0,n_0),\dots,(b_{k-1},n_{k-1})\rangle<_{\widehat 2(\mathbb N\times\mathbb N)}\langle (b_0',n_0'),\dots,(b_{l-1}',n_{l-1}')\rangle
\end{multline*}
if the numbers in the left (i.\,e.~upper) inequality are in Ackermann normal form.
\end{proposition}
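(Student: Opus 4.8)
The plan is to isolate a single ``tiling'' lemma about representations relative to an arbitrary \emph{seed} and to obtain both assertions from it. For $s\geq 1$ and $b\in\mathbb N$, call a representation $m=F_{b_{j-1}}^{1+n_{j-1}}\circ\dots\circ F_{b_0}^{1+n_0}(s)$ \emph{$(b,s)$-admissible} if $b>b_0>\dots>b_{j-1}\geq 0$, if $n_0<s$, and if $n_i<F_{b_{i-1}}^{1+n_{i-1}}\circ\dots\circ F_{b_0}^{1+n_0}(s)$ for $0<i<j$; here $j=0$ is allowed and denotes the empty composition, whose value is $s$. Before anything else I would record, as consequences of Lemma~\ref{lem:F-basic}, that $F_b^{j}(x)\geq x+j$ for all $j$ (iterate part~(a)) and hence that $F_b^{a}(x)\leq F_b^{a'}(x)$ whenever $a\leq a'$, and I would keep the defining equation $F_{b+1}(s)=F_b^{1+s}(s)$ in view. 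The heart of the argument is the claim that, for all $b\in\mathbb N$ and $s\geq 1$, every $m$ with $s\leq m<F_b(s)$ has exactly one $(b,s)$-admissible representation, while no $m\geq F_b(s)$ has one.

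I would prove this claim by induction on $b$. For $b=0$ the only $(0,s)$-admissible representation is the empty one, of value $s$, and $F_0(s)=s+1$, so there is nothing to check. For the step from $b$ to $b+1$: a $(b+1,s)$-admissible representation either uses no index equal to $b$, in which case it is $(b,s)$-admissible, or it has innermost block $F_{b}^{1+n}$ with $n<s$; in the latter case, writing $s':=F_b^{1+n}(s)$, the part applied after $F_b^{1+n}$ is precisely a $(b,s')$-admissible representation of the same $m$ --- the point to check being that the constraint on the next coefficient, namely ``$<F_b^{1+n}(s)$'', is exactly ``$<s'$''. By the induction hypothesis the values so obtained fill, respectively, the interval $[s,F_b(s))$ and, for each $n<s$, the interval $[F_b^{1+n}(s),F_b^{2+n}(s))$, each point exactly once. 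Since $F_b^{2+n}(s)=F_b\bigl(F_b^{1+n}(s)\bigr)$, these half-open intervals are pairwise disjoint and abut end to end starting at $s$, and the last one, for $n=s-1$, ends at $F_b^{1+s}(s)=F_{b+1}(s)$. Hence their union is exactly $[s,F_{b+1}(s))$, with a unique admissible representation for each element, which completes the induction. (The induction is over an arithmetical formula, once $\isigma_2$ has secured the totality of the $F_b$.)

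The first assertion of Proposition~\ref{prop:Ack-NF} now follows by taking $s=1$: given $m>0$, Lemma~\ref{lem:F-basic}(c) makes $b\mapsto F_b(1)$ strictly increasing, hence unbounded, so $m<F_b(1)$ for some $b$; the claim yields a unique $(b,1)$-admissible representation, and the requirement $n_0<s=1$ forces $n_0=0$, so this representation is exactly the Ackermann normal form. It does not depend on the choice of $b$, since any Ackermann normal form of $m$ has innermost index $b_0$ with $F_{b_0}(1)\leq m$, hence all indices $<b$ as soon as $F_b(1)>m$. For the order equivalence it suffices to prove ``$\Leftarrow$'', i.e.\ that the assignment $s\mapsto\operatorname{val}(s)$ of its value to a normal form --- a bijection onto $\{1,2,3,\dots\}$ by the first part --- sends $<_{\widehat 2(\mathbb N\times\mathbb N)}$ into the order of $\mathbb N$: an order-preserving bijection of linear orders is an isomorphism, so trichotomy on both sides then gives the full equivalence. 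So assume $s=\langle(b_0,n_0),\dots,(b_{k-1},n_{k-1})\rangle<_{\widehat 2(\mathbb N\times\mathbb N)}s'=\langle(b_0',n_0'),\dots,(b_{l-1}',n_{l-1}')\rangle$. If $s$ is a proper prefix of $s'$, then $\operatorname{val}(s)$ equals the value $s^\ast$ of that common prefix while $\operatorname{val}(s')\geq F_{b_k'}(s^\ast)>s^\ast$ by Lemma~\ref{lem:F-basic}(a). Otherwise let $j$ be the first place of disagreement and $s^\ast$ the value of the common prefix (so $s^\ast=1$ and $n_0=n_0'=0$ if $j=0$). Applying the claim with seed $\sigma:=F_{b_j}^{1+n_j}(s^\ast)$ to the tail of $s$ (which is $(b_j,\sigma)$-admissible) gives $\operatorname{val}(s)\in[\sigma,F_{b_j}(\sigma))=[F_{b_j}^{1+n_j}(s^\ast),F_{b_j}^{2+n_j}(s^\ast))$, while applying only increasing functions after the block $F_{b_j'}^{1+n_j'}$ gives $\operatorname{val}(s')\geq F_{b_j'}^{1+n_j'}(s^\ast)$. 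Since $(b_j,n_j)<_{\mathbb N\times\mathbb N}(b_j',n_j')$ and $n_j<s^\ast$: if $b_j<b_j'$ then $F_{b_j}^{2+n_j}(s^\ast)\leq F_{b_j}^{1+s^\ast}(s^\ast)=F_{b_j+1}(s^\ast)\leq F_{b_j'}(s^\ast)\leq\operatorname{val}(s')$; if $b_j=b_j'$ and $n_j<n_j'$ then $F_{b_j}^{2+n_j}(s^\ast)\leq F_{b_j}^{1+n_j'}(s^\ast)\leq\operatorname{val}(s')$. In either case $\operatorname{val}(s)<\operatorname{val}(s')$.

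The main obstacle is the inductive step of the tiling claim: keeping the value-dependent coefficient conditions straight when one peels off the innermost block and passes to a new seed, and confirming that the intervals produced tile $[s,F_{b+1}(s))$ without gaps or overlaps. This rests entirely on the elementary monotonicity facts above together with the recursion $F_{b+1}(s)=F_b^{1+s}(s)$ --- the same equation that, in the order argument, turns the hypothesis $n_j<s^\ast$ into the crucial inequality $F_{b_j}^{2+n_j}(s^\ast)\leq F_{b_j+1}(s^\ast)$.
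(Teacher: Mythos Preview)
Your proof is correct and takes a genuinely different route from the paper. The paper establishes existence by a \emph{greedy algorithm}: starting from $m_0=1$, it repeatedly picks $(b_i,n_i)$ maximal with $F_{b_i}^{1+n_i}(m_i)\leq m$, and then checks separately that the resulting sequence satisfies the normal-form conditions. For the order equivalence, the paper proves an auxiliary inequality
\[
F_{b_{i-1}}\circ F_{b_{i-1}}^{1+n_{i-1}}\circ\dots\circ F_{b_j}^{1+n_j}(m_0)\leq F_{b_j}^{1+n_j+1}(m_0)
\]
by an induction running \emph{outward} from the point of first disagreement to the end of the shorter sequence. Your tiling lemma replaces both of these by a single structural induction on the index bound~$b$: it shows directly that the $(b,s)$-admissible representations partition $[s,F_b(s))$, which gives existence and uniqueness at once, and moreover yields the sharp interval bound $\operatorname{val}(s)\in[F_{b_j}^{1+n_j}(s^\ast),F_{b_j}^{2+n_j}(s^\ast))$ that makes the order comparison essentially a one-line calculation. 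The paper's greedy construction is more explicitly algorithmic (it tells you how to \emph{compute} the normal form of a given number), whereas your approach is more structural and makes the decomposition $[s,F_{b+1}(s))=[s,F_b(s))\sqcup\bigsqcup_{n<s}[F_b^{1+n}(s),F_b^{2+n}(s))$ the organizing principle; this is cleaner and avoids the somewhat delicate outward induction the paper needs for the comparison lemma.
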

\begin{proof}
In order to find an Ackermann normal form of $m>0$, we construct a sequence $1=:m_0<\dots<m_k=m$ by the following recursion: Assume that we have $m_i<m$ and hence $F_0(m_i)=m_i+1\leq m$. We then set $m_{i+1}:=F_{b_i}^{1+n_i}(m_i)\leq m$ with $(b_i,n_i)\in\mathbb N\times\mathbb N$ as large as possible, which is justified by Lemma~\ref{lem:F-basic}. More explicitly, we first maximize $b_i$ by stipulating $F_{b_i}(m_i)\leq m<F_{b_i+1}(m_i)$. Having fixed~$b_i$, we maximize $n_i$ analogously. By induction we now show that all numbers in our sequence are in Ackermann normal form, as expressed by
\begin{equation*}
m_i\nf F_{b_{i-1}}^{1+n_{i-1}}\circ\dots\circ F_{b_0}^{1+n_0}(1).
\end{equation*}
For $i=k$ this will yield the desired normal form of $m=m_k$. In view of the condition $b_{k-1}<\dots<b_0$, we will also be able to infer that the recursion terminates after~$k\leq b_0+1$ steps (which is not strictly required but still illuminating). It remains to carry out the induction. We first observe that $m_0\nf 1$ is in normal form, since the empty composition is the identity. For $i=1$ it suffices to observe that we have $n_0=0<1$, in view of $F^2_{b_0}(1)=F_{b_0+1}(1)$ and the maximality of~$b_0$. In the induction step from $m_i$ to $m_{i+1}$ with $i>0$, we need to establish $b_i<b_{i-1}$ as well as $n_i<m_i$. If the latter was false, then we would get
\begin{equation*}
F_{b_i+1}(m_i)=F_{b_i}^{1+m_i}(m_i)\leq F_{b_i}^{1+n_i}(m_i)=m_{i+1}\leq m,
\end{equation*}
contradicting the maximality of~$b_i$. Similarly, $b_i\geq b_{i-1}$ would yield
\begin{equation*}
F_{b_{i-1}}^{1+n_{i-1}+1}(m_{i-1})=F_{b_{i-1}}(m_i)\leq F_{b_i}^{1+n_i}(m_i)\leq m,
\end{equation*}
which contradicts the maximality of $n_{i-1}$ in the previous step of the construction. This completes the induction and hence the proof that normal forms exist. Since the lexicographic order is trichotomous, the uniqueness of normal forms and the direction~$\Rightarrow$ of the equivalence in the proposition reduce to the direction~$\Leftarrow$ of the same equivalence. In order to prove the latter, we consider an inequality
\begin{equation*}
\langle (b_0,n_0),\dots,(b_{k-1},n_{k-1})\rangle<_{\widehat 2(\mathbb N\times\mathbb N)}\langle (b_0',n_0'),\dots,(b_{l-1}',n_{l-1}')\rangle
\end{equation*}
between sequences that correspond to normal forms. The case where the right sequence extends the left is straightforward by Lemma~\ref{lem:F-basic}. Let us now assume that the inequality holds because we have $(b_j,n_j)<_{\mathbb N\times\mathbb N}(b_j',n_j')$ and all pairs with smaller index coincide. Writing $m_0:=F_{b_{j-1}}^{1+n_{j-1}}\circ\dots\circ F_{b_0}^{1+n_0}(1)$, it suffices to show
\begin{equation*}
F_{b_{k-1}}^{1+n_{k-1}}\circ\dots\circ F_{b_j}^{1+n_j}(m_0)<F_{b_j'}^{1+n_j'}(m_0).
\end{equation*}
Note that we have $n_j<m_0$, since we are concerned with normal forms. If the inequality $(b_j,n_j)<_{\mathbb N\times\mathbb N}(b_j',n_j')$ holds because we have $b_j<b_j'$, then we get
\begin{equation*}
F_{b_j}^{1+n_j+1}(m_0)\leq F_{b_j}^{1+m_0}(m_0)=F_{b_j+1}(m_0)\leq F_{b_j'}^{1+n_j'}(m_0).
\end{equation*}
The inequality between the outer terms (i.\,e.~without the intermediate steps) is also true when $(b_j,n_j)<_{\mathbb N\times\mathbb N}(b_j',n_j')$ holds because of~$b_j=b_j'$ and $n_j<n_j'$. By induction from~$i=j+1$ up to $i=k$, we shall now show
\begin{equation*}
F_{b_{i-1}}\circ F_{b_{i-1}}^{1+n_{i-1}}\circ\dots\circ F_{b_j}^{1+n_j}(m_0)\leq F_{b_j}^{1+n_j+1}(m_0).
\end{equation*}
For $i=k$, the left side is strictly bigger than in the desired inequality above, so that we get a strict inequality in the latter. In the induction step, the definition of normal forms yields $n_i<F_{b_{i-1}}^{1+n_{i-1}}\circ\dots\circ F_{b_j}^{1+n_j}(m_0)$ and hence
\begin{equation*}
F_{b_i}\circ F_{b_i}^{1+n_i}\circ F_{b_{i-1}}^{1+n_{i-1}}\circ\dots\circ F_{b_j}^{1+n_j}(m_0)\leq F_{b_i+1}\circ F_{b_{i-1}}^{1+n_{i-1}}\circ\dots\circ F_{b_j}^{1+n_j}(m_0).
\end{equation*}
In view of $b_i<b_{i-1}$ we can conclude by the induction hypothesis.
\end{proof}

Using our notion of normal form, we will now define a Goodstein dilator based on the Ackermann function. As preparation, we observe that Proposition~\ref{prop:Ack-NF} entails
\begin{equation*}
m\nf F_{b_{k-1}}^{1+n_{k-1}}\circ\dots\circ F_{b_0}^{1+n_0}(1)<F_b(1)\quad\Leftrightarrow\quad b_0<b\text{ or }k=0,
\end{equation*}
due to $F_b(1)\nf F_b^{1+0}(1)$. For $m$ as in the equivalence, we also note that $n_i<m$ holds for all $i<k$, which justifies induction and recursion over normal forms.

\begin{definition}[$\rca_0+\isigma_2$]\label{def:Ack-dil}
For each number~$b\in\mathbb N$ we put
\begin{equation*}
A(b):=\{0,\dots,F_b(1)-1\}.
\end{equation*}
Given a strictly increasing function $f:b=\{0,\dots,b-1\}\to\{0,\dots,b'-1\}=b'$, we define a function $A(f)$ with domain $A(b)$ by recursion, setting $A(f)(0)=0$ and
\begin{equation*}
A(f)\left(F_{b_{k-1}}^{1+n_{k-1}}\circ\dots\circ F_{b_0}^{1+n_0}(1)\right)=F_{f(b_{k-1})}^{1+A(f)(n_{k-1})}\circ\dots\circ F_{f(b_0)}^{1+A(f)(n_0)}(1),
\end{equation*}
where the argument is assumed to be in Ackermann normal form.
\end{definition}

Let us establish the promised result:

\begin{proposition}[$\rca_0+\isigma_2$]\label{prop:Ack-Goodstein-dil}
The constructions from Definition~\ref{def:Ack-dil} yield a Goodstein dilator~$A:\nat\to\lo$.
\end{proposition}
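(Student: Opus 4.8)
The plan is to check the four requirements in turn: that each $A(f)$ is a well-defined order embedding $A(b)\to A(b')$, that $A$ respects identities and composition, that clauses~(i) and~(iii) of Definition~\ref{def:Goodstein-dilator} hold essentially by inspection, and that $A$ preserves pullbacks via the criterion of Proposition~\ref{prop:pullback-nat-transf}. The heart of the matter is a single course-of-values induction on $m$, carried out in $\rca_0+\isigma_2$, establishing simultaneously for every strictly increasing $f:b\to b'$ and every $m\in A(b)$: the recursion of Definition~\ref{def:Ack-dil} terminates; the displayed expression for $A(f)(m)$ is itself an Ackermann normal form (hence, by Proposition~\ref{prop:Ack-NF}, \emph{the} normal form of $A(f)(m)$); $A(f)(m)\in A(b')$; and $A(f)$ is strictly increasing on $\{0,\dots,m\}$. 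The key point in the induction step is this. Write $m\nf F_{b_{k-1}}^{1+n_{k-1}}\circ\dots\circ F_{b_0}^{1+n_0}(1)$ and $M_i:=F_{b_{i-1}}^{1+n_{i-1}}\circ\dots\circ F_{b_0}^{1+n_0}(1)$; by Lemma~\ref{lem:F-basic}(a) we have $M_0<\dots<M_k=m$, so all the $n_i$ and all the $M_i$ (for $i<k$) lie below $m$ and the induction hypothesis applies to them. Since $M_i$'s normal form is the corresponding truncation of $m$'s, the induction hypothesis gives $A(f)(M_i)=F_{f(b_{i-1})}^{1+A(f)(n_{i-1})}\circ\dots\circ F_{f(b_0)}^{1+A(f)(n_0)}(1)$, and from $n_i<M_i$ together with monotonicity of $A(f)$ below $m$ we get $A(f)(n_i)<A(f)(M_i)$ --- which is exactly the coefficient inequality needed for the defining expression of $A(f)(m)$ to be a valid normal form. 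That $A(f)(m)<F_{b'}(1)$ then follows from the equivalence displayed just before Definition~\ref{def:Ack-dil} together with $f(b_0)\le b'-1$. Monotonicity of $A(f)$ on $\{0,\dots,m\}$ reduces, via Proposition~\ref{prop:Ack-NF}, to noting that the pair map $(\beta,n)\mapsto(f(\beta),A(f)(n))$ is $<_{\mathbb N\times\mathbb N}$-monotone below the relevant bound (because $f$ is strictly increasing and $A(f)$ is, by induction hypothesis, strictly increasing below $m$), and that applying it coordinatewise preserves the lexicographic order on $\widehat 2(\mathbb N\times\mathbb N)$. I expect this interleaved induction --- proving well-definedness, the normal-form property, and strict monotonicity at once --- to be the main technical obstacle; the remaining points are comparatively routine.

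Functoriality then follows by short inductions on $m$ using the normal-form description just obtained: $A(\operatorname{id}_b)(m)=m$ since the identity fixes each base $b_i$ and, inductively, each coefficient $n_i$; and $A(g\circ f)=A(g)\circ A(f)$ since on a normal form $A(g\circ f)$ replaces each $(b_i,n_i)$ by $(g(f(b_i)),A(g\circ f)(n_i))=(g(f(b_i)),A(g)(A(f)(n_i)))$ by the induction hypothesis, which is precisely $A(g)$ applied to the normal form $F_{f(b_{k-1})}^{1+A(f)(n_{k-1})}\circ\dots\circ F_{f(b_0)}^{1+A(f)(n_0)}(1)$ of $A(f)(m)$. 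Clauses~(i) and~(iii) of Definition~\ref{def:Goodstein-dilator} are immediate: $A(b)=\{0,\dots,F_b(1)-1\}$ is by construction an initial segment of $\mathbb N$ (nonempty, since $F_b(1)\ge 2$ by Lemma~\ref{lem:F-basic}(a)), we have $0\in A(0)=\{0,1\}$, and $A(\emp_b)(0)=0$ by the base clause of Definition~\ref{def:Ack-dil}.

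For clause~(ii), by Proposition~\ref{prop:pullback-nat-transf} it suffices to produce a suitable support transformation. I would define $\supp_b:A(b)\to[b]^{<\omega}$ by recursion on normal forms: $\supp_b(0):=\emptyset$, and for $m\nf F_{b_{k-1}}^{1+n_{k-1}}\circ\dots\circ F_{b_0}^{1+n_0}(1)$ put
\[
\supp_b(m):=\{b_0,\dots,b_{k-1}\}\cup\supp_b(n_0)\cup\dots\cup\supp_b(n_{k-1}),
\]
which lies in $[b]^{<\omega}$ since $b_0<b$ and, recursively, $\supp_b(n_i)\subseteq b$. Naturality, $\supp_{b'}(A(f)(m))=[f]^{<\omega}(\supp_b(m))$, is a direct induction using the normal-form description of $A(f)(m)$. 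For the range condition, given an embedding $f:b'\to b$, an element $m\in A(b)$, and $\supp_b(m)\subseteq\rng(f)$, one constructs a preimage by recursion on $m$: each $b_i\in\rng(f)$ has a unique $f$-preimage $\tilde b_i$, with $\tilde b_{k-1}<\dots<\tilde b_0$ since $f$ is strictly increasing; since $\supp_b(n_i)\subseteq\supp_b(m)\subseteq\rng(f)$, the recursion yields $\tilde n_i$ with $A(f)(\tilde n_i)=n_i$; and strict monotonicity of $A(f)$ together with uniqueness of normal forms (Proposition~\ref{prop:Ack-NF}) shows that $F_{\tilde b_{k-1}}^{1+\tilde n_{k-1}}\circ\dots\circ F_{\tilde b_0}^{1+\tilde n_0}(1)$ is a genuine element of $A(b')$ that $A(f)$ sends to $m$. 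Thus Proposition~\ref{prop:pullback-nat-transf} applies, $A$ preserves pullbacks, and $A$ is a Goodstein dilator.
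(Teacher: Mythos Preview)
Your proposal is correct and follows essentially the same route as the paper. The one organizational difference is the induction measure: you run a course-of-values induction on the numerical value $m$ (with ``$A(f)$ is monotone on $\{0,\dots,m\}$'' bundled into the statement), whereas the paper introduces a syntactic length function $L$ on normal forms and inducts on $L(m)+L(m')$ for the monotonicity claim; both work here because every coefficient $n_i$ is numerically below $m$, and the support construction and pullback verification are identical in the two arguments.
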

\begin{proof}
As a first major step, we show that $A(f)$ is order preserving for an arbitrary morphism $f:b\to b'$ in $\nat$. For this purpose we define a length function $L:\mathbb N\to\mathbb N$ by recursion over normal forms, setting $L(0)=0$ and
\begin{equation*}
L(m)=L(n_{k-1})+\dots+L(n_0)+k\quad\text{for}\quad m\nf F_{b_{k-1}}^{1+n_{k-1}}\circ\dots\circ F_{b_0}^{1+n_0}(1).
\end{equation*}
In order to show that $m<m'<F_b(1)$ entails $A(f)(m)<A(f)(m')$, we now argue by induction over $L(m)+L(m')$. For $m$ as in the definition of~$L$, the value
\begin{equation*}
A(f)(m)\nf F_{f(b_{k-1})}^{1+A(f)(n_{k-1})}\circ\dots\circ F_{f(b_0)}^{1+A(f)(n_0)}(1)
\end{equation*}
is still in normal form. Indeed, the crucial inequalities $n_i<F_{b_{i-1}}^{1+n_{i-1}}\circ\dots\circ F_{b_0}^{1+n_0}(1)$ for $i<k$ are preserved by the induction hypothesis, which applies due to
\begin{equation*}
L(n_i)+L(F_{b_{i-1}}^{1+n_{i-1}}\circ\dots\circ F_{b_0}^{1+n_0}(1))=L(n_i)+\dots+L(n_0)+i<L(m).
\end{equation*}
A given normal form of $m'$ is preserved in the same way. Combining this fact with Proposition~\ref{prop:Ack-NF} and the induction hypothesis, it is straightforward to deduce $A(f)(m)<A(f)(m')$ from~$m<m'$. As part of our inductive argument, we have shown that $A(f)$ preserves normal forms, which is central for much of the following. In particular, we can now observe that $A(f)$ has values in $A(b')$, still for $f:b\to b'$ (cf.~the paragraph before Definition~\ref{def:Ack-dil}). We can also deduce that $A$ is functorial, by a straightforward induction. To conclude that~$A$ is a Goodstein dilator, we must show that it preserves pullbacks. With Proposition~\ref{prop:pullback-nat-transf} in mind, we define functions $\supp_b:A(b)\to[b]^{<\omega}$ by recursion over normal forms, setting $\supp_b(0)=\emptyset$ and
\begin{equation*}
\supp_b(F_{b_{k-1}}^{1+n_{k-1}}\circ\dots\circ F_{b_0}^{1+n_0}(1))=\{b_0,\dots,b_{k-1}\}\cup\supp_b(n_0)\cup\dots\cup\supp_b(n_{k-1}).
\end{equation*}
Given that the morphisms $A(f)$ preserve normal forms, a straightforward induction shows that $\supp:A\to[\cdot]^{<\omega}$ is a natural transformation. For $f:b'\to b$, the required implication
\begin{equation*}
\supp_b(m)\subseteq\rng(f)\quad\Rightarrow\quad m\in\rng(A(f))
\end{equation*}
can be shown by induction over the normal form of $m\nf F_{b_{k-1}}^{1+n_{k-1}}\circ\dots\circ F_{b_0}^{1+n_0}(1)$. For $i<k$ the induction hypothesis allows us to write $n_i=A(f)(n_i')$ with $n_i'\in A(b')$. We also get $b_i=f(b_i')$ with $b_i'<b'$. By induction on~$i$ one can show that
\begin{equation*}
m'_i:=F_{b'_{i-1}}^{1+n'_{i-1}}\circ\dots\circ F_{b'_0}^{1+n'_0}(1)\in A(b')
\end{equation*}
is in normal form. The induction hypothesis allows us to compute $A(f)(m'_i)$ as
\begin{equation*}
A(f)(m'_i)=F_{b_{i-1}}^{1+n_{i-1}}\circ\dots\circ F_{b_0}^{1+n_0}(1)>n_i=A(f)(n_i'),
\end{equation*}
where the inequality comes from the normal form condition for~$m$. We can infer $n'_i<m'_i$, as required for the induction step. For $i=k$, the given normal form of~$m_k'\in A(b')$ reveals $m=A(f)(m'_k)\in\rng(A(f))$, as desired. By Proposition~\ref{prop:pullback-nat-transf}, we can now conclude that $A$ preserves pullbacks and is thus a Goodstein dilator.
\end{proof}

Now that $A:\nat\to\lo$ is defined as a Goodstein dilator, Definition~\ref{def:gen-Goodstein-sequence} yields Goodstein sequences $G^A_{b,c,m}(0),G^A_{b,c,m}(1),\dots$ that are based on the Ackermann function. By Theorem~\ref{thm:Goodstein-dilator}, the corresponding Goodstein theorem is closely related to the extension $\overline A:\lo\to\lo$. In the following we investigate the latter. As we have seen in the previous section, the general construction of extensions (according to Definition~\ref{def:dil-extend}) can be hard to understand in concrete cases (cf.~Example~\ref{ex:2-Goodstein-extend} and the paragraph that preceeds it). Rather than pondering over the definition of~$\overline A$, we will thus give an ad hoc definition of a plausible extension~$\widehat A:\lo\to\lo$. Using Proposition~\ref{prop:determine-extension}, we will then confirm that $\widehat A$ and $\overline A$ are isomorphic. Since the construction of $\widehat A$ will be based on term representations (rather than numerical values of the Ackermann function), it can be implemented in~$\rca_0$. However, we will need to switch back to the stronger base theory $\rca_0+\isigma_2$ in order to consider the isomorphism $\widehat A\cong\overline A$.

\begin{definition}[$\rca_0$]\label{def:hat-A-objects}
For each linear order~$X$, we define a set $\widehat A(X)$ with a binary relation $<_{\widehat A(X)}$ by simultaneous recursion (cf.~the justification below):
\begin{enumerate}[label=(\roman*)]
\item The term $0$ is an element of~$\widehat A(X)$.
\item Given elements $x_{k-1}<_X\dots<_X x_0$ of $X$ and terms $s_0,\dots,s_{k-1}\in\widehat A(X)$, we add terms $\chi_{x_{j-1}}^{s_{j-1}}\circ\dots\circ\chi_{x_0}^{s_0}(1)\in\widehat A(X)$ for $j\leq k$, as long as we have $s_i<_{\widehat A(X)}\chi_{x_{i-1}}^{s_{i-1}}\circ\dots\circ\chi_{x_0}^{s_0}(1)$ for all $i<j$ (in particular~$1\in\widehat A(X)$).
\end{enumerate}
The term $0$ is the minimum element of $\widehat A(X)$, and we have
\begin{multline*}
\chi_{x_{k-1}}^{s_{k-1}}\circ\dots\circ\chi_{x_0}^{s_0}(1)<_{\widehat A(X)} \chi_{y_{l-1}}^{t_{l-1}}\circ\dots\circ\chi_{y_0}^{t_0}(1)\Leftrightarrow\\
\langle (x_0,s_0),\dots,(x_{k-1},s_{k-1})\rangle<_{\widehat 2(X\times\widehat A(X))} \langle (y_0,t_0),\dots,(y_{l-1},t_{l-1})\rangle,
\end{multline*}
where $<_{\widehat 2(X\times\widehat A(X))}$ is the lexicographic order with respect to the usual product order on the set $X\times\widehat A(X)$ (cf.~the paragraph before Theorem~\ref{thm:Ackermann-Veblen} in the introduction). Given an order embedding $f:X\to Y$, we define a function $\widehat A(f):\widehat A(X)\to\widehat A(Y)$ (cf.~the discussion below) by recursion over terms, setting $\widehat A(f)(0)=0$ and
\begin{equation*}
\widehat A(f)\left(\chi_{x_{k-1}}^{s_{k-1}}\circ\dots\circ\chi_{x_0}^{s_0}(1)\right)=\chi_{f(x_{k-1})}^{\widehat A(f)(s_{k-1})}\circ\dots\circ\chi_{f(x_0)}^{\widehat A(f)(s_0)}(1).
\end{equation*}
\end{definition}

To justify the given recursion in detail, we first construct sets $\widehat A^0(X)\supseteq\widehat A(X)$ by ignoring the condition $s_i<_{\widehat A(X)}\chi_{x_{i-1}}^{s_{i-1}}\circ\dots\circ\chi_{x_0}^{s_0}(1)$ in~(ii). We then define a length function $L:\widehat A(X)^0\to\mathbb N$ by setting $L(0)=0$ and
\begin{equation*}
L\left(\chi_{x_{k-1}}^{s_{k-1}}\circ\dots\circ\chi_{x_0}^{s_0}(1)\right)=L(s_{k-1})+\dots+L(s_0)+k.
\end{equation*}
Let us point out the analogy with the proof of Proposition~\ref{prop:Ack-Goodstein-dil} (which justifies the use of the same letter~$L$). For terms $s,t_0,t_1\in\widehat A^0 (X)$ we can now decide $s\in\widehat A(X)$ and $t_0<_{\widehat A(X)}t_1$ by simultaneous recursion on $L(s)$ and $L(t_0)+L(t_1)$, respectively. For $f:X\to Y$, the values of $\widehat A(f)$ will certainly lie in~$\widehat A^0(Y)$, but it is not immediately clear whether they lie in~$\widehat A(Y)$. As part of the following proof, we show that they do.

\begin{proposition}[$\rca_0$]
Definition~\ref{def:hat-A-objects} yields a functor~$\widehat A:\lo\to\lo$.
\end{proposition}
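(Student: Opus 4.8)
The plan is to verify three things in turn: that $(\widehat A(X),<_{\widehat A(X)})$ is a linear order for every $X\in\lo$; that for every order embedding $f\colon X\to Y$ the function $\widehat A(f)$ actually takes values in $\widehat A(Y)$ (and not merely in the auxiliary set $\widehat A^0(Y)$) and is an order embedding; and that $\widehat A$ preserves identities and composition. Throughout, the recursions are exactly the ones justified in the paragraph following Definition~\ref{def:hat-A-objects} via $\widehat A^0(X)$ and the length function $L$, and all the inductions below run on $L$-values, so everything is available in $\rca_0$.

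First I would show $\widehat A(X)\in\lo$. One argues by induction on the total $L$-length of the terms involved that $<_{\widehat A(X)}$ is irreflexive, transitive and trichotomous. The key observation is that comparing two nonzero terms $\chi_{x_{k-1}}^{s_{k-1}}\circ\dots\circ\chi_{x_0}^{s_0}(1)$ and $\chi_{y_{l-1}}^{t_{l-1}}\circ\dots\circ\chi_{y_0}^{t_0}(1)$ is, by definition, the lexicographic comparison of the associated descending sequences in $X\times\widehat A(X)$, in which every entry $s_i$ or $t_j$ has strictly smaller $L$-length than the term it comes from. By the induction hypothesis, $<_{\widehat A(X)}$ is therefore already a linear order on the set of those entries; hence the relevant portion of the product order on $X\times\widehat A(X)$ is a linear order (using that $X$ is one), and so is the lexicographic order $<_{\widehat 2(X\times\widehat A(X))}$ on descending sequences. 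Together with the stipulation that $0$ is the minimum, this shows $\widehat A(X)$ is a linear order.

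For the action on morphisms I would follow the argument in the proof of Proposition~\ref{prop:Ack-Goodstein-dil} essentially verbatim, replacing $A$ by $\widehat A$ and Proposition~\ref{prop:Ack-NF} by the defining clauses of $<_{\widehat A(X)}$. By simultaneous induction on $L$-length one proves that $\widehat A(f)$ preserves normal forms — i.e.\ for $s=\chi_{x_{k-1}}^{s_{k-1}}\circ\dots\circ\chi_{x_0}^{s_0}(1)\in\widehat A(X)$ the term $\widehat A(f)(s)=\chi_{f(x_{k-1})}^{\widehat A(f)(s_{k-1})}\circ\dots\circ\chi_{f(x_0)}^{\widehat A(f)(s_0)}(1)$ again lies in $\widehat A(Y)$ — and that $s<_{\widehat A(X)}s'$ implies $\widehat A(f)(s)<_{\widehat A(Y)}\widehat A(f)(s')$. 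The first assertion uses that $f$ is an embedding, so $x_{k-1}<_X\dots<_X x_0$ gives $f(x_{k-1})<_Y\dots<_Y f(x_0)$, together with the induction hypothesis applied to the subterms, which preserves the defining inequalities $s_i<_{\widehat A(X)}\chi_{x_{i-1}}^{s_{i-1}}\circ\dots\circ\chi_{x_0}^{s_0}(1)$ from clause~(ii). The second assertion then follows from the lexicographic characterization of both orders, using that $f$ both preserves and reflects $<_X$ and that $\widehat A(f)$ is order-preserving on the shorter subterms. Combining order-preservation with the trichotomy established above, $\widehat A(f)$ is injective, hence an order embedding, i.e.\ a morphism of $\lo$.

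Finally, $\widehat A(\mathrm{id}_X)=\mathrm{id}_{\widehat A(X)}$ and $\widehat A(g\circ f)=\widehat A(g)\circ\widehat A(f)$ are each a direct induction over terms, the latter using $(g\circ f)(x)=g(f(x))$ in the exponent labels. The only genuine obstacle is the well-definedness in the morphism step: that $\widehat A(f)$ preserves the normal-form condition of clause~(ii) depends on $\widehat A(f)$ being order-preserving on subterms, and conversely, so the two statements must be proved together by a single interleaved induction on length. This is precisely the pattern already carried out for $A$ in Proposition~\ref{prop:Ack-Goodstein-dil}, so I expect it to transfer here with only cosmetic changes.
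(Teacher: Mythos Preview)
Your proposal is correct and follows essentially the same approach as the paper's proof: establish linearity of $<_{\widehat A(X)}$ by induction on $L$-values (irreflexivity, trichotomy, transitivity, each with the appropriate sum of lengths), then prove membership $\widehat A(f)(s)\in\widehat A(Y)$ and order-preservation of $\widehat A(f)$ by a simultaneous induction on $L(s)$ and $L(t_0)+L(t_1)$, and finally verify functoriality by a routine term induction. Your identification of the key interdependence---that preservation of the clause~(ii) inequalities requires order-preservation on subterms, forcing the simultaneous induction---matches exactly the point the paper highlights.
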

\begin{proof}
To show that $<_{\widehat A(X)}$ is linear for each linear order~$X$, one checks $s\not<_{\widehat A(X)}s$ by induction on $L(s)$, then $s<_{\widehat A(X)}t\,\lor\, s=t\,\lor\, t<_{\widehat A(X)}s$ by induction on $L(s)+L(t)$, and finally $r<_{\widehat A(X)}s\,\land\, s<_{\widehat A(X)} t\,\to\, r<_{\widehat A(X)}t$ by induction on $L(r)+L(s)+L(t)$. Essentially, this amounts to the usual proof that the lexicographic order with respect to a linear order~$Z$ is linear itself, except that the assumption on~$Z$ is replaced by the induction hypothesis. Now consider a morphism $f:X\to Y$ in~$\lo$, i.\,e.~an order embedding. To establish the implications
\begin{align*}
s\in\widehat A(X)\,&\to\,\widehat A(f)(s)\in\widehat A(Y),\\
t_0<_{\widehat A(X)}t_1\,&\to\,\widehat A(f)(t_0)<_{\widehat A(Y)}\widehat A(f)(t_1),
\end{align*}
one argues by simultaneous induction on $L(s)$ and~$L(t_0)+L(t_1)$, respectively. The point is that the inequalities $s_i<_{\widehat A(X)}\chi_{x_{i-1}}^{s_{i-1}}\circ\dots\circ\chi_{x_0}^{s_0}(1)$ required in clause~(ii) of Definition~\ref{def:hat-A-objects} are preserved due to the simultaneous induction hypothesis. Functoriality is readily established by induction over the build-up of terms.
\end{proof}

The following was promised (and explained) before Definition~\ref{def:hat-A-objects}.

\begin{proposition}[$\rca_0+\isigma_2$]\label{prop:iso-A-hat-bar}
There is a natural isomorphism $\widehat A\cong\overline A$.
\end{proposition}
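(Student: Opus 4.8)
The plan is to deduce the isomorphism from Proposition~\ref{prop:determine-extension}, applied with $\widehat D:=\widehat A$ and $D:=A$. Thus it suffices to verify the two hypotheses of that proposition: (i) there is a natural isomorphism between $A$ and the restriction $\widehat A\!\restriction\!\nat$, and (ii) there is a natural transformation $\widehat\supp:\widehat A\Rightarrow[\cdot]^{<\omega}$ such that $\widehat\supp_Y(\sigma)\subseteq\rng(f)$ implies $\sigma\in\rng(\widehat A(f))$ for every order embedding $f:X\to Y$ and every $\sigma\in\widehat A(Y)$. Both are the expected lifts of facts already established for $A$ in the proof of Proposition~\ref{prop:Ack-Goodstein-dil}; the only new ingredient is the translation between term representations in $\widehat A$ and Ackermann normal forms in~$A$.

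For~(i) I would define components $\eta^0_b:A(b)\to\widehat A(b)$ by recursion over Ackermann normal forms, setting $\eta^0_b(0)=0$ and
\[
\eta^0_b\bigl(F_{b_{k-1}}^{1+n_{k-1}}\circ\dots\circ F_{b_0}^{1+n_0}(1)\bigr)=\chi_{b_{k-1}}^{\eta^0_b(n_{k-1})}\circ\dots\circ\chi_{b_0}^{\eta^0_b(n_0)}(1).
\]
This is well defined: by the equivalence recorded before Definition~\ref{def:Ack-dil}, a number $m<F_b(1)$ has normal form with subscripts $b_i<b$, which matches the requirement $x_i\in b$ in clause~(ii) of Definition~\ref{def:hat-A-objects}; and, by the order equivalence of Proposition~\ref{prop:Ack-NF}, the normal-form side condition $n_i<F_{b_{i-1}}^{1+n_{i-1}}\circ\dots\circ F_{b_0}^{1+n_0}(1)$ translates under $\eta^0_b$ into precisely the side condition $s_i<_{\widehat A(b)}\chi_{x_{i-1}}^{s_{i-1}}\circ\dots\circ\chi_{x_0}^{s_0}(1)$ of that clause. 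Comparing Proposition~\ref{prop:Ack-NF} with the order clause of Definition~\ref{def:hat-A-objects} shows $\eta^0_b$ is an order isomorphism (for surjectivity: every term of $\widehat A(b)$ has subscripts $<b$, and its side conditions translate back to valid normal-form conditions). Naturality, i.e.\ $\widehat A(f)\circ\eta^0_b=\eta^0_{b'}\circ A(f)$ for $f:b\to b'$, follows by induction over normal forms, since the recursive clause for $A(f)$ in Definition~\ref{def:Ack-dil} and the one for $\widehat A(f)$ in Definition~\ref{def:hat-A-objects} both apply $f$ to the subscripts and recurse on the superscripts.

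For~(ii) I would set $\widehat\supp_X(0)=\emptyset$ and
\[
\widehat\supp_X\bigl(\chi_{x_{k-1}}^{s_{k-1}}\circ\dots\circ\chi_{x_0}^{s_0}(1)\bigr)=\{x_0,\dots,x_{k-1}\}\cup\widehat\supp_X(s_0)\cup\dots\cup\widehat\supp_X(s_{k-1}),
\]
exactly as the family $\supp_b$ in the proof of Proposition~\ref{prop:Ack-Goodstein-dil}, but now over an arbitrary linear order~$X$. Naturality is a routine induction over terms, using that $\widehat A(f)$ applies $f$ to the subscripts. For the range condition, assume $\widehat\supp_Y(\sigma)\subseteq\rng(f)$ with $\sigma=\chi_{y_{k-1}}^{t_{k-1}}\circ\dots\circ\chi_{y_0}^{t_0}(1)\neq 0$, and argue by induction on the length $L(\sigma)$: each $y_i\in\rng(f)$, say $y_i=f(x_i)$, and since $f$ is an embedding we get $x_{k-1}<_X\dots<_X x_0$; each $t_i$ satisfies $\widehat\supp_Y(t_i)\subseteq\rng(f)$, so the induction hypothesis yields $t_i=\widehat A(f)(s_i)$ for some $s_i\in\widehat A(X)$. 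The side conditions $s_i<_{\widehat A(X)}\chi_{x_{i-1}}^{s_{i-1}}\circ\dots\circ\chi_{x_0}^{s_0}(1)$ then hold, because applying the order embedding $\widehat A(f)$ would turn them into the true inequalities witnessing $\sigma\in\widehat A(Y)$, and $\widehat A(f)$ is injective and order-reflecting (being strictly increasing into a linear order). Hence $\chi_{x_{k-1}}^{s_{k-1}}\circ\dots\circ\chi_{x_0}^{s_0}(1)$ is a genuine element of $\widehat A(X)$, and $\widehat A(f)$ sends it to $\sigma$. This is the exact analogue of the corresponding step for $\supp_b$.

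The only real subtlety — and the reason for the base theory $\rca_0+\isigma_2$ — is that $\widehat A$ and all of Definition~\ref{def:hat-A-objects} are available already in $\rca_0$, whereas the translation $\eta^0$ mentions actual values $F_b(1)$, whose totality requires $\isigma_2$, and its order-preservation is precisely Proposition~\ref{prop:Ack-NF}, again proved over $\rca_0+\isigma_2$. No further use of the extra induction is needed. With hypotheses~(i) and~(ii) established, Proposition~\ref{prop:determine-extension} delivers the natural isomorphism $\widehat A\cong\overline A$.
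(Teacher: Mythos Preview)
Your proposal is correct and follows essentially the same approach as the paper: both invoke Proposition~\ref{prop:determine-extension}, define the support functions $\widehat\supp$ by the obvious recursive clause on terms, and verify the range condition by induction on~$L$. The only cosmetic difference is that you construct the natural isomorphism in~(i) as a map $\eta^0_b:A(b)\to\widehat A(b)$ by recursion over Ackermann normal forms, while the paper goes the other way, defining $\mu_b:\widehat A(b)\to A(b)$ by recursion over terms and then proving surjectivity by recovering each number from its normal form; the content is the same, and your remark that $\widehat\supp$ must be defined for arbitrary linear orders~$X$ (not just objects of~$\nat$) is in fact more faithful to what condition~(ii) of Proposition~\ref{prop:determine-extension} requires.
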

\begin{proof}
We use the criterion from Proposition~\ref{prop:determine-extension}. Condition~(i) of the latter requires an isomorphism $\mu:\widehat A\!\restriction\!\nat\Rightarrow A$. For each object $b=\{0,\dots,b-1\}$ in $\nat$ we define the component $\mu_b:\widehat A(b)\to A(b)$ by recursion over terms: Set $\mu_b(0)=0$ and
\begin{equation*}
\mu_b\left(\chi_{b_{k-1}}^{s_{k-1}}\circ\dots\circ\chi_{b_0}^{s_0}(1)\right)=F_{b_{k-1}}^{1+\mu_b(s_{k-1})}\circ\dots\circ F_{b_0}^{1+\mu_b(s_0)}(1),
\end{equation*}
where the right side is evaluated as a natural number (which is possible in our strengthened base theory). To show that $s<_{\widehat A(b)}t$ implies $\mu_b(s)<\mu_b(t)$, one argues by induction on~$L(s)+L(t)$. Crucially, the induction hypothesis ensures that the recursive clause provides $\mu_b(s)$ and $\mu_b(t)$ in Ackermann normal form. This fact has several important consequences: First, the induction step can now be completed by a straightforward application of Proposition~\ref{prop:Ack-NF}. Secondly, the latter also ensures that $\mu_b$ takes values in \mbox{$A(b)=\{0,\dots,F_b(1)-1\}$}, as implicitly claimed above. Finally, we can deduce that $\mu$ is natural, by a straightforward induction over terms. To conclude that $\mu$ is a natural isomorphism, we show that each number
\begin{equation*}
m\nf F^{1+n_{k-1}}_{b_{k-1}}\circ\dots\circ F^{1+n_0}_{b_0}(1)\in\{0,\dots,F_b(1)-1\}=A(b)
\end{equation*}
lies in the range of the component~$\mu_b$. Arguing by induction over the numerical value of~$m$, we may write $n_i=\mu_b(s_i)$ with $s_i\in\widehat A(b)$ for each $i<k$. By side induction on~$i$ we obtain~$\chi^{s_{i-1}}_{b_{i-1}}\circ\dots\circ\chi^{s_0}_{b_0}(1)\in\widehat A(b)$. Let us point out that the inequality $s_i<_{\widehat A(b)}\chi^{s_{i-1}}_{b_{i-1}}\circ\dots\circ\chi^{s_0}_{b_0}(1)$, which is required for the side induction step, reduces to the condition $n_i<F_{b_{i-1}}^{1+n_{i-1}}\circ\dots\circ F_{b_0}^{1+n_0}(1)$ from Definition~\ref{def:Ack-normal-form}, since~$\mu_b$ is an order embedding. For $i=k$ we get
\begin{equation*}
m=\mu_b\left(\chi^{s_{k-1}}_{b_{k-1}}\circ\dots\circ\chi^{s_0}_{b_0}(1)\right)\in\rng(\mu_b),
\end{equation*}
as desired. In order to satisfy condition~(ii) of Proposition~\ref{prop:determine-extension}, we now define functions $\widehat\supp_b:\widehat A(b)\to[b]^{<\omega}$ by the recursive clauses $\widehat\supp_b(0)=\emptyset$ and
\begin{equation*}
\widehat\supp_b\left(\chi^{s_{k-1}}_{b_{k-1}}\circ\dots\circ\chi^{s_0}_{b_0}(1)\right)=\{b_0,\dots,b_{k-1}\}\cup\widehat\supp_b(s_0)\cup\dots\cup\widehat\supp_b(s_{k-1}).
\end{equation*}
Given a morphism $f:b'\to b$, the required implication
\begin{equation*}
\widehat\supp_b(s)\subseteq\rng(f)\quad\Rightarrow\quad s\in\rng(\widehat A(f))
\end{equation*}
can be established by induction on the build-up of~$s\in\widehat A(b)$, similarly to the proof of Proposition~\ref{prop:Ack-Goodstein-dil}. Now that we have verified all relevant conditions, Proposition~\ref{prop:determine-extension} provides the desired isomorphism $\widehat A\cong\overline A$.
\end{proof}

As in the previous proof, one can construct functions $\widehat\supp_X:\widehat A(X)\to[X]^{<\omega}$ for all linear orders~$X$, such that an implication as in Proposition~\ref{prop:determine-extension} is satisfied. It follows that the functor~$\widehat A:\lo\to\lo$ preserves pullbacks and direct limits, essentially by~\cite[Theorem~2.3.12]{girard-pi2} (see also~\cite[Remark~2.2.2]{freund-thesis} and Proposition~\ref{prop:pullback-nat-transf} above). Over a sufficiently strong base theory, Theorem~\ref{thm:Goodstein-ATR} (proved below) ensures that $\widehat A(X)\cong\overline A(X)$ is well founded for any well order~$X$. This means that~$\widehat A$ is a dilator in the usual sense.

Our next objective is to relate the functor~$\widehat A$ to the Veblen hierarchy of normal functions. Recall that a function~$f$ from ordinals to ordinals is normal if it is strictly increasing and continuous at limit stages, in the sense that $f(\lambda)=\sup\{f(\alpha)\,|\,\alpha<\lambda\}$ holds for any limit ordinal~$\lambda$. Equivalently, $f$ is the strictly increasing enumeration of a closed and unbounded (club) class of ordinals. If $f$ is a normal function, then the class $\{\gamma\,|\,f(\gamma)=\gamma\}$ of fixed points is itself club. The normal function that enumerates this class is called the derivative~$f'$ of~$f$. The Veblen hierarchy of functions $\varphi_\alpha$ iterates this construction along the ordinals: We have $\varphi_0(\gamma)=\omega^\gamma$, as defined in ordinal arithmetic; the function $\varphi_{\alpha+1}$ is the derivative of~$\varphi_\alpha$; and if $\lambda$ is a limit, then $\varphi_\lambda$ enumerates the class $\{\gamma\,|\,\varphi_\alpha(\gamma)=\gamma\text{ for all $\alpha<\lambda$}\}$, which is also club. Due to the construction we have $\varphi_\alpha\circ\varphi_\beta=\varphi_\beta$ for $\alpha<\beta$. Together with monotonicity, one can conclude
\begin{equation}\label{eq:ineq-Veblen}\tag{$\star$}
\varphi_\alpha(\gamma)<\varphi_\beta(\delta)\quad\Leftrightarrow\quad
\begin{cases}
\text{either} &\alpha<\beta\text{ and }\gamma<\varphi_\beta(\delta),\\
\text{or} & \alpha=\beta\text{ and }\gamma<\delta,\\
\text{or} & \alpha>\beta\text{ and }\varphi_\alpha(\gamma)<\delta.
\end{cases}
\end{equation}
This equivalence suggests a recursive definition of inequality on a suitable set of terms. In the present paper, we consider term representations for the Veblen hierarchy along a fixed linear order. Let us recall that $1+X$ represents the extension of the order~$X$ by a new minimum element, which we denote by~$0$. The idea is to define a term system $\varphi_{1+X}0$ with a binary relation $<_{\varphi_{1+X}0}$ and an auxiliary function $h:\varphi_{1+X}0\to 1+X$ by simultaneous recursion:
\begin{itemize}
\item The term~$0$ is an element of~$\varphi_{1+X}0$, and we have $h(0)=0$.
\item Given a term $s\in\varphi_{1+X}0$ and an element $x\in 1+X$ with $h(s)\leq_{1+X}x$, we add a term $\varphi_xs\in\varphi_{1+X}0$ with $h(\varphi_xs)=x$.
\item Given $n>1$ terms $\varphi_{x_{n-1}}s_{n-1}\leq_{\varphi_{1+X}0}\dots\leq_{\varphi_{1+X}0}\varphi_{x_0}s_0$ of the indicated form, we add a term $\varphi_{x_0}s_0+\dots+\varphi_{x_{n-1}}s_{n-1}=:s\in\varphi_{1+X}0$ with $h(s)=0$.
\end{itemize}
The simultaneous definition of~$<_{\varphi_{1+X}0}$ reflects inequality~(\ref{eq:ineq-Veblen}) and the idea that terms of the form $\varphi_xs$ are additively principal, in the sense that \mbox{$\varphi_{x_0}s_0<_{\varphi_{1+X}0}\varphi_xs$} entails $\varphi_{x_0}s_0+\dots+\varphi_{x_{n-1}}s_{n-1}<_{\varphi_{1+X}0}\varphi_xs$. For a complete list of recursive clauses we refer to~\cite[Section~2]{rathjen-weiermann-atr} (where $Q=1+X$ and $0_Q=0\in1+X$). In the cited reference it is shown that $\varphi_{1+X}0$ is a linear order when the same holds for~$X$, provably in~$\rca_0$. To explain the role of~$h$, we point out that $\varphi_x\varphi_y0$ and $\varphi_y0$ should have the same interpretation if $x<_{1+X}y$. In view of \mbox{$h(\varphi_y0)=y\not\leq_{1+X}x$}, the ``superfluous" term $\varphi_x\varphi_y0$ is excluded from~$\varphi_{1+X}0$. Before we relate the transformation $X\mapsto\varphi_{1+X}0$ to our functor~$\widehat A:\lo\to\lo$, we reflect on the definition of the latter:

\begin{remark}\label{rmk:chi-semantics}
In most cases, ordinal notation systems are discussed on a semantic level first: One considers set-theoretic constructions on the actual ordinals and proves characteristic properties, such as our equivalence~(\ref{eq:ineq-Veblen}). Only in a second step, these properties are reproduced on a syntactic level. Our presentation of the notation systems~$\varphi_{1+X}0$ provides an example of this approach. As another example, the Cantor normal form theorem motivates the most common notation system for the ordinal~$\varepsilon_0=\min\{\alpha\,|\,\omega^\alpha=\alpha\}$. On the other hand, the notation systems $\widehat A(X)$ from Definition~\ref{def:hat-A-objects} were introduced in a different way: There is still a semantic aspect, since we have started by investigating the fast growing hierarchy of functions~$F_b:\mathbb N\to\mathbb N$ with~$b\in\mathbb N$. However, the extension from natural numbers to arbitrary ordinals (in fact to linear orders) was based on formal syntactic rather than semantic considerations. To complete the picture, we now sketch a corresponding semantic construction, even though the latter will play no official role in the following. Given a function $\chi$ from ordinals to ordinals, one can define ordinal iterates by the recursive clauses
\begin{equation*}
\chi^0(\gamma)=\gamma,\quad\chi^{\alpha+1}(\gamma)=\chi(\chi^\alpha(\gamma)),\quad\chi^\lambda(\gamma)=\sup\{\chi^\alpha(\gamma)\,|\,\alpha<\lambda\}\text{ for $\lambda$ limit}.
\end{equation*}
We point out that the limit case is most natural when $\alpha\mapsto\chi^\alpha(\gamma)$ is weakly (or strictly) increasing, which is the case when we have $\gamma\leq\chi(\gamma)$ (or $\gamma<\chi(\gamma)$) for all ordinals~$\gamma$. We now define a hierarchy of functions $\chi_\alpha$ by stipulating
\begin{equation*}
\chi_0(\gamma)=\gamma+1,\quad\chi_{\alpha+1}(\gamma)=\chi_\alpha^{1+\gamma}(\gamma),\quad\chi_\lambda(\gamma)=\sup\{\chi_\alpha(\gamma)\,|\,\alpha<\lambda\}\text{ for $\lambda$ limit}.
\end{equation*}
Note that this extends the finite (but not the infinite) stages of the fast-growing hierarchy, in the sense that we have $\chi_b(n)=F_b(n)$ for $b,n\in\mathbb N$. The properties from Lemma~\ref{lem:F-basic} change slightly (note~$\chi_\omega(n)=\omega$ for every~$n\in\mathbb N\backslash\{0\}$): We have $\gamma<\chi_\alpha(\gamma)$, each function $\chi_\alpha$ is weakly increasing, and $\alpha\mapsto\chi_\alpha(\gamma)$ is strictly increasing (hence normal) in case~$\gamma>0$. By generalizing Definition~\ref{def:Ack-normal-form} in the obvious way, we obtain a notion of normal form
\begin{equation*}
\delta\nf\chi^{1+\gamma_{k-1}}_{\alpha_{k-1}}\circ\dots\circ\chi^{1+\gamma_0}_{\alpha_0}(1).
\end{equation*}
The analogue of Proposition~\ref{prop:Ack-NF} can be established by essentially the same proof: For $\delta'<\delta$ we can still pick $(\alpha,\gamma)$ maximal with $\chi_\alpha^{1+\gamma}(\delta')\leq\delta$, due to continuity in~$\alpha$ and~$\gamma$. Arguing as in the proof of Proposition~\ref{prop:iso-A-hat-bar}, one can deduce $\widehat A(\alpha)\cong\chi_\alpha(1)$, where the left side is explained by Definition~\ref{def:hat-A-objects} and ordinals are identified with the ordered sets of their predecessors. The reader may have noticed that we write elements of $\widehat A(\alpha)$ as $\chi^{s_{k-1}}_{\alpha_{k-1}}\circ\dots\circ\chi^{s_0}_{\alpha_0}(1)$ rather than $\chi^{1+s_{k-1}}_{\alpha_{k-1}}\circ\dots\circ\chi^{1+s_0}_{\alpha_0}(1)$, with a summand~$1$ missing in the exponents. This is pure notational convenience (and harmless, since the concrete way in which terms are displayed is irrelevant).
\end{remark}

Let us now compare the functor~$\widehat A:\lo\to\lo$ with the notation systems that represent the Veblen hierarchy. Together with Proposition~\ref{prop:iso-A-hat-bar}, the following result yields the first inequality of Theorem~\ref{thm:Ackermann-Veblen} from the introduction.

\begin{proposition}[$\rca_0$]\label{prop:A-into-phi}
For any linear order~$X$, there is an order embedding of $\widehat A(X)$ into $\varphi_{1+X}0$.
\end{proposition}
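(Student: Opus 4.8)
The plan is to build the embedding by a recursion that mirrors, clause for clause, the recursive generation of $\widehat A(X)$ in Definition~\ref{def:hat-A-objects} against the recursive generation of the notation system $\varphi_{1+X}0$ recalled just above. Write $\hat x\in 1+X$ for the image of $x\in X$ under the inclusion $X\hookrightarrow 1+X$. I would define $\iota=\iota_X\colon\widehat A(X)\to\varphi_{1+X}0$ by recursion over the build-up of terms, with $\iota(0)=0$, with $\iota(1)$ equal to the term $\varphi_z 0$ for $z$ the new minimum of $1+X$, and with
\[
\iota\bigl(\chi_{x_{k-1}}^{s_{k-1}}\circ\dots\circ\chi_{x_0}^{s_0}(1)\bigr)
\quad\text{(so }s_0=0\text{ and }x_{k-1}<_X\dots<_X x_0\text{)}
\]
a Veblen term whose leading summand is $\varphi_{\hat x_0}0$ and whose remaining summands are assembled, in the order of decreasing $x_i$, from $\varphi_{\hat x_i}$ applied to (a function of) the recursively given value $\iota(s_i)$; the implicit shift by $1$ in the exponents noted in Remark~\ref{rmk:chi-semantics} has to be absorbed here. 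The recursion is justified by the length function $L$ exactly as after Definition~\ref{def:hat-A-objects}, and everything takes place over the syntactic term systems in $\rca_0$.

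Two things then have to be checked, both by induction on the relevant value of $L$. First, that $\iota(m)$ is genuinely a term of $\varphi_{1+X}0$: for every subterm $\varphi_y u$ produced one needs the side condition $h(u)\leq_{1+X}y$, and every formal sum occurring in $\iota(m)$ must be a weakly decreasing list of additively principal terms. The engine for both is an auxiliary domination lemma, proved simultaneously: \emph{if every $\chi$-index occurring in $m\in\widehat A(X)$ is $<_X y$, then $\iota(m)<_{\varphi_{1+X}0}\varphi_{\hat y}0$.} The point is that the normal-form inequality $s_i<_{\widehat A(X)}\chi_{x_{i-1}}^{s_{i-1}}\circ\dots\circ\chi_{x_0}^{0}(1)$ forces, by inspection of the lexicographic order on $\widehat 2(X\times\widehat A(X))$, that every $\chi$-index of $s_i$ is $\leq_X x_0<_X x_{i-1}$; feeding this into the lemma bounds $\iota(s_i)$ strictly below the fixed point $\varphi_{\widehat{x_{i-1}}}0$, which is precisely what makes the $i$-th summand admissible and keeps the sum weakly decreasing.

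Second, that $\iota$ is order preserving: one shows $m<_{\widehat A(X)}m'\Rightarrow\iota(m)<_{\varphi_{1+X}0}\iota(m')$ by induction on $L(m)+L(m')$, reading off the lexicographic description of $<_{\widehat A(X)}$ and matching it against the recursive comparison clauses for $<_{\varphi_{1+X}0}$, which encode the Veblen inequality~\eqref{eq:ineq-Veblen}. The dominant comparison on the left is between the leading pairs $(x_0,0)$ and $(x_0',0)$, i.e.\ just between $x_0$ and $x_0'$ in $X$; on the right this is the comparison of the leading summands $\varphi_{\hat x_0}0$ and $\varphi_{\widehat{x_0'}}0$, after which one either concludes at once (using the domination lemma to dominate the subordinate summands) or, when $x_0=x_0'$, passes to the tails and recurses. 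Since $\widehat A(X)$ and $\varphi_{1+X}0$ are linear orders (shown earlier, resp.\ in~\cite{rathjen-weiermann-atr}), order preservation upgrades $\iota$ to an order embedding, as claimed.

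The main obstacle I expect is the design of $\iota$ itself: reconciling the \emph{strictly} decreasing $\chi$-indices of an $\widehat A$-composition with the only \emph{weakly} decreasing additively principal summands permitted in $\varphi_{1+X}0$, and in particular the case in which an exponent $s_i$ is "large", i.e.\ has leading $\chi$-index $x_0$ rather than something below $x_i$ — then the naive summand $\varphi_{\hat x_i}(\iota(s_i))$ is \emph{not} admissible, and the clause must instead fold $\iota(s_i)$ into the sum at the right level, using that $\varphi_{\hat x_i}$ fixes $\varphi_{\hat x_0}$-values. Arranging the clauses so that the domination lemma and the admissibility conditions close by a single simultaneous induction is the crux; the rest parallels Propositions~\ref{prop:Ack-Goodstein-dil}--\ref{prop:iso-A-hat-bar}. (Alternatively, since $\widehat A$ and $X\mapsto\varphi_{1+X}0$ are pre-dilators, it would suffice by the extension machinery of Section~\ref{sect:Ackermann-to-Veblen} and~\cite{girard-pi2} to define a support-preserving natural transformation on the subcategory $\nat$ and check there that it is pointwise an embedding; this localizes the combinatorics to Ackermann normal forms but does not remove it.)
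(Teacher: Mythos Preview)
Your plan has a real gap, and it is precisely the one you flag at the end. The inequality chain ``every $\chi$-index of $s_i$ is $\leq_X x_0<_X x_{i-1}$'' is wrong: since $x_{k-1}<_X\dots<_X x_0$, the element $x_0$ is the \emph{largest} index, not smaller than $x_{i-1}$. All the normal-form condition $s_i<_{\widehat A(X)}\chi^{s_{i-1}}_{x_{i-1}}\circ\dots\circ\chi^{s_0}_{x_0}(1)$ tells you is that the leading index of $s_i$ is $\leq_X x_0$; nothing forces it below $x_i$ or $x_{i-1}$. So your domination lemma does not make the summand $\varphi_{\hat x_i}(\iota(s_i))$ admissible in general (the $h$-condition $h(\iota(s_i))\leq_{1+X}\hat x_i$ can fail), and your suggested fix (``fold $\iota(s_i)$ into the sum at the right level'') is not an actual definition. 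A sum-based embedding may be salvageable, but the casework you would need is not in the plan.

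The paper sidesteps all of this by \emph{nesting} rather than summing at the top level: it sets $o(0)=\varphi_00$, $o(1)=\varphi_01$, and recursively
\[
o\bigl(\chi^{s_k}_{x_k}\circ\dots\circ\chi^{s_0}_{x_0}(1)\bigr)=\varphi_{x_k}\Bigl(o\bigl(\chi^{s_{k-1}}_{x_{k-1}}\circ\dots\circ\chi^{s_0}_{x_0}(1)\bigr)+o(s_k)\Bigr).
\]
Now the normal-form inequality $s_k<_{\widehat A(X)}\chi^{s_{k-1}}_{x_{k-1}}\circ\dots\circ\chi^{s_0}_{x_0}(1)$ is \emph{exactly} what is needed for the argument of $\varphi_{x_k}$ to be a well-formed sum (weakly decreasing summands), and a sum has $h$-value $0\leq_{1+X}x_k$, so the $h$-condition is automatic regardless of how large the indices inside $s_k$ are. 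No case split on the leading index of $s_k$ is required. The monotonicity proof then runs by induction on $L(s)+L(t)$, matching the lexicographic cases of $<_{\widehat A(X)}$ against~(\ref{eq:ineq-Veblen}) and the standard fact $r<_{\varphi_{1+X}0}\varphi_x r$ for $\varphi_x r\in\varphi_{1+X}0$.
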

\begin{proof}
The desired embedding $o:\widehat A(X)\to\varphi_{1+X}0$ can be defined by the recursive clauses~$o(0)=\varphi_00=:1$, $o(1)=\varphi_01$ and
\begin{equation*}
o\left(\chi^{s_k}_{x_k}\circ\dots\circ\chi^{s_0}_{x_0}(1)\right)=\varphi_{x_k}\left(o\left(\chi^{s_{k-1}}_{x_{k-1}}\circ\dots\circ\chi^{s_0}_{x_0}(1)\right)+o(s_k)\right).
\end{equation*}
To show that $s<_{\widehat A(X)}t$ implies $o(s)<_{\varphi_{1+X}0} o(t)$, we use induction on $L(s)+L(t)$, where~$L$ is the length function specified after Definition~\ref{def:hat-A-objects}. Simultaneously, we verify that $o(s)$ and $o(t)$ are indeed terms in $\varphi_{1+X}0$. For a term as in the third recursive clause, this claim reduces to
\begin{equation*}
o(s_k)\leq_{\varphi_{1+X}0}o\left(\chi^{s_{k-1}}_{x_{k-1}}\circ\dots\circ\chi^{s_0}_{x_0}(1)\right),
\end{equation*}
which follows from the condition in Definition~\ref{def:hat-A-objects}, using the induction hypothesis. Concerning the inductive proof of monotonicity, we consider an inequality
\begin{equation*}
s=\chi^{s_k}_{x_k}\circ\dots\circ\chi^{s_0}_{x_0}(1)<_{\widehat A(X)}\chi^{t_l}_{y_l}\circ\dots\circ\chi^{t_0}_{y_0}(1)=t.
\end{equation*}
If $(x_i,s_i)=(y_i,t_i)$ holds for all $i\leq k<l$, one can deduce $o(s)<_{\varphi_{1+X}0}o(t)$ from the fact that $\varphi_xr\in\varphi_{1+X}0$ entails $r<_{\varphi_{1+X}0}\varphi_xr$. The latter is, for example, proved in~\cite[Lemma~3.1]{freund_predicative-collapsing}. Let us point out that a fixed point such as $\varphi_10=\varphi_0\varphi_10$ yields no counterexample, since $h(\varphi_10)=1\not\leq 0$ means $\varphi_0\varphi_10\notin\varphi_{1+X}0$. Now assume that $s<_{\widehat A(X)}t$ holds because there is a $j\leq\min\{k,l\}$ with $(x_j,s_j)<_{X\times\widehat A(X)}(y_j,t_j)$ and $\chi^{s_{j-1}}_{x_{j-1}}\circ\dots\circ\chi^{s_0}_{x_0}(1)=\chi^{t_{j-1}}_{y_{j-1}}\circ\dots\circ\chi^{t_0}_{y_0}(1)=:r$. By induction on $i=j,\dots,k$ we show
\begin{equation*}
o\left(\chi^{s_i}_{x_i}\circ\dots\circ\chi^{s_0}_{x_0}(1)\right)<_{\varphi_{1+X}}\varphi_{y_j}(o(r)+o(t_j))=o\left(\chi^{t_j}_{y_j}\circ\dots\circ\chi^{t_0}_{y_0}(1)\right)\leq_{\varphi_{1+X}}o(t).
\end{equation*}
Note that the case $i=k$ amounts to the desired inequality~$o(s)<_{\varphi_{1+X}0}o(t)$. Concerning the base~$i=j$, we first assume that $(x_j,s_j)<_{X\times\widehat A(X)}(y_j,t_j)$ holds because of~$x_j<_Xy_j$. Due to the induction hypothesis, the condition $s_j<_{\widehat A(X)}r$ from Definition~\ref{def:hat-A-objects} entails
\begin{equation*}
o(s_j)<_{\varphi_{1+X}0}o(r)\leq_{\varphi_{1+X}0}o(r)+o(t_j)<_{\varphi_{1+X}0}\varphi_{y_j}(o(r)+o(t_j)).
\end{equation*}
Since the right side is additively principal (cf.~\cite[Section~2]{rathjen-weiermann-atr} and the discussion above), it also bounds $o(r)+o(s_j)$. By equivalence~(\ref{eq:ineq-Veblen}) we get
\begin{equation*}
o\left(\chi^{s_j}_{x_j}\circ\dots\circ\chi^{s_0}_{x_0}(1)\right)=\varphi_{x_j}(o(r)+o(s_j))<_{\varphi_{1+X}0}\varphi_{y_j}(o(r)+o(t_j)),
\end{equation*}
as required. If we have $x_j=y_j$ and $s_j<_{\widehat A(X)}t_j$, then the induction hypothesis yields $o(r)+o(s_j)<_{\varphi_{1+X}0}o(r)+o(t_j)$, and it is straightforward to conclude by~(\ref{eq:ineq-Veblen}). The step from~$i$ to~$i+1$ is similar to the first part of the case~$i=j$, since clause~(ii) of Definition~\ref{def:hat-A-objects} entails that we have $x_{i+1}<_X x_j\leq y_j$. 
\end{proof}

In the rest of this paper we prove the other inequality from Theorem~\ref{thm:Ackermann-Veblen}, which demands an embedding of~$\varphi_{1+X}(0)$ into $\widehat A((2+X)\times\mathbb N)$. Let us recall that $2+X$ is the extension of $X$ by two bottom elements, which we denote by~$-1$ and~$0$ (with $-1<_{2+X}0<_{2+X}x$ for any $x\in X$). Whenever $2+X$ and $1+X$ appear in the same context, we identify the latter with $(2+X)\backslash\{-1\}$ (so that the bottom element of $1+X$ is denoted by~$0$, as before). The following definition yields a natural refinement of the function $h:\varphi_{1+X}0\to 1+X$ that was defined simultaneously with the set~$\varphi_{1+X}0$. For $s\in\varphi_{1+X}0$, we also define the ``most significant" subterm~$T(s)$ and the remainder~$R(s)$. Note that the tuple $(H(s),T(s),R(s))$ determines~$s$ uniquely.

\begin{definition}[$\rca_0$]\label{def:coeff-subterms}
Let $H:\varphi_{1+X}0\to 2+X$ be given by $H(0)=-1$ and
\begin{align*}
H(\varphi_xs)&=x&&\text{(which lies in $1+X=(2+X)\backslash\{-1\}$)},\\
H(\varphi_{x_0}s_0+\dots+\varphi_{x_{n-1}}s_{n-1})&=-1&&(\text{for $n>1$}).
\end{align*}
We also define functions $T,R:\varphi_{1+X}0\to\varphi_{1+X}0$, by setting $T(0)=0=R(0)$, $T(\varphi_xs)=s$, $R(\varphi_xs)=0$ and
\begin{align*}
T(\varphi_{x_0}s_0+\dots+\varphi_{x_{n-1}}s_{n-1})&=\max(\{\varphi_{x_0}s_0\}\cup\{\varphi_{x_1}s_1+\dots+\varphi_{x_{n-1}}s_{n-1}\}),\\
R(\varphi_{x_0}s_0+\dots+\varphi_{x_{n-1}}s_{n-1})&=\min(\{\varphi_{x_0}s_0\}\cup\{\varphi_{x_1}s_1+\dots+\varphi_{x_{n-1}}s_{n-1}\}),
\end{align*}
where maximum and minimum are taken in the order~$\varphi_{1+X}0$.
\end{definition}

Let us recall that the lower indices of a term $\chi^{s_k}_{x_k}\circ\dots\circ\chi^{s_0}_{x_0}(1)\in\widehat A(X)$ must be strictly increasing. Given~$s\in\varphi_{1+X}0$, this suggests to look for the first subterm~$t$ with $H(s)<_{2+X} H(t)$. To make this precise, we consider the iterates $T^n(s)$ that are recursively explained by $T^0(s)=s$ and $T^{n+1}(s)=T(T^n(s))$. Note that we must reach $T^n(s)=0$ for sufficiently large~$n$, since $T(t)$ is a proper subterm of~$t$ when the latter is different from~$0$. If the reader thinks that our notion of subterm is too imprecise, they may alternative use the length function from the proof of Proposition~\ref{prop:o-into-A} below. In any case, the fact that our iterates reach~$T^n(s)=0$ justifies the following construction:

\begin{definition}[$\rca_0$]
The function $T_*:\varphi_{1+X}0\to\varphi_{1+X}0$ is given by
\begin{equation*}
T_*(s)=T^{n(s)}(s)\quad\text{with}\quad n(s)=\min\{n\in\mathbb N\,|\,H(s)<_{2+X}H(T^n(s))\text{ or }T^n(s)=0\}.
\end{equation*}
To define $H_*:\varphi_{1+X}0\to\mathbb N$, we declare that
\begin{equation*}
H_*(s)=|\{n<n(s)\,|\,H(T^n(s))=H(s)\}|
\end{equation*}
is the number of iterates $T^n(s)$ with $n<n(s)$ for which $H(T^n(s))$ is as big as possible. Let us also set
\begin{equation*}
N(s)=\min\{N\in\mathbb N\,|\,T_*^N(s)=0\},
\end{equation*}
where $T_*^N=(T_*)^N$ denotes the $N$-th iterate of~$T_*$. Finally, we abbreviate
\begin{equation*}
s[i]=T_*^{N(s)-i}(s)
\end{equation*}
for any $i\leq N(s)$.
\end{definition}

To justify the definition of~$N(s)$, we point out that $t\neq 0$ entails $n(t)>0$, so that~$T_*(t)$ is a proper subterm of~$t$. The following result will play a fundamental role. It provides first evidence that our construction relates to the lexicographic order from the definition of~$\widehat A$.

\begin{lemma}[$\rca_0$]\label{lem:star-extend}
For each~$s\in\varphi_{1+X}0$ we have $N(s)\leq N(T(s))+1$ and
\begin{equation*}
s[i]=T(s)[i]\quad\text{for all $i<N(s)$}.
\end{equation*}
\end{lemma}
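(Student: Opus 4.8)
The plan is to reduce the statement to a single structural fact about the operator $T_*$: replacing $s$ by $T(s)$ changes the $T_*$-orbit of $s$ only by absorbing an initial stretch of it, after which the two orbits coincide. I would carry out all computations along the finite sequence $s=T^0(s),T^1(s),\dots,T^\ell(s)=0$ of $T$-iterates (finite, as the excerpt notes, because $T$ strictly decreases subterm complexity), using only the heights $H(T^p(s))\in 2+X$ and the fact that, from any term $w$, the value $T_*(w)=T^{n(w)}(w)$ is obtained by following the $T$-sequence of $w$ to the first term that equals $0$ or whose height strictly exceeds $H(w)$.

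First I would dispose of the case $T_*(s)=0$, equivalently $N(s)\le 1$ (this also covers $s=0$): here $N(s)\le 1\le N(T(s))+1$ is immediate, and the only index $i<N(s)$ is $i=0$, for which $s[0]=T_*^{N(s)}(s)=0=T_*^{N(T(s))}(T(s))=T(s)[0]$ by the definition of $N$. So I may assume $T_*(s)\neq 0$ and set $p:=n(s)$. Then $p\ge 1$ (as $s\neq 0$), and since $T^p(s)=T_*(s)\neq 0$ the minimality built into $n(s)$ gives $H(T^p(s))>_{2+X}H(s)$ and, for every $n$ with $1\le n<p$, both $T^n(s)\neq 0$ and $H(T^n(s))\le_{2+X}H(s)$; in particular $p<\ell$.

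The crux is the claim that $T_*^{k^*}(T(s))=T_*(s)$ for some $k^*\ge 0$. To prove it I would track the $T_*$-iterates of $T(s)$ along the $T$-sequence of $s$, writing $T_*^m(T(s))=T^{r_m}(s)$ with $r_0=1$, and argue by induction: whenever $r_m<p$, the term $T^{r_m}(s)$ is nonzero with $H(T^{r_m}(s))\le_{2+X}H(s)<_{2+X}H(T^p(s))$, so substituting $n=p-r_m$ into the definition of $n(T^{r_m}(s))$ shows $n(T^{r_m}(s))\le p-r_m$, whence $r_m<r_{m+1}=r_m+n(T^{r_m}(s))\le p$. Thus $(r_m)$ is strictly increasing and bounded by $p$, so $r_{k^*}=p$ for some $k^*$, giving $T_*^{k^*}(T(s))=T^p(s)=T_*(s)$; moreover $T_*^m(T(s))=T^{r_m}(s)\neq 0$ for all $m\le k^*$ since $r_m\le p<\ell$.

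Granting the claim, the rest is bookkeeping: $T_*^{k^*+j}(T(s))=T_*^{j}(T_*(s))=T_*^{j+1}(s)$ for all $j\ge 0$, so taking $j=N(s)-1$ yields $T_*^{k^*+N(s)-1}(T(s))=0$; combined with the nonvanishing of $T_*^m(T(s))$ for $m\le k^*$ and (via the same identity) for $k^*<m<k^*+N(s)-1$, this gives $N(T(s))=k^*+N(s)-1$, hence $N(s)=N(T(s))-k^*+1\le N(T(s))+1$. Finally, for $i<N(s)$ one has $N(s)-i\ge 1$, so
\begin{equation*}
s[i]=T_*^{N(s)-i}(s)=T_*^{N(s)-i-1}\bigl(T_*(s)\bigr)=T_*^{N(s)-i-1}\bigl(T_*^{k^*}(T(s))\bigr)=T_*^{k^*+N(s)-i-1}(T(s))=T_*^{N(T(s))-i}(T(s))=T(s)[i].
\end{equation*}
The hard part will be the induction in the crux step — verifying that $H(T^p(s))$ is high enough that every $T_*$-jump taken from $T(s)$ lands no later than position $p$ — and checking that the degenerate configurations ($p=1$, where $T_*(s)=T(s)$ and $k^*=0$; $T(s)=0$, which lies in the first case) are subsumed automatically rather than needing separate treatment. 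Since everything here is recursion along the finite $T$-sequence together with a bounded search, it formalizes in $\rca_0$.
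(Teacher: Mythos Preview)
Your proof is correct and follows essentially the same route as the paper's: both handle $T_*(s)=0$ trivially, then establish the key fact $T_*^{k^*}(T(s))=T_*(s)$ using the observation that $H(T^j(s))\le_{2+X}H(s)<_{2+X}H(T^{n(s)}(s))$ for $1\le j<n(s)$, and finish with the same bookkeeping to get $N(T(s))=k^*+N(s)-1$ and the formula for $s[i]$. The only cosmetic difference is that you track the $T_*$-orbit of $T(s)$ forward via the positions $r_m$, whereas the paper runs a backward induction on $k<n(s)$ showing that each $T^{n(s)-k}(s)$ reaches $T_*(s)$ under $T_*$-iteration; these are the same argument indexed in opposite directions.
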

\begin{proof}
In case $T_*(s)=0$ we have $N(s)\leq 1$ and the claim holds since $t[0]=T_*^{N(t)}(t)$ is always equal to~$0$. For the rest of the proof we assume $T_*(s)\neq 0$, which entails $s\neq 0$ and hence $n(s)>0$. By induction on $k<n(s)$ we show that there is an $n\leq k$ with $T_*^n(T^{n(s)-k}(s))=T_*(s)$. For $k=0$ we recall $T_*^0(T^{n(s)}(s))=T^{n(s)}(s)=T_*(s)$. For $k>0$, the minimality of~$n(s)$ yields $T^{n(s)-k}(s)\neq 0$ and
\begin{equation*}
H(T^{n(s)-k}(s))\leq_{2+X}H(s)<_{2+X}H(T^{n(s)}(s))=H(T^k(T^{n(s)-k}(s))),
\end{equation*}
where the second inequality relies on~$T_*(s)\neq 0$. For $l:=n(T^{n(s)-k}(s))$, it follows that we have $0<l\leq k$. By induction hypothesis we get a number $m\leq k-l$ with $T_*^m(T^{n(s)-k+l}(s))=T_*(s)$. For $n:=m+1\leq k$ we obtain
\begin{multline*}
T_*^n(T^{n(s)-k}(s))=T_*^m(T_*(T^{n(s)-k}(s)))=T_*^m(T^l(T^{n(s)-k}(s)))=\\
=T_*^m(T^{n(s)-k+l}(s))=T_*(s),
\end{multline*}
as needed to complete the induction step. For $k=n(s)-1$, the result of the induction yields $T_*^n(T(s))=T_*(s)$ for some~$n$. In view of the assumption $T_*(s)\neq 0$ we can conclude $N(T(s))=n-1+N(s)$. For $i<N(s)$ we also get
\begin{multline*}
T(s)[i]=T_*^{N(T(s))-i}(T(s))=T_*^{N(s)-i-1+n}(T(s))=\\
=T_*^{N(s)-i-1}(T_*^n(T(s)))=T_*^{N(s)-i-1}(T_*(s))=T_*^{N(s)-i}(s)=s[i],
\end{multline*}
as desired.
\end{proof}

Let us also observe that we have found a monotone sequence of indices:

\begin{lemma}[$\rca_0$]\label{lem:indices-monotone}
For any $s\in\varphi_{1+X}0$ we have
\begin{equation*}
H(s[1])>_{2+X}\dots>_{2+X}H(s[N(s)])=H(s).
\end{equation*}
\end{lemma}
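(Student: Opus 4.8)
The plan is to isolate a single monotonicity fact about the operator $T_*$ and then read off the statement by unwinding the abbreviation $s[i]=T_*^{N(s)-i}(s)$. The fact I would prove first is: \emph{whenever $t\in\varphi_{1+X}0$ satisfies $T_*(t)\neq 0$, we have $H(t)<_{2+X}H(T_*(t))$}. This is immediate from the definition of $n(t)$: the number $n(t)$ is the least element of $\{n\in\mathbb N\mid H(t)<_{2+X}H(T^n(t))\text{ or }T^n(t)=0\}$, so in particular $n(t)$ itself belongs to this set; since $T_*(t)=T^{n(t)}(t)\neq 0$ rules out the second disjunct, the first disjunct must hold, i.e.\ $H(t)<_{2+X}H(T^{n(t)}(t))=H(T_*(t))$.

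Granting this, I would argue as follows. If $N(s)=0$ (equivalently $s=0$) the claim is trivial, so assume $N(s)\geq 1$. For each $i$ with $1\leq i\leq N(s)-1$ one has $s[i]=T_*^{N(s)-i}(s)=T_*\bigl(T_*^{N(s)-i-1}(s)\bigr)=T_*(s[i+1])$; moreover $s[i]=T_*^{N(s)-i}(s)$ with $0\leq N(s)-i\leq N(s)-1<N(s)$, so $s[i]\neq 0$ by the minimality in the definition of $N(s)$. Thus $T_*(s[i+1])=s[i]\neq 0$, and the monotonicity fact gives $H(s[i+1])<_{2+X}H(T_*(s[i+1]))=H(s[i])$. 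Chaining these inequalities for $i=1,\dots,N(s)-1$ produces the strictly descending sequence $H(s[1])>_{2+X}\dots>_{2+X}H(s[N(s)])$, and the concluding equality $H(s[N(s)])=H(s)$ holds because $s[N(s)]=T_*^0(s)=s$.

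I do not expect a real obstacle here: the whole argument is bookkeeping with the iteration index $N(s)-i$ together with the two degenerate cases $N(s)\in\{0,1\}$ (where the asserted chain is empty or a single trivial equality). In particular, Lemma~\ref{lem:star-extend} is not needed for this lemma; it records a different compatibility property of the sequences $s[\cdot]$ that will be used elsewhere. The only genuinely mathematical input is the one-line deduction of the monotonicity fact from the shape of the set defining $n(t)$.
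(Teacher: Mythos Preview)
Your proof is correct and follows essentially the same approach as the paper's own proof. The paper's argument is slightly more compressed---it simply observes $s[N(s)]=T_*^0(s)=s$ for the equality and then, for $0<i<N(s)$, notes that $s[i]\neq 0$ so ``the definition of $T_*(s[i+1])=s[i]$ yields the inequality $H(s[i+1])<_{2+X}H(T_*(s[i+1]))=H(s[i])$''---which is exactly your monotonicity fact applied in place, without first isolating it as a separate claim.
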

\begin{proof}
The equality in the lemma holds since we have $s[N(s)]=T_*^0(s)=s$. For $0<i<N(s)$ we have $s[i]\neq 0$, so that the definition of $T_*(s[i+1])=s[i]$ yields the inequality $H(s[i+1])<_{2+X}H(T_*(s[i+1]))=H(s[i])$.
\end{proof}

We now split each index $H(s)$ into three:

\begin{definition}[$\rca_0$]\label{def:split-indices}
For $i\leq 2$ we define $H_i:\varphi_{1+X}0\to(2+X)\times\mathbb N$ by
\begin{equation*}
H_0(s)=(H(s),0),\quad H_1(s)=(H(s),1),\quad H_2(s)=(H(s),1+H_*(s)).
\end{equation*}
\end{definition}

Let us point out that $H_1(s)<_{(2+X)\times\mathbb N}H_2(s)$ holds for~$s\neq 0$. Indeed, the latter entails that we have $n(s)>0$, so that $H(T^0(s))=H(s)$ witnesses $H_*(s)>0$. We have included $H_*(s)$ in order to obtain the following result, which complements Lemma~\ref{lem:star-extend} (note that the corresponding statement for $i=0,1$ can fail).

\begin{lemma}[$\rca_0$]\label{lem:subterms-descend}
For any $s\in\varphi_{1+X}0$ with $N(T(s))\geq N(s)>0$ we have
\begin{equation*}
H_2(T(s)[N(s)])<_{(2+X)\times\mathbb N}H_2(s[N(s)])=H_2(s).
\end{equation*}
\end{lemma}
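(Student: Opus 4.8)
The plan is to trace how the starred iterates of $T(s)$ relate to those of $s$ and to realise $T(s)[N(s)]$ as a single ordinary iterate $T^{j}(s)$. The equality is immediate: since $N(s)>0$ we have $s[N(s)]=T_*^{0}(s)=s$, so $H_2(s[N(s)])=H_2(s)$, and it remains to prove $H_2(T(s)[N(s)])<_{(2+X)\times\mathbb N}H_2(s)$. Write $u:=T(s)[N(s)]$. I claim $u=T^{j}(s)$ for some $j$ with $1\leq j<n(s)$, and moreover $n(s)=j+n(u)$; granting this, the desired inequality drops out of a short computation with the definitions of $H_2$ and $H_*$.

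First I would verify that $u\neq 0$ and that $u$ is a genuine positive $T$-iterate of $s$. From the hypothesis $N(T(s))\geq N(s)\geq 1$ we get $0\leq N(T(s))-N(s)<N(T(s))$, so $u=T_*^{N(T(s))-N(s)}(T(s))$ is a non-final iterate of $T_*$ starting at $T(s)$; by the minimality clause defining $N(T(s))$, none of the iterates $T_*^{i}(T(s))$ with $i\leq N(T(s))-N(s)$ — in particular $T(s)$ itself and $u$ — is $0$. Since each application of $T_*$ to a nonzero term is a composition of $n(\cdot)\geq 1$ copies of $T$, and $T(s)=T^{1}(s)$, this gives $u=T^{j}(s)$ with $j\geq 1$. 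Next, unwinding $T(s)[i]=T_*^{N(T(s))-i}(T(s))$ shows $T_*(u)=T_*\bigl(T(s)[N(s)]\bigr)=T(s)[N(s)-1]$, which by Lemma~\ref{lem:star-extend} (applicable since $N(s)-1<N(s)$) equals $s[N(s)-1]=T_*^{1}(s)=T_*(s)$; hence $T_*(u)=T_*(s)$.

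Now I would pin down $j$. From $u=T^{j}(s)$ and $T_*(v)=T^{n(v)}(v)$ we obtain $T^{\,j+n(u)}(s)=T_*(u)=T_*(s)=T^{n(s)}(s)$. The iterates $T^{0}(s),T^{1}(s),\dots$ strictly decrease (as $T(v)$ is a proper subterm of $v$ for $v\neq 0$) until they reach $0$; combining this with $u=T^{j}(s)\neq 0$ and $n(u)\geq 1$ one deduces $j+n(u)=n(s)$, and hence $1\leq j<n(s)$ — the one point needing care is the case $T_*(s)=0$, where one first observes that $n(s)$ is then exactly the least index at which the $T$-iterates of $s$ vanish, so that $u\neq 0$ again forces $j<n(s)$. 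By the minimality defining $n(s)$, every $i<n(s)$ satisfies $H(T^{i}(s))\leq_{2+X}H(s)$; taking $i=j$ yields $H(u)\leq_{2+X}H(s)$. If this inequality is strict, then $H_2(u)<_{(2+X)\times\mathbb N}H_2(s)$ regardless of the second coordinates, since the product order on $(2+X)\times\mathbb N$ is lexicographic with the first coordinate dominant. If $H(u)=H(s)$, then, using $T^{n}(u)=T^{n+j}(s)$ and $j+n(u)=n(s)$,
\[
H_*(u)=\bigl|\{\,n<n(u):H(T^{n+j}(s))=H(s)\,\}\bigr|=\bigl|\{\,m:j\leq m<n(s),\ H(T^{m}(s))=H(s)\,\}\bigr|,
\]
so that $H_*(s)-H_*(u)=\bigl|\{\,m:0\leq m<j,\ H(T^{m}(s))=H(s)\,\}\bigr|\geq 1$, the lower bound coming from $m=0$ (note $j\geq 1$ and $H(T^{0}(s))=H(s)$). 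Hence $H_2(u)=(H(s),1+H_*(u))<_{(2+X)\times\mathbb N}(H(s),1+H_*(s))=H_2(s)$.

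I expect the main obstacle to be the identification $u=T^{j}(s)$ with $1\leq j<n(s)$ and $n(s)=j+n(u)$: this is the step where Lemma~\ref{lem:star-extend} and the interlocking recursions defining $T_*$, $n(\cdot)$ and $N(\cdot)$ all have to be used at once, and where the degenerate case $T_*(s)=0$ must be treated separately. Once $j$ has been located, comparing the pairs $H_2(u)$ and $H_2(s)$ — both the first-coordinate case and the counting argument for $H_*$ — is routine.
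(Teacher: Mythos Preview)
Your proposal is correct and follows essentially the same route as the paper: identify $u=T(s)[N(s)]$ as some $T^{j}(s)$ with $1\le j<n(s)$, establish $T_*(u)=T_*(s)$ and $j+n(u)=n(s)$, then compare first the $H$-components and, in case of equality, the counts $H_*$. The only cosmetic difference is that you invoke the statement of Lemma~\ref{lem:star-extend} to obtain $T(s)[N(s)-1]=s[N(s)-1]$, whereas the paper recalls the identity $T_*^{\,N(T(s))-N(s)+1}(T(s))=T_*(s)$ from that lemma's proof; both lead to the same conclusion.
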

\begin{proof}
As in the proof of Lemma~\ref{lem:star-extend}, we have
\begin{equation*}
T_*^n(T(s))=T_*(s)\quad\text{for}\quad n=N(T(s))-N(s)+1.
\end{equation*}
Note that this does even hold for $T_*(s)=0$ (which was excluded in the cited proof), since the latter entails $N(s)\leq 1$. In view of $N(T(s))\geq N(s)$, we get
\begin{equation*}
T(s)[N(s)]=T_*^{N(T(s))-N(s)}(T(s))=T_*^{n-1}(T(s))=:t.
\end{equation*}
Since $T_*$ is constructed by iterating~$T$, we can write $t=T^k(s)$ with~$k>0$. We get
\begin{equation*}
T^{k+n(t)}(s)=T^{n(t)}(t)=T_*(t)=T_*^n(T(s))=T_*(s).
\end{equation*}
The assumption $N(s)>0$ ensures $n-1<N(T(s))$ and hence $t\neq 0$. The latter implies~$n(t)>0$. For $i<k+n(t)$ we can also conclude that $T^{i+1}(s)$ is a proper subterm of $T^i(s)\neq 0$. It follows that $n=k+n(t)$ is minimal with $T^n(s)=T_*(s)$, which entails $k+n(t)=n(s)$. Now the minimality of $n(s)$ yields the inequality
\begin{equation*}
H(T(s)[N(s)])=H(T^k(s))\leq_{2\times X}H(s)
\end{equation*}
between the first components of the values of~$H_2$. If this inequality is strict, then we are done. Now assume that $H(s)$ is equal to $H(T(s)[N(s)])=H(t)=H(T^k(s))$. Then any $i<n(t)$ with $H(T^i(t))=H(t)$ corresponds to a number $k+i<n(s)$ with $H(T^{k+i}(s))=H(T^i(t))=H(s)$. The number $0<k$ provides an additional exponent with $H(T^0(s))=H(s)$. We thus get $H_*(s)>H_*(t)=H_*(T(s)[N(s)])$, which is the required inequality in the second components of~$H_2$.
\end{proof}

The following monotonicity property will also be required:

\begin{lemma}[$\rca_0$]\label{lem:H-star-monotone}
Consider terms $s,t\in\varphi_{1+X}0$. If we have $T_*(s)=T_*(t)$, $H(s)=H(t)$ and $s\leq_{\varphi_{1+X}0}t$, then we have $H_*(s)\leq H_*(t)$.
\end{lemma}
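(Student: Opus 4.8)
The plan is to argue by induction on the total number $\ell(s)+\ell(t)$ of symbols occurring in $s$ and $t$, using that $\ell(T(r))<\ell(r)$ for $r\neq 0$ because $T(r)$ is then a proper subterm of $r$. If $s=0$ there is nothing to prove, and $s\leq_{\varphi_{1+X}0}t$ forces $t\neq 0$ once $s\neq 0$, so assume $s,t\neq 0$; since $H(s)=H(t)$, either both of $s,t$ are proper sums (with $H=-1$) or both have the form $\varphi_x(\cdot)$ with the same $x\in 1+X$. Treat the sum case first: writing $a:=\varphi_{x_0}s_0$ for the leading summand of $s$, occurring with multiplicity $k_s\geq 1$, an unravelling of the definitions of $T$, $n$ and $H_*$ gives $T_*(s)=a$ together with $H_*(s)=k_s$ if $s$ has a summand $<_{\varphi_{1+X}0}a$, and $H_*(s)=k_s-1$ if $s=a+\dots+a$. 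Then $T_*(s)=T_*(t)$ says that $s$ and $t$ have the same leading summand $a$, so $s=a^{k_s}+r_s$ and $t=a^{k_t}+r_t$ with $r_s,r_t<_{\varphi_{1+X}0}a$, and $s\leq_{\varphi_{1+X}0}t$ forces $k_s\leq k_t$ and, when $k_s=k_t$, also $r_s\leq_{\varphi_{1+X}0}r_t$; checking the resulting possibilities yields $H_*(s)\leq H_*(t)$.

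For the principal case write $s=\varphi_xs_0$ and $t=\varphi_xt_0$, so that $s\leq_{\varphi_{1+X}0}t$ is equivalent to $s_0\leq_{\varphi_{1+X}0}t_0$ by~(\ref{eq:ineq-Veblen}). Unfolding the definition of $H_*$, one has $H_*(s)=1$ unless some iterate $T^j(s_0)$ with $j<n(s)-1$ satisfies $H(T^j(s_0))=x$; if so, let $w_s:=T^{j^*}(s_0)$ for the least such $j^*$. A short computation then gives $T_*(w_s)=T_*(s)$, $H(w_s)=x$, $H_*(s)=1+H_*(w_s)$, and $w_s$ is a proper subterm of $s$; the same applies to $t$ and produces $w_t$ (when it exists) with $H_*(t)=1+H_*(w_t)$. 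If $H_*(s)=1$ we are done, since $t\neq 0$ gives $H_*(t)\geq 1$. Otherwise $w_s$ exists, and it suffices to show that $w_t$ exists and $w_s\leq_{\varphi_{1+X}0}w_t$: since $H(w_s)=x=H(w_t)$, $T_*(w_s)=T_*(s)=T_*(t)=T_*(w_t)$, and $\ell(w_s)+\ell(w_t)<\ell(s)+\ell(t)$, the induction hypothesis gives $H_*(w_s)\leq H_*(w_t)$, whence $H_*(s)=1+H_*(w_s)\leq 1+H_*(w_t)=H_*(t)$.

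The heart of the proof, and the step I expect to be the main obstacle, is establishing the existence of $w_t$ and the inequality $w_s\leq_{\varphi_{1+X}0}w_t$; this is a direct (non-inductive) argument in which the hypothesis $T_*(s)=T_*(t)=:u$ enters decisively. From the definition of $n$ one sees that the chains $s_0,T(s_0),\dots$ and $t_0,T(t_0),\dots$ stay at level $\leq_{2+X}x$ until they reach $u$, where $H$ jumps above $x$ (or $u=0$); moreover these chains are weakly decreasing for $<_{\varphi_{1+X}0}$, since $T(r)\leq_{\varphi_{1+X}0}r$ always (for $r=\varphi_yr'$ this is $r'<_{\varphi_{1+X}0}\varphi_yr'$, as in the proof of Proposition~\ref{prop:A-into-phi}; for a sum it is $\max(a_0,a_1+\dots)\leq a_0+a_1+\dots$). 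Now $w_s\leq_{\varphi_{1+X}0}s_0\leq_{\varphi_{1+X}0}t_0$, while $u<_{\varphi_{1+X}0}w_s$ because $w_s$ lies above $u$ in its own chain but has a different value of $H$. If $t_0=w_s$ we are done; otherwise $t_0>_{\varphi_{1+X}0}w_s$, so the decreasing chain of $t_0$ passes, at some step, from a value $p:=T^{j-1}(t_0)>_{\varphi_{1+X}0}w_s$ to $T^j(t_0)\leq_{\varphi_{1+X}0}w_s$, and $p$ still lies above $u$, hence $H(p)\leq_{2+X}x$. A case distinction on $p$ then finishes the argument: if $H(p)=x$, then $p$ is a level-$x$ term occurring in $t_0$'s chain before $u$, so $w_t$ exists and $w_t\geq_{\varphi_{1+X}0}p>_{\varphi_{1+X}0}w_s$; if $p=\varphi_y(r)$ with $y<_{1+X}x$, then $w_s<_{\varphi_{1+X}0}p$ forces $w_s<_{\varphi_{1+X}0}r=T(p)$ by~(\ref{eq:ineq-Veblen}), contradicting $T^j(t_0)\leq_{\varphi_{1+X}0}w_s$; and if $p$ is a sum, then — using that the $\varphi_x$-term $w_s$ is additively principal, so that any finite sum of terms $<_{\varphi_{1+X}0}w_s$ remains $<_{\varphi_{1+X}0}w_s$ — the inequalities $T(p)\leq_{\varphi_{1+X}0}w_s<_{\varphi_{1+X}0}p$ force $T(p)=w_s$, so that again $w_s=T^j(t_0)$ occurs in $t_0$'s chain before $u$ and $w_t\geq_{\varphi_{1+X}0}w_s$. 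Lemmas~\ref{lem:star-extend}, \ref{lem:indices-monotone} and~\ref{lem:subterms-descend} handle the routine bookkeeping of the levels $H(T^j(\cdot))$ along these chains.
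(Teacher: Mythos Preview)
Your proof is correct and takes a genuinely different route from the paper's. The sum case ($H(s)=H(t)=-1$) is handled the same way in both, but for the principal case $s=\varphi_xs_0$, $t=\varphi_xt_0$ the two arguments diverge.

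The paper proves the contrapositive directly: assuming $T_*(s)=T_*(t)$ and $H_*(t)<H_*(s)$, it introduces the refined counts $H_*(r,i)=|\{n : i\le n<n(r),\ H(T^n(r))=x\}|$ and shows by downward induction on $j$ (from $n(t)$ to $0$) that $H_*(t,j)<H_*(s,i)$ implies $T^j(t)<_{\varphi_{1+X}0}T^i(s)$; the base case uses $T_*(s)=T_*(t)$ and the step is a short case split on the shape of $T^j(t)$ via~(\ref{eq:ineq-Veblen}). No outer induction on term length is needed.

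Your approach instead isolates the recursive identity $H_*(s)=1+H_*(w_s)$, where $w_s$ is the first level-$x$ term in the $T$-chain below $s_0$, and reduces the claim to $w_s\le_{\varphi_{1+X}0}w_t$, which you then establish by locating the last point $p$ in $t_0$'s $T$-chain strictly above $w_s$ and analysing its shape. This is arguably more conceptual: it exposes why $H_*$ behaves like a ``depth of nesting at level $x$'' and makes the role of~(\ref{eq:ineq-Veblen}) and additive principality very transparent in the three-case split on $p$. The paper's argument, by contrast, is more uniform and avoids the outer induction on $\ell(s)+\ell(t)$, at the cost of carrying the auxiliary parameter $H_*(r,i)$ throughout.

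One small remark: your closing appeal to Lemmas~\ref{lem:star-extend}, \ref{lem:indices-monotone} and~\ref{lem:subterms-descend} is not actually needed. The bookkeeping you describe (that the $T$-chain of $t_0$ stays at level $\le_{2+X}x$ until it reaches $u$, and that it is strictly decreasing) follows immediately from the definition of $n(t)$ and the fact that $T(r)<_{\varphi_{1+X}0}r$ for $r\neq 0$; none of those three lemmas is invoked. You could safely drop that sentence.
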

\begin{proof}
We distinguish cases according to the value of $H(s)=H(t)$. First assume that the latter is equal to the minimal element $-1\in 2+X$. In case~$s=0$ we have $H_*(s)=0$, and the conclusion of the lemma is trivial. Now consider an inequality
\begin{equation*}
s=\varphi_{x_0}s_0+\dots+\varphi_{x_{m-1}}s_{m-1}<_{\varphi_{1+X}0}\varphi_{y_0}t_0+\dots+\varphi_{y_{n-1}}t_{n-1}=t.
\end{equation*}
Let $i<m$ and $k<n$ be maximal with $\varphi_{x_i}s_i=\varphi_{x_0}s_0$ and $\varphi_{y_k}t_k=\varphi_{y_0}t_0$, respectively. By induction on $j\leq i$ we obtain $T^j(s)=\varphi_{x_j}s_j+\dots+\varphi_{x_{m-1}}s_{m-1}$. If we have $i<m-1$, then $T^i(s)$ is a sum and we have $T^{i+1}(s)=\varphi_{x_i}s_i$, so that
\begin{equation*}
-1=H(T^0(s))=\dots=H(T^i(s))<_{2+X}H(T^{i+1}(s))=x_i\in 1+X.
\end{equation*}
In case $i=m-1$ we have $T^i(s)=\varphi_{x_i}s_i$ while all previous iterates are sums. These considerations (and analogous ones for~$t$) show that we have $T_*(s)=\varphi_{x_i}s_i=\varphi_{x_0}s_0$ and $T_*(t)=\varphi_{y_0}t_0$, as well as
\begin{equation*}
H_*(s)=\begin{cases}
i+1 & \text{if $i<m-1$},\\
i & \text{if $i=m-1$},
\end{cases}
\qquad\text{and}\qquad
H_*(t)=\begin{cases}
k+1 & \text{if $k<n-1$},\\
k & \text{if $k=n-1$}.
\end{cases}
\end{equation*}
Hence the assumption $T_*(s)=T_*(t)$ amounts to $\varphi_{x_0}s_0=\varphi_{y_0}t_0$. Recall that summands are always in weakly decreasing order, and that sums are compared lexicographically. From $s<_{\varphi_{1+X}0}t$ we can thus infer $i\leq k$ and $i<n-1$. This readily implies $H_*(s)\leq H_*(t)$, as desired. Let us now consider the case where $H(s)=H(t)=:x$ is different from~$-1$. Assuming $T_*(s)=T_*(t)$ and $H_*(t)<H_*(s)$, we will show~$t<_{\varphi_{1+X}0}s$. For $i\leq n(s)$, define
\begin{equation*}
H_*(s,i)=\left|\{n\,|\,i\leq n<n(s)\text{ and }H(T^n(s))=x=H(s)\}\right|.
\end{equation*}
Let $H_*(t,j)$ for $j\leq n(t)$ be defined in the same way. By induction from $j=n(t)$ down to $j=0$ we will show the following: For all $i\leq n(s)$ with $H_*(t,j)<H_*(s,i)$ we have $T^j(t)<_{\varphi_{1+X}0}T^i(s)$. In view of $H_*(t,0)=H_*(t)<H_*(s)=H_*(s,0)$, the case $j=0=i$ will yield $t=T^0(t)<_{\varphi_{1+X}0}T^0(s)=s$, as desired. As preparation for  the inductive argument, we note that $T(r)<_{\varphi_{1+X}0}r$ holds for any~$r\neq 0$ (the crucial case $r=\varphi_yr'$ is treated in~\cite[Lemma~3.1]{freund_predicative-collapsing}). Iteratively, we get $T^n(r)<_{\varphi_{1+X}0}T^m(r)$ for $m<n\leq n(r)$. Concerning the base $j=n(t)$ of our induction, we observe that the assumption $H_*(t,n(t))<H_*(s,i)$ entails $i<n(s)$, as $H_*(s,n(s))=0$. We get
\begin{equation*}
T^{n(t)}(t)=T_*(t)=T_*(s)=T^{n(s)}(s)<_{\varphi_{1+X}0}T^i(s).
\end{equation*}
In the induction step, we consider $j<n(t)$ and $i\leq n(s)$ with $H_*(t,j)<H_*(s,i)$. Let $k\geq i$ be minimal with $H(T^k(s))=x$. We then have $H_*(s,k)=H_*(s,i)$. In view of $T^k(s)\leq_{\varphi_{1+X}0}T^i(s)$ it suffices to establish~$T^j(t)<_{\varphi_{1+X}0}T^k(s)$. The point is that $H(T^k(s))=x$ yields $T^k(s)=\varphi_x T^{k+1}(s)$. We distinguish several cases: First assume that $T^j(t)$ is of the form
\begin{equation*}
T^j(t)=\varphi_{y_0}t_0+\dots+\varphi_{y_{m-1}}t_{m-1}.
\end{equation*}
Using the induction hypothesis, we get
\begin{equation*}
\varphi_{y_0}t_0\leq_{\varphi_{1+X}0} T^{j+1}(t)<_{\varphi_{1+X}0}T^k(s)=\varphi_x T^{k+1}(s).
\end{equation*}
Since the term on the right behaves like an additively principal ordinal (cf.~\cite[Section~2]{rathjen-weiermann-atr}), we obtain $T^j(t)<_{\varphi_{1+X}0}T^k(s)$, as needed. Now consider a term
\begin{equation*}
T^j(t)=\varphi_yT^{j+1}(t)\quad\text{with}\quad y=H(T^j(t))\leq_{2+X}H(t)=x.
\end{equation*}
For $y<_{\varphi_{1+X}0}x$ we observe $T^{j+1}(t)<_{\varphi_{1+X}0}\varphi_x T^{k+1}(s)$ as above. In the text before Remark~\ref{rmk:chi-semantics} we have explained that the order on $\varphi_{1+X}0$ is determined by a certain equivalence~(\ref{eq:ineq-Veblen}). The latter yields $T^j(t)=\varphi_yT^{j+1}(t)<_{\varphi_{1+X}0}\varphi_x T^{k+1}(s)=T^k(s)$, as needed. Finally, assume $y=x$. We then have $H(T^j(t))=x$ and hence
\begin{equation*}
H_*(t,j+1)<H_*(t,j)\leq H_*(s,k)-1=H_*(s,k+1).
\end{equation*}
By induction hypothesis we get $T^{j+1}(t)<_{\varphi_{1+X}0}T^{k+1}(s)$. In view of $y=x$, equivalence~(\ref{eq:ineq-Veblen}) yields $T^j(t)=\varphi_yT^{j+1}(t)<_{\varphi_{1+X}0}\varphi_x T^{k+1}(s)=T^k(s)$ once again.
\end{proof}

In order to describe the desired embedding of $\varphi_{1+X}0$ into $\widehat A((2+X)\times\mathbb N)$, we introduce some final notation: Given an order~$Y$, elements $y_0,\dots,y_{k-1}\in Y$, terms $s_0,\dots,s_{k-1}\in\widehat A(Y)$, and a further term $t=\chi^{t_{n-1}}_{z_{n-1}}\circ\dots\circ\chi^{t_0}_{z_0}(1)\in\widehat A(Y)$ of the indicated form (i.\,e.~with $t\neq 0$), we write
\begin{equation*}
\chi^{s_{k-1}}_{y_{k-1}}\circ\dots\circ\chi^{s_0}_{y_0}(t):=\chi^{s_{k-1}}_{y_{k-1}}\circ\dots\circ\chi^{s_0}_{y_0}\circ\chi^{t_{n-1}}_{z_{n-1}}\circ\dots\circ\chi^{t_0}_{z_0}(1).
\end{equation*}
To ensure that this expression is a term in $\widehat A(Y)$, one needs to check that we have $y_{k-1}<_Y\dots<_Yy_0<_Yz_{n-1}$ (where the last inequality is dropped in case $n=0$) and $s_i<_{\widehat A(Y)}\chi^{s_{i-1}}_{y_{i-1}}\circ\dots\circ\chi^{s_0}_{y_0}(t)$ for all $i<k$ (cf.~Definition~\ref{def:hat-A-objects}). If $s\neq 0$, then $R(s)$, $T(s)$ and $T_*(s)$ are proper subterms of~$s$, as observed before. Hence the following definition amounts to a recursion over terms. As part of Proposition~\ref{prop:o-into-A} below, we will verify that the recursion does indeed yield values in $\widehat A((2+X)\times\mathbb N)$.

\begin{definition}[$\rca_0$]\label{def:emb-phi-A}
The function $o:\varphi_{1+X}0\to\widehat A((2+X)\times\mathbb N)$ is given by the recursive clauses $o(0)=1$ and
\begin{equation*}
o(s)=\chi^{o(R(s))}_{H_0(s)}\circ\chi^{o(T(s))}_{H_1(s)}\circ\chi^0_{H_2(s)}(o(T_*(s)))\quad\text{for $s\neq 0$}.
\end{equation*}
\end{definition}

In the following we will often abbreviate $\chi^{o(R(s))}_{H_0(s)}\circ\chi^{o(T(s))}_{H_1(s)}\circ\chi^0_{H_2(s)}$ by $\chi^s$, so that the recursive clause for $s\neq 0$ can be written as $o(s)=\chi^s(o(T_*(s)))$. Let us observe that $s\neq 0$ entails $N(s)>0$ and $T_*(s)[i]=s[i]$ for $i\leq N(T_*(s))=N(s)-1$. Together with $s=s[N(s)]$, we recursively get
\begin{equation*}
o(s)=\chi^{s[N(s)]}\circ\dots\circ\chi^{s[1]}(1).
\end{equation*}
In particular, this does cover $o(0)=1$, since we have $N(0)=0$.

\begin{proposition}[$\rca_0$]\label{prop:o-into-A}
The recursion from Definition~\ref{def:emb-phi-A} yields an embedding of $\varphi_{1+X}0$ into $\widehat A((2+X)\times\mathbb N)$, for any linear order~$X$.
\end{proposition}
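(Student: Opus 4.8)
The plan is to run a single induction that simultaneously proves (a) that $o(s)$ is a genuine element of $\widehat A((2+X)\times\mathbb N)$, so that the recursion of Definition~\ref{def:emb-phi-A} makes sense, and (b) that $s<_{\varphi_{1+X}0}t$ entails $o(s)<_{\widehat A((2+X)\times\mathbb N)}o(t)$. Since both orders are linear when $X$ is, statement~(b) already shows that $o$ is an order embedding. As induction measure I would take the length function $L:\varphi_{1+X}0\to\mathbb N$ with $L(0)=0$, $L(\varphi_xs)=L(s)+1$ and $L(\varphi_{x_0}s_0+\dots+\varphi_{x_{n-1}}s_{n-1})=L(\varphi_{x_0}s_0)+\dots+L(\varphi_{x_{n-1}}s_{n-1})$; one checks at once that $L(R(s)),L(T(s)),L(T_*(s))<L(s)$ for $s\neq0$, the bound for $T_*(s)=T^{n(s)}(s)$ coming from the fact that $T(r)$ is a proper subterm of each $r\neq0$. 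The one computational tool used throughout is the unfolding $o(s)=\chi^{s[N(s)]}\circ\dots\circ\chi^{s[1]}(1)$ recorded just before the proposition, which exhibits the lexicographic comparison sequence of $o(s)$ as
\[
\langle\,(H_2(s[1]),0),\ (H_1(s[1]),o(T(s[1]))),\ (H_0(s[1]),o(R(s[1]))),\ (H_2(s[2]),0),\ \dots,\ (H_0(s),o(R(s)))\,\rangle,
\]
read from the innermost $\chi$ outward (recall $s[N(s)]=s$).

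For part~(a), fix $s\neq0$ and build $o(s)=\chi^{o(R(s))}_{H_0(s)}\circ\chi^{o(T(s))}_{H_1(s)}\circ\chi^0_{H_2(s)}(o(T_*(s)))$ from the inside out, with $o(T_*(s))$ already a valid term by the induction hypothesis. The index conditions of Definition~\ref{def:hat-A-objects} reduce to $H_0(s)<H_1(s)<H_2(s)$ (immediate in the second coordinate, using $H_*(s)>0$ for $s\neq0$ as observed after Definition~\ref{def:split-indices}) together with the conditions linking $\chi^s$ to $o(T_*(s))$, all of which follow from Lemma~\ref{lem:indices-monotone} via $H(s)=H(s[N(s)])<_{2+X}H(s[N(s)-1])$ (the case $N(s)=1$, i.e.\ $o(T_*(s))=1$, being trivial). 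There remain three exponent conditions. The condition $0<_{\widehat A}o(T_*(s))$ is trivial. The condition $o(T(s))<_{\widehat A}\chi^0_{H_2(s)}(o(T_*(s)))$ I would split according to Lemma~\ref{lem:star-extend}: if $N(T(s))=N(s)-1$ then $T(s)=T_*(s)$ (as one reads off its proof), so the right-hand term is obtained from $o(T(s))$ by one extra outer $\chi$ and $o(T(s))$ compares via a proper initial segment of it; if instead $N(T(s))\geq N(s)$ then, again by Lemma~\ref{lem:star-extend}, the two sequences agree on the common inner block $o(T_*(s))$ and then differ, $o(T(s))$ contributing $(H_2(T(s)[N(s)]),0)$ against $(H_2(s),0)$, with $H_2(T(s)[N(s)])<H_2(s)$ by Lemma~\ref{lem:subterms-descend}. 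Finally, $R(s)\leq_{\varphi_{1+X}0}T(s)$ by Definition~\ref{def:coeff-subterms}, so part~(b) of the induction (at strictly smaller length) gives $o(R(s))\leq_{\widehat A}o(T(s))$, and $o(T(s))$ is the outermost exponent of the valid term $\chi^{o(T(s))}_{H_1(s)}\circ\chi^0_{H_2(s)}(o(T_*(s)))$, hence strictly below it (an exponent in a $\widehat A$-term is, by Definition~\ref{def:hat-A-objects}, below the sub-term it governs, which compares via a proper initial segment and so lies below the whole term); this gives the last exponent condition.

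For part~(b), assume $s<_{\varphi_{1+X}0}t$; the case $s=0$ is immediate, since $o(0)=1$ compares via the empty sequence, so take $s,t\neq0$. I would compare the two sequences displayed above by following the recursive definition of $<_{\varphi_{1+X}0}$ from~\cite{rathjen-weiermann-atr}, which is governed by the Veblen equivalence~(\ref{eq:ineq-Veblen}) and by additive principality. When $s$ and $t$ share a $\varphi$-skeleton down to some level, the sequences agree on a corresponding initial segment and one descends, via the induction hypothesis, to compare $T_*(s)$ with $T_*(t)$, then $T(s)$ with $T(t)$, then $R(s)$ with $R(t)$; Lemma~\ref{lem:star-extend} guarantees that the inner blocks of $o(s)$ genuinely are those of $o(T_*(s))$, so these descents are faithful. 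The remaining cases produce a first discrepancy in the sequences: either a strict inequality between first coordinates, i.e.\ $H(\cdot)<_{2+X}H(\cdot)$ at the relevant position, coming directly from the appropriate clause of~(\ref{eq:ineq-Veblen}); or, in the delicate situation where $s$ and $t$ agree except in how many additive (or iterated-$\varphi$) parts of equal top index they carry, a strict inequality in the $\mathbb N$-coordinate of some $H_2$, which is exactly what Lemma~\ref{lem:H-star-monotone} delivers when applied with $T_*(s)=T_*(t)$ and $H(s)=H(t)$ (Lemma~\ref{lem:subterms-descend} mediating the interaction between passing to $T$ and this coordinate).

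I expect part~(b), and specifically the organisation of its case distinction, to be the main obstacle: the $\widehat A$-order is lexicographic from the innermost atom outward, whereas $<_{\varphi_{1+X}0}$ is naturally presented top-down, so each clause of~(\ref{eq:ineq-Veblen}) and each additive-principality rule has to be matched with the correct position of first discrepancy in the $s[\,\cdot\,]$-sequences, and one must check that the ``equal initial segment, then descend'' cases really reduce to instances of strictly smaller $L$. Lemmas~\ref{lem:star-extend}, \ref{lem:indices-monotone}, \ref{lem:subterms-descend} and~\ref{lem:H-star-monotone} are engineered precisely so that each of these matchings goes through, so the remaining work is bookkeeping rather than the discovery of new phenomena.
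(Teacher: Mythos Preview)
Your approach is essentially the same as the paper's: a simultaneous induction establishing well-formedness and monotonicity, with the same appeals to Lemmas~\ref{lem:star-extend}, \ref{lem:indices-monotone}, \ref{lem:subterms-descend}, and~\ref{lem:H-star-monotone} at the same points. The sketch of part~(b) is accurate in spirit, and you are right that the remaining work is case bookkeeping rather than new ideas.

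There is, however, one concrete technical slip: your length function does not support the induction as you have set it up. With $L(\varphi_{x_0}s_0+\dots+\varphi_{x_{n-1}}s_{n-1})=\sum_i L(\varphi_{x_i}s_i)=\sum_i L(s_i)+n$, consider a two-summand term $r=\varphi_{x_0}s_0+\varphi_{x_1}s_1$. Then $\{T(r),R(r)\}=\{\varphi_{x_0}s_0,\varphi_{x_1}s_1\}$ and $L(R(r))+L(T(r))=L(s_0)+L(s_1)+2=L(r)$, so you cannot invoke the monotonicity hypothesis~(b) on the pair $(R(r),T(r))$ to obtain $o(R(r))\leq o(T(r))$ in the well-formedness step~(a). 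The paper avoids this by setting $L(\varphi_{x_0}s_0+\dots+\varphi_{x_{n-1}}s_{n-1})=\sum_i L(s_i)+2n$ and explicitly flags that the factor~$2$ is what makes $L(R(r))+L(T(r))<L(r)$ go through. With that correction your plan matches the paper's proof.

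One minor remark on part~(b): Lemma~\ref{lem:subterms-descend} is used only in part~(a); in part~(b) the delicate equal-index subcases (both for $\varphi_x s'<\varphi_x t'$ and for sum-versus-sum) are handled by Lemma~\ref{lem:H-star-monotone} alone, once one has reduced to $T_*(s)=T_*(t)$ via the injectivity of $o$ on shorter terms.
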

\begin{proof}
As in previous arguments, we will need a suitable length function: Let us define $L:\varphi_{1+X}0\to\mathbb N$ by stipulating $L(0)=0$, $L(\varphi_xs)=L(s)+1$ and
\begin{equation*}
L(\varphi_{x_0}s_0+\dots+\varphi_{x_{n-1}}s_{n-1})=L(s_0)+\dots+L(s_{n-1})+2n
\end{equation*}
for $n\geq 2$. Note that this length function respects our notion of subterm: For $s\neq 0$ we can observe that $L(R(s))$, $L(T(s))$ and $L(T_*(s))$ are all strictly smaller than~$L(s)$. In view of $s[i]=T_*(s[i+1])$ we get $L(s[0])<\dots<L(s[N(s)])=L(s)$. We will simultaneously prove
\begin{align*}
r\in\varphi_{1+X}0\quad&\Rightarrow\quad o(r)\in\widehat A((2+X)\times\mathbb N),\\
s<_{\varphi_{1+X}0}t\quad&\Rightarrow\quad o(s)<_{\widehat A((2+X)\times\mathbb N)}o(t)
\end{align*}
by induction on $L(r)$ and $L(s)+L(t)$, respectively. To simplify notation, we will often omit subscripts of inequality signs. Concerning the first implication for $r\neq 0$, we consider
\begin{equation*}
o(r)=\chi^{o(R(r))}_{H_0(r)}\circ\chi^{o(T(r))}_{H_1(r)}\circ\chi^0_{H_2(r)}(o(T_*(r)))=\chi^{r[N(r)]}\circ\dots\circ\chi^{r[1]}(1).
\end{equation*}
The lower indices are strictly increasing by Lemma~\ref{lem:indices-monotone} and the paragraph after Definition~\ref{def:split-indices}. Using Lemma~\ref{lem:star-extend}, we get
\begin{equation*}
o(T(r))=\chi^{T(r)[N(T(r))]}\circ\dots\circ\chi^{T(r)[N(r)]}\circ\chi^{r[N(r)-1]}\circ\dots\circ\chi^{r[1]}(1),
\end{equation*}
where the first (resp.~second) part of the composition is omitted in case that we have $N(T(r))=N(r)-1$ (resp.~$N(r)=1$). We can deduce
\begin{equation*}
o(T(r))<\chi^0_{H_2(r)}(o(T_*(r)))=\chi^0_{H_2(r)}\circ\chi^{r[N(r)-1]}\circ\dots\circ\chi^{r[1]}(1)
\end{equation*}
as follows: If we have $N(T(r))=N(r)-1$, the expression on the right is a proper extension of the expression for $o(T(r))$ above. This yields the desired inequality, since terms in $\widehat A((2+X)\times\mathbb N)$ are compared lexicographically (cf.~Definition~\ref{def:hat-A-objects}). If we have $N(T(r))\geq N(r)$, our inequality follows from \mbox{$H_2(T(r)[N(r)])<_{(2+X)\times\mathbb N}H_2(r)$}, which holds by Lemma~\ref{lem:subterms-descend} (note that $r\neq 0$ entails $N(r)>0$). For future use we~record that we have, in particular, established
\begin{equation*}
o(T(r))<\chi^{o(R(r))}_{H_0(r)}\circ\chi^{o(T(r))}_{H_1(r)}\circ\chi^0_{H_2(r)}\circ\chi^{r[N(r)-1]}\circ\dots\circ\chi^{r[1]}(1)=o(r)
\end{equation*}
for $r\neq 0$. To conclude that we have $o(r)\in\widehat A((2+X)\times\mathbb N)$, it remains to show
\begin{equation*}
o(R(r))<_{\widehat A((2+X)\times\mathbb N)}\chi^{o(T(r))}_{H_1(r)}\circ\chi^0_{H_2(r)}(o(T_*(r))).
\end{equation*}
We have already shown that $o(T(r))$ is strictly smaller than the right side, so that the open inequality reduces to $o(R(r))\leq o(T(r))$. The latter follows from the simul\-ta\-neous induction hypothesis, as $R(r)\leq T(r)$ and $L(R(r))+L(T(r))<L(r)$ are readily verified (note that this depends on the factor~$2$ in the definition of~$L$). To prove the second part of the induction step, we assume $s<t$ and deduce~$o(s)<o(t)$. The latter is immediate if we have $s=0\neq t$. Now consider an inequality
\begin{equation*}
s=\varphi_xs'<_{\varphi_{1+X}0}\varphi_yt'=t.
\end{equation*}
Note that we have $T(s)=s'$ and $T(t)=t'$. As above, Lemma~\ref{lem:star-extend} yields
\begin{align*}
o(s')&=\chi^{s'[N(s')]}\circ\dots\circ\chi^{s'[N(s)]}\circ\chi^{s[N(s)-1]}\circ\dots\circ\chi^{s[1]}(1),\\
o(t')&=\chi^{t'[N(t')]}\circ\dots\circ\chi^{t'[N(t)]}\circ\chi^{t[N(t)-1]}\circ\dots\circ\chi^{t[1]}(1),
\end{align*}
where $N(s')=N(s)-1$ and $N(s)=1$ are possible (analogously for~$t$). Let us distinguish cases according to equivalence~(\ref{eq:ineq-Veblen}), which can be found in the text before Remark~\ref{rmk:chi-semantics} above. First assume that we have $x>_{1+X}y$ and $s<t'$. The latter entails $o(s)<o(t')$ by induction hypothesis. In the first part of the induction step we have already established $o(t')=o(T(t))<o(t)$. We thus get $o(s)<o(t)$ by transitivity. Now assume $x<_{1+X}y$ and $s'<_{\varphi_{1+X}0}t$. By induction hypothesis we get $o(s')<o(t)$, hence in particular
\begin{equation*}
\chi^{s[N(s)-1]}\circ\dots\circ\chi^{s[1]}(1)<_{\widehat A((2+X)\times\mathbb N}o(t)=\chi^{t[N(t)]}\circ\dots\circ\chi^{t[1]}(1).
\end{equation*}
We immediately obtain $o(s)=\chi^{s[N(s)]}\circ\dots\circ\chi^{s[1]}(1)<o(t)$, unless the right side of the previous inequality is an end extension of the left side. More precisely, it remains to consider the case where we have $N(s)-1<N(t)$ and
\begin{equation}\label{eq:prev-equal}\tag{$\dag$}
\chi^{s[N(s)-1]}\circ\dots\circ\chi^{s[1]}(1)=\chi^{t[N(s)-1]}\circ\dots\circ\chi^{t[1]}(1).
\end{equation}
In this case we observe
\begin{equation*}
H(s)=x<_{2+X}y=H(t)\leq_{2+X}H(t[N(s)]),
\end{equation*}
where the last inequality relies on Lemma~\ref{lem:indices-monotone}. As desired, we can conclude
\begin{multline*}
o(s)=\chi^{o(R(s))}_{H_0(s)}\circ\chi^{o(T(s))}_{H_1(s)}\circ\chi^0_{(H(s),H_*(s))}\circ\chi^{s[N(s)-1]}\circ\dots\circ\chi^{s[1]}(1)<\\
\chi^0_{(H(t[N(s)]),H_*(t[N(s)]))}\circ\chi^{t[N(s)-1]}\circ\dots\circ\chi^{t[1]}(1)<o(t).
\end{multline*}
Finally, assume that we have $x=y$ and $s'<t'$. We first infer $o(s')<o(t')<o(t)$. As above, we get $o(s)<o(t)$ unless we have $N(s)-1<N(t)$ and equation~(\ref{eq:prev-equal}) holds. If we have $N(s)<N(t)$, then we obtain
\begin{equation*}
H(s)=x=y=H(t)<_{2+X}H(t[N(s)]),
\end{equation*}
where the inequality from Lemma~\ref{lem:indices-monotone} is now strict. We then get \mbox{$o(s)<o(t)$} as above. It remains to consider the case where we have $N(s)=N(t)$. As preparation for this case, we recall that any term $r\in\widehat A((2+X)\times\mathbb N)$ is determined by the tuple $(H(r),T(r),R(r))$, as noted before Definition~\ref{def:coeff-subterms} above. Recursively, this allows us to recover~$r$ from $o(r)$. From equation~(\ref{eq:prev-equal}) and $N(s)=N(t)$ we thus get
\begin{equation*}
T_*(s)=s[N(s)-1]=t[N(s)-1]=t[N(t)-1]=T_*(t).
\end{equation*}
By Lemma~\ref{lem:H-star-monotone} we obtain~$H_*(s)\leq H_*(t)$. Together with the inequality $o(s')<o(t')$ from the induction hypothesis, we can finally infer
\begin{multline*}
o(s)=\chi^1_{(H(s),0)}\circ\chi^{o(s')}_{(H(s),1)}\circ\chi^0_{(H(s),1+H_*(s))}(o(T_*(s)))<\\
\chi^1_{(H(t),0)}\circ\chi^{o(t')}_{(H(t),1)}\circ\chi^0_{(H(t),1+H_*(t))}(o(T_*(t)))=o(t).
\end{multline*}
In the case of an inequality
\begin{equation*}
s=\varphi_{x_0}s_0+\dots+\varphi_{x_{m-1}}s_{m-1}<_{\varphi_{1+X}0}\varphi_yt'=t
\end{equation*}
we have $H(s)=-1<_{2+X}y=H(t)$. Due to this fact, the argument is similar to the one for $s=\varphi_xs'<\varphi_yt'=t$ with $x<_{1+X}y$. Now consider an inequality
\begin{equation*}
s=\varphi_xs'<_{\varphi_{1+X}0}\varphi_{y_0}t_0+\dots+\varphi_{y_{n-1}}t_{n-1}=t.
\end{equation*}
We then have $s\leq\varphi_{x_0}y_0\leq T(t)$ (cf.~\cite[Section~2]{rathjen-weiermann-atr}). By induction hypothesis and the above, we get $o(s)\leq o(T(t))<o(t)$. Finally, we look at
\begin{equation*}
s=\varphi_{x_0}s_0+\dots+\varphi_{x_{m-1}}s_{m-1}<_{\varphi_{1+X}0}\varphi_{y_0}t_0+\dots+\varphi_{y_{n-1}}t_{n-1}=t.
\end{equation*}
One readily verifies $T(s)\leq T(t)$. As in the argument for $s=\varphi_xs'<\varphi_xt'=t$ (where $T(s)=s'$ and $T(t)=t'$), we can now reduce to the case of $T_*(s)=T_*(t)$. In view of $H(s)=-1=H(t)$, we get $H_*(s)\leq H_*(t)$ by Lemma~\ref{lem:H-star-monotone}. It is straightforward to show that $T(s)=T(t)$ entails $R(s)<R(t)$. The induction hypothesis yields~$o(T(s))\leq o(T(t))$, and in case of equality we get~\mbox{$o(R(s))<o(R(t))$}. Similarly to the above, a lexicographic comparison in $\widehat A((2+X)\times\mathbb N)$ does now yield
\begin{multline*}
o(s)=\chi^{o(R(s))}_{(H(s),0)}\circ\chi^{o(T(s))}_{(H(s),1)}\circ\chi^0_{(H(s),H_*(s))}(o(T_*(s)))<_{\widehat A((2+X)\times\mathbb N)}\\
\chi^{o(R(t))}_{(H(t),0)}\circ\chi^{o(T(t))}_{(H(t),1)}\circ\chi^0_{(H(t),H_*(t))}(o(T_*(t)))=o(t),
\end{multline*}
which completes the step of our simultaneous induction.
\end{proof}

The following result was already stated in the introduction. Let us point out that the strengthened base theory $\rca_0+\isigma_2$ is only needed to handle the functor~$\overline A$, which is based on the Ackermann function. The corresponding result with $\widehat A$ at the place of $\overline A$ holds over $\rca_0$, as the proof will reveal.

\begin{theorem}[$\rca_0+\isigma_2$]\label{thm:Ackermann-Veblen}
For any linear order~$X$, there is an order embedding of $\overline A(X)$ into $\varphi_{1+X}0$ and an order embedding of $\varphi_{1+X}0$ into $\overline A((2+X)\times\mathbb N)$.
\end{theorem}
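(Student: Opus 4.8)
The plan is to assemble both embeddings by composing the three results already established: the natural isomorphism $\widehat A\cong\overline A$ of Proposition~\ref{prop:iso-A-hat-bar}, the order embedding $\widehat A(X)\hookrightarrow\varphi_{1+X}0$ of Proposition~\ref{prop:A-into-phi}, and the order embedding $\varphi_{1+X}0\hookrightarrow\widehat A((2+X)\times\mathbb N)$ of Proposition~\ref{prop:o-into-A}. Since a composite of order embeddings is again an order embedding, and an order isomorphism is an order embedding in either direction, no new construction is required at this point.

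For the first inequality $\overline A(X)\preceq\varphi_{1+X}0$, I would take the component at $X$ of the inverse of the natural isomorphism from Proposition~\ref{prop:iso-A-hat-bar}, obtaining an order isomorphism $\overline A(X)\cong\widehat A(X)$, and post-compose it with the order embedding $o\colon\widehat A(X)\to\varphi_{1+X}0$ from Proposition~\ref{prop:A-into-phi}. This yields the desired embedding of $\overline A(X)$ into $\varphi_{1+X}0$.

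For the second inequality $\varphi_{1+X}0\preceq\overline A((2+X)\times\mathbb N)$, I would start from the order embedding $o\colon\varphi_{1+X}0\to\widehat A((2+X)\times\mathbb N)$ of Proposition~\ref{prop:o-into-A} (the map of Definition~\ref{def:emb-phi-A}) and post-compose it with the component at $(2+X)\times\mathbb N$ of the natural isomorphism $\widehat A\cong\overline A$ from Proposition~\ref{prop:iso-A-hat-bar}, obtaining an order embedding of $\varphi_{1+X}0$ into $\overline A((2+X)\times\mathbb N)$, as required.

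Concerning the base theory, the two embeddings through $\widehat A$ (Propositions~\ref{prop:A-into-phi} and~\ref{prop:o-into-A}) are provable in $\rca_0$, since $\widehat A$ is built from term representations alone; the only step that invokes $\isigma_2$ is the passage from $\widehat A$ to $\overline A$, namely Proposition~\ref{prop:iso-A-hat-bar}, which is needed because $\overline A$ is defined via the actual numerical values of the fast growing hierarchy $F_b$. Hence $\rca_0+\isigma_2$ suffices for the theorem as stated, while the variant with $\widehat A$ in place of $\overline A$ holds already in $\rca_0$. I do not expect any remaining obstacle: the substance of the argument lies in Proposition~\ref{prop:o-into-A}, whose construction of the embedding $o$ of $\varphi_{1+X}0$ into $\widehat A((2+X)\times\mathbb N)$ requires the full apparatus of the subterm operations $T$, $T_*$, $H$, $H_*$, $N$ and the index sequence $s[i]$, together with Lemmas~\ref{lem:star-extend}--\ref{lem:H-star-monotone}, as well as in the identification $\widehat A\cong\overline A$; with those in hand the theorem is a purely formal composition.
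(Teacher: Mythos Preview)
Your proposal is correct and follows exactly the same approach as the paper: invoke Proposition~\ref{prop:iso-A-hat-bar} to pass between $\overline A$ and $\widehat A$, then apply Propositions~\ref{prop:A-into-phi} and~\ref{prop:o-into-A} for the two embeddings. Your remark on the base theory also matches the paper's observation that only the isomorphism $\widehat A\cong\overline A$ requires $\isigma_2$.
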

\begin{proof}
Due to Proposition~\ref{prop:iso-A-hat-bar}, we may replace $\overline A(Y)$ by the isomorphic order $\widehat A(Y)$, for any order~$Y$. Proposition~\ref{prop:A-into-phi} provides an embedding of $\widehat A(X)$ into~$\varphi_{1+X}0$. The latter can be embedded into $\widehat A((2+X)\times\mathbb N)$, by Proposition~\ref{prop:o-into-A}.
\end{proof}

Putting things together, we deduce the remaining claim from the introduction:

\begin{theorem}\label{thm:Goodstein-ATR}
The following are equivalent over~$\rca_0+\isigma_2$:
\begin{enumerate}[label=(\roman*)]
\item arithmetical transfinite recursion,
\item when $X$ is a well order, then so is $\overline A(X)$ (where $\overline A:\lo\to\lo$ extends the Goodstein dilator~$A:\nat\to\lo$ based on the Ackermann function),
\item the extended Goodstein theorem for the Goodstein dilator~$A$: for any Goodstein system $(b,c)$ and any $m\in A(b(0))$ there is an $i\in\mathbb N$ with $G_{b,c,m}^A(i)=0$.
\end{enumerate}
\end{theorem}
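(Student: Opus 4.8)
The plan is to combine three earlier ingredients: Theorem~\ref{thm:Goodstein-dilator} applied to the Goodstein dilator $A$, the two embeddings from Theorem~\ref{thm:Ackermann-Veblen}, and Friedman's theorem that arithmetical transfinite recursion is equivalent (over $\rca_0$) to the statement that $X\mapsto\varphi_{1+X}0$ preserves well foundedness (see the introduction and \cite{rathjen-weiermann-atr,marcone-montalban}). First I would observe that the equivalence (ii)$\,\Leftrightarrow\,$(iii) is nothing but Theorem~\ref{thm:Goodstein-dilator} instantiated at $D=A$; note that $A$ is indeed a Goodstein dilator by Proposition~\ref{prop:Ack-Goodstein-dil}, which is where the induction principle $\isigma_2$ enters (it is needed to know that the functions $F_b$, and hence $A$ and $\overline A$, are total). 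So it remains to establish (i)$\,\Leftrightarrow\,$(ii) over $\rca_0+\isigma_2$.

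For (i)$\,\Rightarrow\,$(ii): assuming arithmetical transfinite recursion, Friedman's theorem gives that $\varphi_{1+X}0$ is well founded whenever $X$ is a well order. Fix such an $X$. By the first inequality of Theorem~\ref{thm:Ackermann-Veblen} there is an order embedding of $\overline A(X)$ into $\varphi_{1+X}0$. Since $\rca_0$ proves that an order which embeds into a well founded order is itself well founded (an infinite $<_{\overline A(X)}$-descending sequence would map to an infinite descending sequence in $\varphi_{1+X}0$), we conclude that $\overline A(X)$ is well founded, which is (ii).

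For (ii)$\,\Rightarrow\,$(i): assume (ii) and let $X$ be a well order. Then $2+X$ is again a well order (adjoining two bottom elements preserves well foundedness, provably in $\rca_0$), and $(2+X)\times\mathbb N$ is a well order as well, since $\rca_0$ proves that the product of two well orders, ordered as in the paragraph before Theorem~\ref{thm:Ackermann-Veblen}, is a well order. By (ii) the order $\overline A((2+X)\times\mathbb N)$ is well founded, and by the second inequality of Theorem~\ref{thm:Ackermann-Veblen} there is an embedding of $\varphi_{1+X}0$ into $\overline A((2+X)\times\mathbb N)$; as before this forces $\varphi_{1+X}0$ to be well founded. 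Hence $X\mapsto\varphi_{1+X}0$ preserves well foundedness, and Friedman's theorem yields arithmetical transfinite recursion.

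The argument is essentially bookkeeping, so there is no real obstacle; the only points that need a little care are purely foundational. One must check that the transfer of well foundedness along the embeddings $\preceq$ of Theorem~\ref{thm:Ackermann-Veblen}, and the closure of well orders under $X\mapsto 2+X$ and $X\mapsto X\times\mathbb N$, all go through in the weak base theory $\rca_0$ (they do --- these are standard facts at the $\Pi^1_1$ level). One should also be explicit that the use of $\isigma_2$ is confined to invoking Proposition~\ref{prop:Ack-Goodstein-dil} and the results about $\overline A$ that depend on it, and that the version of Friedman's theorem being cited is the one proved over $\rca_0$ in \cite{rathjen-weiermann-atr,marcone-montalban}.
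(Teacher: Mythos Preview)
Your proposal is correct and follows essentially the same route as the paper: the equivalence (ii)$\Leftrightarrow$(iii) is Theorem~\ref{thm:Goodstein-dilator} for $D=A$ (with Proposition~\ref{prop:Ack-Goodstein-dil} supplying that $A$ is a Goodstein dilator), and the equivalence (i)$\Leftrightarrow$(ii) is obtained from Friedman's theorem via the two embeddings of Theorem~\ref{thm:Ackermann-Veblen}, using that well foundedness transfers along embeddings and is preserved under $X\mapsto(2+X)\times\mathbb N$. The paper phrases the latter by introducing an auxiliary statement ``$\varphi_{1+X}0$ is a well order whenever $X$ is'' and showing it equivalent to~(ii), but the underlying argument is identical to yours.
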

\begin{proof}
We first justify an implicit claim: Proposition~\ref{prop:Ack-Goodstein-dil} tells us that $A$ is indeed a Goodstein dilator. The equivalence between~(ii) and~(iii) holds by Theorem~\ref{thm:Goodstein-dilator}. Let us also consider the following statement:
\begin{enumerate}[label=(\roman*)]\setcounter{enumi}{3}
\item when $X$ is a well order, then so is $\varphi_{1+X}0$.
\end{enumerate}
Harvey Friedman has shown that~(i) and~(iv) are equivalent (see~\cite{rathjen-weiermann-atr,marcone-montalban} for published proofs). So it remains to establish an equivalence between~(ii) and~(iv). To avoid confusion, we recall that the two obvious definitions of well order (in terms of descending sequences and minimal elements) are equivalent over~$\rca_0$ (see e.\,g.~\cite[Lemma~2.3.12]{freund-thesis} for a detailed proof). Let us now assume~(ii) and deduce~(iv). Aiming at the latter, we consider an arbitrary well order~$X$. It is straightforward to show that $(2+X)\times\mathbb N$ is also a well order. By~(ii) we can infer that $\overline A((2+X)\times\mathbb N)$ has the same property. Due to the embedding from Theorem~\ref{thm:Ackermann-Veblen}, any descending sequence in~$\varphi_{1+X}0$ could be transformed into one in~$\overline A((2+X)\times\mathbb N)$. Hence $\varphi_{1+X}0$ must be well founded, as needed for~(iv). To show that~(iv) implies~(ii) one argues similarly, using the fact that~$\overline A(X)$ embeds into~$\varphi_{1+X}0$, also by Theorem~\ref{thm:Ackermann-Veblen}.
\end{proof}

\appendix

\section{Preservation of pullbacks in terms of support functions}\label{appendix:pullback-support}

In this appendix we provide a proof of Proposition~\ref{prop:pullback-nat-transf} from the main text (reproduced below). The result is essentially implicit in~\cite[Theorem~2.3.12]{girard-pi2} (see also~\cite[Remark~2.2.2]{freund-thesis}, which is closer to our notation). However, the setting is somewhat different: In the given references, one considers functors on arbitrary linear orders, and the support functions ensure preservation of pullbacks as well as direct limit. Our Goodstein dilators only take finite orders as input, so that limits are trivial. The proposition confirms that the condition on support functions remains the same.

{
\def\thetheorem{\ref{prop:pullback-nat-transf}}
\addtocounter{theorem}{-1}
\begin{proposition}[$\rca_0$]
The following are equivalent for $D:\nat\to\lo$:
\begin{enumerate}[label=(\roman*)]
\item the functor~$D$ preserves pullbacks,
\item there is a natural transformation $\supp:D\Rightarrow[\cdot]^{<\omega}$ such that
\begin{equation*}
\supp_n(\sigma)\subseteq\rng(f)\quad\Rightarrow\quad\sigma\in\rng(D(f))
\end{equation*}
holds for any morphism $f:m\to n$ and any $\sigma\in D(n)$.
\end{enumerate}
If a natural transformation as in~(ii) exists, then it is unique.
\end{proposition}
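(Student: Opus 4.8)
The plan is to prove the two implications and the uniqueness claim separately, the decisive fact being that inside $\nat$ the pullback of two embeddings into $n$ is the embedding of the intersection of their ranges: concretely, the pullback of the cospan $|a|\xrightarrow{e^n_a}n\xleftarrow{e^n_b}|b|$ (with $e^n_a$ the increasing enumeration of $a\subseteq n$, as in Proposition~\ref{prop:dil-extend}) is $|a\cap b|$, the legs enumerating $a\cap b$ inside $a$ resp.\ inside $b$; and more generally the pullback of $f\colon m\to n$ and $e^n_c$ with $c\subseteq\rng(f)$ is $|f^{-1}(c)|$, one leg being $e^m_{f^{-1}(c)}$ and the other an isomorphism onto $|c|$. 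We also use throughout that $D(f)$ is an order embedding, hence injective. For \textbf{(i) $\Rightarrow$ (ii)}, fix $n$ and $\sigma\in D(n)$ and consider $\mathcal F_\sigma=\{a\subseteq n:\sigma\in\rng(D(e^n_a))\}$, which contains $n$. If $a,b\in\mathcal F_\sigma$ then pullback preservation applied to the cospan above lets the common value $\sigma=D(e^n_a)(\alpha)=D(e^n_b)(\beta)$ lift along the legs to some $\gamma\in D(|a\cap b|)$, so $\sigma=D(e^n_{a\cap b})(\gamma)$ and $a\cap b\in\mathcal F_\sigma$. Thus $\mathcal F_\sigma$ is closed under intersection and, $n$ being finite, has a least element; declare $\supp_n(\sigma)$ to be it. Equivalently $b\in\supp_n(\sigma)$ iff $\sigma\notin\rng(D(e^n_{n\setminus\{b\}}))$; the right side is $\Pi^0_1$ and its negation $\Sigma^0_1$, so this graph is $\Delta^0_1$ and $\supp$ is available by $\Delta^0_1$-comprehension. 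This is the one place where one must be mildly careful to stay inside $\rca_0$.

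Still assuming (i), the implication in (ii) is immediate: if $\supp_n(\sigma)=a\subseteq\rng(f)$ then $e^n_a$ factors as $f\circ g$ for a morphism $g$ of $\nat$, hence $\sigma\in\rng(D(e^n_a))\subseteq\rng(D(f))$. For naturality one proves $\supp_n(D(f)(\sigma))=[f]^{<\omega}(\supp_m(\sigma))$ for $f\colon m\to n$ and $\sigma\in D(m)$. Writing $a=\supp_m(\sigma)$ and choosing $\sigma'$ with $D(e^m_a)(\sigma')=\sigma$, the composite $f\circ e^m_a$ enumerates $[f]^{<\omega}(a)$, which yields the inclusion $\subseteq$. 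For $\supseteq$, set $c=\supp_n(D(f)(\sigma))$; since $c\subseteq[f]^{<\omega}(a)\subseteq\rng(f)$, the range condition gives $D(f)(\sigma)\in\rng(D(e^n_c))$, and pullback preservation applied to $f$ and $e^n_c$ lifts $\sigma$ along $D(e^m_{f^{-1}(c)})$, so $a=\supp_m(\sigma)\subseteq f^{-1}(c)$ and hence $[f]^{<\omega}(a)\subseteq c$. Uniqueness now follows by the same device: if $\supp'$ also satisfies (ii), its range condition applied to $e^n_a$ with $a=\supp'_n(\sigma)$ gives $\sigma\in\rng(D(e^n_a))$, and then naturality of $\supp$ gives $\supp_n(\sigma)\subseteq\rng(e^n_a)=a=\supp'_n(\sigma)$; symmetry finishes it.

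For \textbf{(ii) $\Rightarrow$ (i)}, let $m\xrightarrow{f}n\xleftarrow{g}k$ be a cospan in $\nat$ with pullback $p\colon P\to m$, $q\colon P\to k$, so $\rng(f\circ p)=\rng(f)\cap\rng(g)$. Given $\sigma\in D(m)$ and $\tau\in D(k)$ with $D(f)(\sigma)=D(g)(\tau)=:\theta$, naturality gives $\supp_n(\theta)=[f]^{<\omega}(\supp_m(\sigma))\subseteq\rng(f)$ and likewise $\supp_n(\theta)\subseteq\rng(g)$, hence $\supp_n(\theta)\subseteq\rng(f\circ p)$. The support condition, applied to the morphism $f\circ p\colon P\to n$, yields $\rho\in D(P)$ with $D(f)(D(p)(\rho))=\theta$; injectivity of $D(f)$, and of $D(g)$ together with $f\circ p=g\circ q$, then forces $D(p)(\rho)=\sigma$ and $D(q)(\rho)=\tau$, and $\rho$ is unique since $D(p)$ is injective. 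So $D$ sends the pullback square to a pullback square, as required. The genuinely non-formal step in the whole argument is the reverse inclusion in the naturality proof for (i) $\Rightarrow$ (ii): it is what prevents $\supp_n(\sigma)$ from being too small, and the only use of pullback preservation that is not a mere closure property. Everything else is bookkeeping with the enumerations $e^n_a$, the induced maps $|f|$, and the identities $|[f]^{<\omega}(a)|=|a|$, together with the $\Delta^0_1$-comprehension point noted above.
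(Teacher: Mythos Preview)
Your argument follows the same strategy as the paper's: your filter $\mathcal F_\sigma$ with its least element is a repackaging of the paper's minimal $n_\sigma$ with unique $e_\sigma$ (the paper proves uniqueness of $e_\sigma$ via a pullback, you prove closure of $\mathcal F_\sigma$ under intersection---these are the same computation), and the direction (ii)$\Rightarrow$(i) is identical. Your uniqueness argument via symmetry is slightly slicker than the paper's direct computation.

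There is, however, one genuine slip. You claim that since ``the right side is $\Pi^0_1$ and its negation $\Sigma^0_1$'', the relation $b\in\supp_n(\sigma)$ is $\Delta^0_1$. But every $\Pi^0_1$ formula has $\Sigma^0_1$ negation; this is not what $\Delta^0_1$ means (that would require the formula \emph{itself} to also be equivalent to a $\Sigma^0_1$ formula). For an arbitrary functor $D:\nat\to\lo$ the condition $\sigma\in\rng(D(e^n_a))$ involves an unbounded search over $D(|a|)$, and your argument does not show that $\supp$ exists in $\rca_0$. The fix---which the paper also leaves tacit---is that in every intended application the values $D(b)$ are initial segments of~$\mathbb N$ under the usual order (clause~(i) of Definition~\ref{def:Goodstein-dilator}); then $D(e^n_a)$ is a strictly increasing self-map of~$\mathbb N$, so $D(e^n_a)(\tau)\geq\tau$, the search is bounded by~$\sigma$, and the whole condition is $\Delta^0_0$.
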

}
\begin{proof}
Let us first assume~(i) and deduce~(ii): Given $\sigma\in D(n)$, let $n_\sigma\leq n$ be minimal such that $\sigma\in\rng(D(e_\sigma))$ holds for some morphism $e_\sigma:n_\sigma\to n$ (where existence is witnessed by the identity on $n_\sigma=n$). To show that $e_\sigma$ is uniquely determined, we consider a competitor $e_\sigma':n_\sigma\to n$ with $\sigma\in\rng(D(e_\sigma'))$. Consider the pullback diagram
\begin{equation*}
\begin{tikzcd}[row sep=large]
m\ar[r,"f"]\ar[d,"f'",swap]\arrow[rd, phantom,"\scalebox{1.5}{$\lrcorner$}", very near start] & n_\sigma\ar[d,"e_\sigma"]\\
n_\sigma\ar[r,"e_\sigma'",swap] & n
\end{tikzcd}
\end{equation*}
in the category $\nat$, where $m=\{0,\dots,m-1\}$ is isomorphic to $\rng(e_\sigma)\cap\rng(e_\sigma')$. Note that the embedding $D(e_\sigma):D(n_\sigma)\to D(n)$ is an isomorphism onto its range. We thus get $g$ and $g'$ as in the following diagram, where  $D(e_\sigma)\circ g$ and $D(e_\sigma')\circ g'$ are the inclusion: 
\begin{equation*}
\begin{tikzcd}[row sep=large]
\rng(D(e_\sigma))\cap\rng(D(e_\sigma'))\ar[rd,"h",dashed]\ar[rrd,"g",bend left=30]\ar[rdd,"g'",swap,bend right=30] & & \\
& D(m)\ar[r,"D(f)"]\ar[d,"D(f')",swap]\arrow[rd, phantom,"\scalebox{1.5}{$\lrcorner$}", very near start] & D(n_\sigma)\ar[d,"D(e_\sigma)"]\\
& D(n_\sigma)\ar[r,"D(e_\sigma')",swap] & D(n)
\end{tikzcd}
\end{equation*}
The morphism~$h$ exists since $D$ preserves pullbacks by~(i). We now obtain
\begin{equation*}
\sigma=D(e_\sigma)\circ g(\sigma)=D(e_\sigma)\circ D(f)\circ h(\sigma)\in\rng(D(e_\sigma\circ f)).
\end{equation*}
By the minimality of $n_\sigma$, this forces~$m=n_\sigma$. It follows that $f=f'$ is the identity (note that the morphisms $f,f'$ must be strictly increasing). Since the diagram above commutes, this entails $e_\sigma=e_\sigma'$, as desired. We now show that~(ii) holds for
\begin{equation*}
\supp_n(\sigma):=\rng(e_\sigma).
\end{equation*}
If $\supp_n(\sigma)\subseteq\rng(f)$ holds for $f:m\to n$, we get $e_\sigma=f\circ e$ for some $e:n_\sigma\to m$. This yields $\sigma\in\rng(D(e_\sigma))\subseteq\rng(D(f))$, as required. It remains to show that the family of functions $\supp_n:D(n)\to[n]^{<\omega}$ is natural. For this purpose, we consider an element $\sigma\in D(n)$ and a morphism $g:n\to k$. Let us write
\begin{align*}
\sigma&=D(e_\sigma)(\sigma_0) && \text{with }e_\sigma:n_\sigma\to n,\\
D(g)(\sigma)&=D(e_{D(g)(\sigma)})(\sigma_1) && \text{with }e_{D(g)(\sigma)}:k_{D(g)(\sigma)}\to k,
\end{align*}
where $n_\sigma$ and $k_{D(g)(\sigma)}$ are as small as possible. Now consider the pullback
\begin{equation*}
\begin{tikzcd}[row sep=large]
m\ar[r,"f"]\ar[d,"f'",swap]\arrow[rd, phantom,"\scalebox{1.5}{$\lrcorner$}", very near start] & k_{D(g)(\sigma)}\ar[d,"e_{D(g)(\sigma)}"]\\
n_\sigma\ar[r,"g\circ e_\sigma",swap] & k
\end{tikzcd}
\end{equation*}
in the category~$\nat$. Similarly to the above, the minimality of $k_{D(g)(\sigma)}$ entails that $f$ must be the identity. Since the diagram commutes, we get $e_{D(g)(\sigma)}=g\circ e_\sigma\circ f'$ and thus
\begin{equation*}
D(g\circ e_\sigma\circ f')(\sigma_1)=D(e_{D(g)(\sigma)})(\sigma_1)=D(g)(\sigma),
\end{equation*}
which allows us to conclude $D(e_\sigma\circ f')(\sigma_1)=\sigma$ (note that the morphism $D(g)$ is an embedding and hence injective). Now the minimality of $n_\sigma$ ensures that $f'$ is the identity as well (on $n_\sigma=m=k_{D(g)(\sigma)}$). We thus have $e_{D(g)(\sigma)}=g\circ e_\sigma$ and hence
\begin{equation*}
\supp_k\circ D(g)(\sigma)=\rng(e_{D(g)(\sigma)})=\rng(g\circ e_\sigma)=[g]^{<\omega}(\rng(e_\sigma))=[g]^{<\omega}\circ\supp_n(\sigma),
\end{equation*}
as needed for naturality. To see that the sets $\supp_n(\sigma)$ are uniquely determined, we again write $\sigma=D(e_\sigma)(\sigma_0)$ with $e_\sigma:n_\sigma\to n$, where $n_\sigma$ is as small as possible. Due to the latter, we have $\sigma_0\notin\rng(D(f))$ for any $f:m\to n_\sigma$ with $m<n_\sigma$. Hence the implication in~(ii) can only hold if we have $\supp_{n_\sigma}(\sigma_0)=n_\sigma=\{0,\dots,n_\sigma-1\}$. Together with naturality, this forces
\begin{equation*}
\supp_n(\sigma)=\supp_n\circ D(e_\sigma)(\sigma_0)=[e_\sigma]^{<\omega}\circ\supp_{n_\sigma}(\sigma_0)=[e_\sigma]^{<\omega}(n_\sigma)=\rng(e_\sigma).
\end{equation*}
Let us now assume~(ii) and deduce~(i): Consider a pullback
\begin{equation*}
\begin{tikzcd}[row sep=large]
l\ar[r,"f"]\ar[d,"f'",swap]\arrow[rd, phantom,"\scalebox{1.5}{$\lrcorner$}", very near start] & m\ar[d,"e"]\\
m'\ar[r,"e'",swap] & n
\end{tikzcd}
\end{equation*}
in the category~$\nat$. We need to show that any commutative diagram of the form
\begin{equation*}
\begin{tikzcd}[row sep=large]
X\ar[rrd,"g",bend left=30]\ar[rdd,"g'",swap,bend right=30] & & \\
& D(l)\ar[r,"D(f)"]\ar[d,"D(f')",swap] & D(m)\ar[d,"D(e)"]\\
& D(m')\ar[r,"D(e')",swap] & D(n)
\end{tikzcd}
\end{equation*}
can be completed by a unique morphism $h:X\to D(l)$. Note that it is enough to construct $h$ as a function: the latter will automatically be an order embedding, since the same holds for the morphisms $D(f)$ and $g$. Also, uniqueness is immediate since the embedding $D(f)$ is injective. In order to find the required value $h(x)\in D(l)$ for $x\in X$, we observe
\begin{equation*}
\supp_n(D(e)\circ g(x))=[e]^{<\omega}\circ\supp_m(g(x))\subseteq\rng(e)
\end{equation*}
and similarly $\supp_n(D(e)\circ g(x))\subseteq\rng(e')$. The given pullback in~$\nat$ yields a function $h_0$ as in the following diagram, where $j$ and $j'$ are defined by the condition that $\rng(e\circ j)=\rng(e'\circ j')$ is equal to the set $\rng(e)\cap\rng(e')$ of cardinality~$k$:
\begin{equation*}
\begin{tikzcd}[row sep=large]
k\ar[rd,"h_0",dashed]\ar[rrd,"j",bend left=30]\ar[rdd,"j'",swap,bend right=30] & & \\
& l\ar[r,"f"]\ar[d,"f'",swap]\arrow[rd, phantom,"\scalebox{1.5}{$\lrcorner$}", very near start] & m\ar[d,"e"]\\
& m'\ar[r,"e'",swap] & n
\end{tikzcd}
\end{equation*}
We can now observe
\begin{equation*}
\supp_n(D(e)\circ g(x))\subseteq\rng(e)\cap\rng(e')=\rng(e\circ j)=\rng(e\circ f\circ h_0)\subseteq\rng(e\circ f).
\end{equation*}
By the implication in~(ii), we obtain a (necessarily unique) element $\sigma\in D(l)$ with $D(e)\circ g(x)=D(e\circ f)(\sigma)$. The latter entails $g(x)=D(f)(\sigma)$, so that $h(x)=\sigma$ is the required value.
\end{proof}

\bibliographystyle{amsplain}
\bibliography{Ackermann-Goodstein-functorial}

\end{document}